\author{Stefano Scrobogna \thanks{This research is supported by the Basque Government through the BERC 2018-2021 program and by Spanish Ministry of Economy and Competitiveness MINECO through BCAM Severo Ochoa excellence accreditation SEV-2017-0718 and through project MTM2017-82184-R funded by (AEI/FEDER, UE) and acronym "DESFLU".}}
\title{Zero limit of entropic relaxation time for  the Shliomis model of ferrofluids}
\DeclareMathAlphabet{\mathcal}{OMS}{cmsy}{m}{n}
\DeclareSymbolFontAlphabet{\mathbb}{AMSb}
\DeclareSymbolFontAlphabet{\mathbbl}{bbold}
\renewcommand{\d}{\textnormal{d}}
\renewcommand{\div}{\textnormal{div}}
\newcommand{\fine}{\hfill$\blacklozenge$}
\newcommand{\curl}{\textnormal{curl}}
\newcommand{\pare}[1]{\left( #1 \right)}
\newcommand{\norm}[1]{\left\| #1 \right\|}
\newcommand{\av}[1]{\left| #1 \right|}
\newcommand{\bra}[1]{\left[ #1 \right]}
\newcommand{\set}[1]{\left\{ #1 \right\}}
\newcommand{\Hud}{\dot{H}^{ \frac{1}{2}}}
\newcommand{\Hmud}{\dot{H}^{ -\frac{1}{2}}}
\newcommand{\Hu}{\dot{H}^{ 1}}
\newcommand{\cP}{\mathcal{P}}
\newcommand{\LqHu}{L^4_{T} \dot{H}^1}
\newcommand{\bu}{\bar{u}}
\newcommand{\cA}{\mathcal{A}}
\newcommand{\cC}{\mathcal{C}}
\newcommand{\cL}{\mathcal{L}}
\newcommand{\cQ}{\mathcal{Q}}
\newcommand{\cF}{\mathcal{F}}
\newcommand{\cD}{\mathcal{D}}
\newcommand{\cE}{\mathcal{E}}
\newcommand{\cS}{\mathcal{S}}
\newcommand{\bR}{\mathbb{R}}
\newcommand{\bN}{\mathbb{N}}
\newcommand{\bM}{\mathbb{M}}
\newcommand{\cG}{\mathcal{G}}
\newcommand{\cO}{\mathcal{O}}
\newcommand{\cN}{\mathcal{N}}
\newcommand{\cT}{\mathcal{T}}
\newcommand{\Hs}{\dot{H}^s}
\newcommand{\ns}{\textnormal{NS}}
\newcommand*{\RN}[1]{\expandafter\@slowromancap\romannumeral #1@}
\newcommand{\ps}[2]{\pare{ \left. #1 \ \right| \ #2 }}
\newcommand{\psc}[2]{\left\langle \left. #1 \ \right| \ #2 \right\rangle}
\newcommand{\hra}{\hookrightarrow}
\newcommand{\loc}{\textnormal{loc}}
\newcommand{\NS}{Navier-Stokes }
\theoremstyle{theorem}
\newtheorem{theorem}{Theorem}[section]
\newtheorem*{theorem*}{Theorem}
\newtheorem{prop}[theorem]{Proposition}
\newtheorem{lemma}[theorem]{Lemma}
\theoremstyle{definition}
\newtheorem{definition}[theorem]{Definition}
\newtheorem{rem}[theorem]{Remark}
\numberwithin{equation}{section}
\begin{document}

\AtEndDocument{\bigskip{\footnotesize
  \textsc{BCAM - Basque Center for Applied Mathematics,Mazarredo, 14,  E48009 Bilbao, Basque Country -- Spain} \par
  \textit{E-mail address:}  \texttt{\href{mailto:sscrobogna@bcamath.org}{sscrobogna@bcamath.org}}}}

 \maketitle

\begin{abstract}
We construct solutions for the Shilomis model of ferrofluids in a critical space, uniformly in the entropic relaxation time $ \tau \in\pare{0, \tau_0} $. This allows us to study the convergence when $ \tau\to 0 $ for such solutions. 
\end{abstract}

\tableofcontents

 \section{Introduction}

Ferrofluids are among the wide variety of synthetic materials created in the twentieth century. A ferrofluid  is a liquid that presents ferromagnetic properties, i.e. it becomes strongly magnetizable in the presence of an external magnetic field. Such a material does not exist naturally in the environment but it was created in 1963 by NASA \cite{stephen1965low} with a very specific goal: to be used as a fuel for rockets in an environment without gravity, hence the necessity to be pumped by applying a magnetic field. \\

Ferrofluids are collidal (a mixture in which one substance of microscopically dispersed insoluble particles is suspended throughout another substance) made of nanoscale ferromagnetic particles of a compound containing iron, suspended in a fluid. They are magnetically soft, which means that they do not retain magnetization once there is no external magnetic field acting on them. \\

 The versatility of such material and its peculiar property of being controlled via a magnetic field made it suitable to be  used in a whole variety of applications: ferrofluids are for instance used in loudspeakers in order to cool the coil and damp the cone \cite{Miwa2003}, as seals in magnetic hard-drives \cite{raj1982ferrofluid}, in order to reduce friction \cite{Huang2011} or enhance heat transfer \cite{LAJVARDI20103508, Sheikholeslami2015}. We refer the interested reader to \cite{Zahn2001}, the introduction of \cite{NST2016} and references therein for a survey of potential applications of ferrofluids. \\

 
 There are two systems of partial differential equations which are generally accepted as models for the motion of ferrofluids, which are known under the name of their developer, the Shliomis model \cite{shliomis1975non} and the Rosensweig model \cite{Rosensweig}. The mathematical analysis of such systems is very recent, in \cite{AH_Shilomis_weak, AH_Shilomis_strong, AH_Rosensweing_strong} and \cite{AH_Rosensweing_weak} it is proved that both Shliomis and Rosensweig model admit global weak and local strong solutions in bounded, smooth subdomains of $ \bR^3 $.  The same authors then considered as well thermal and electrical conductivity as well as steady-state solutions of various ferrofluids systems in  \cite{AH12-2, AH12, AH13, AH14, AH15, AH16} and \cite{HHl16}. In \cite{Scrobo_FF2D} and \cite{DS18} it was proved that the Rosensweig system for ferrofluids is globally well posed in dimension two.   \\

In the present work we consider the Bloch-Torrey regularization of the Shliomis system for ferrofluids in the whole three-dimensional space $ \bR^3 $
\begin{equation}\label{eq:Shilomis1} \tag{S1}
 		\left\lbrace
 			\begin{aligned}
 				& \rho_0 \pare{\partial_t u + \pare{u\cdot\nabla} u}- \nu \Delta u + \nabla p = \mu_0 \pare{M\cdot \nabla}H +\frac{\mu_0}{2}\ \curl\pare{M\times H}, && \pare{x, t}\in \bR^3\times \bR_+ ,\\
 				& \partial_t M + \pare{u\cdot\nabla} M -\sigma \Delta M = \frac{1}{2}\ \pare{ \curl\  u}\times M-\frac{1}{\tau}\pare{M-\chi_0 H} -\beta \ M \times\pare{M\times H}, && \pare{x, t}\in \bR^3\times \bR_+ ,\\
 				& \div \pare{H + M}=F , && \pare{x, t}\in \bR^3\times \bR_+ ,\\
 				& \div \ u=0, \ \curl \ H=0 , && \pare{x, t}\in \bR^3\times \bR_+ ,
 			\end{aligned}
 		\right.
 	\end{equation}
proposed by M.~Shliomis in \cite{Shliomis2, Shliomis3}. The function $ u $ represents the linear velocity of the fluid. If we denote as $ H_{\textnormal{ext}} $ the external magnetic field acting on the fluid $ F = -\div \ H_{\textnormal{ext}}  $ will be denoted as the \textit{external magnetic force}. The external magnetic field $ H_{\textnormal{ext}} $ induces a demagnetising field $ H $ and a magnetic induction $ B= H+M $. \\

 The parameter $ \sigma > 0 $ comes in play  when the diffusion of the spin magnetic moment is not negligible, we refer the reader to \cite{GaspariBloch}, and indeed it has a regularizing effect since in such regime the system \eqref{eq:Shilomis1} is purely parabolic. The constant $ \rho_0, \nu, \mu_0, \sigma, \tau, \chi_0, \beta $ are positive constants with a physical meaning. For the sake of readability we will consider the following normalization
\begin{equation*}
 \rho_0= \mu_0 = \beta =1. 
\end{equation*}
This assumption is made in order to simplify the readability of the paper only, and does not entails qualitative changes in the behavior of the solutions of \eqref{eq:Shilomis1} . 
On the other hand we will consider 
\begin{equation*}
\nu, \chi_0,  \sigma, \tau >0. 
\end{equation*}
We already mentioned why we consider $ \sigma > 0 $, while being $ \nu $ the kinematic viscosity of a fluid it is natural to assume it strictly positive. Let us hence now focus our attention on the remaining two physical parameters: $ \tau $ and $ \chi_0 $. The main scope of the present paper is in fact to describe the limit regimes of the solutions of \eqref{eq:Shilomis1} when $ \tau $ and $ \chi_0 $ tend to zero.  \\

\begin{itemize}

\item[$ \bf \tau $ :]
The parameter $ \tau $ is called the \textit{entropic relaxation time} of the system \eqref{eq:Shilomis1}, and roughly speaking it describes the average time required by the system \eqref{eq:Shilomis1} to recover a situation of equilibrium once it is perturbed. The average relaxation time of commercial grade ferrofluids is of the order
\begin{equation*}
\tau\approx 10^{-9} \ , 
\end{equation*}
whence, considering the smallness of such factor, it is reasonable to ask what happens to the solutions of \eqref{eq:Shilomis1} when $ \tau \to 0 $. 
 Despite the number of works on ferrofluids systems mentioned above  there is though, to the best of our knowledge, no systematic understanding of what this state of equilibrium might look like. On a formal level when $ \tau $ is very small the dynamic of the term
 \begin{equation*}
 \frac{1}{\tau}\pare{M-\chi_0 H}, 
 \end{equation*}
 is predominant in the evolution of $ M $, whence what is generally done in the literature is to consider the approximation
 \begin{equation}\label{eq:MH_balance_intro}
 M\approx \chi_0 H , 
 \end{equation}
 which, if satisfied, compensates the magnitude of $ \frac{1}{\tau}\pare{M-\chi_0 H} $. The main goal of the present work is hence to provide a first rigorous description of the solutions of \eqref{eq:Shilomis1} in the limit regime $ \tau\to 0 $, and to understand how and in which way small values of $  \tau $ can have stabilizing effects on the solutions of \eqref{eq:Shilomis1}. In a nutshell, we prove that when $ \tau \to 0 $
 \begin{equation}\label{eq:conv_MH_intro}
 \pare{M, H} \xrightarrow{\tau\to 0} \pare{\chi_0 G_F, \ G_F}, 
 \end{equation}
 where $ G_F $ is a function depending upon the external magnetic field only, while
 \begin{equation*}
 u\xrightarrow{\tau\to 0} U, 
 \end{equation*}
 where $ U $ is the unique solution of the following \NS system with hydrostatic-magnetic pressure
 \begin{equation*}
 \left\lbrace
 \begin{aligned}
 & \partial_t U + U \cdot \nabla U -\nu \Delta U = -\nabla \pare{\pi - \chi_0 P_F} , \\
 & \div\ U=0, 
 \end{aligned}
 \right.
 \end{equation*}
 where $ P_F $ depends only  on the external magnetic force $ F $, and in particular assumes the following explicit form
 \begin{equation*}
 P_F = \frac{1}{ 2 \pare{1+\chi_0}^2 }  \ \nabla \av{\nabla\Delta^{-1} F}^2. 
 \end{equation*}
 The derivation of such magnetic pressure is somewhat surprising and it will be discussed in detail later in the manuscript.

 \item[$ \chi_0 $ :] The dimensionless parameter $ \chi_0 $ is called \textit{magnetic susceptibility} and indicates whether a material is attracted into or repelled out of a magnetic field. If the magnetic susceptibility is greater than zero, the substance is said to be "paramagnetic"; the magnetization of the substance is higher than that of empty space. If the magnetic susceptibility is less than zero, the substance is "diamagnetic"; it tends to exclude a magnetic field from its interior. Since ferrofluids are magnetically soft materials their magnetization is  higher than that of the vacuum, hence the motivation that lead us to suppose $ \chi_0 >0 $. Experimental results show that for oil-based colloidals $ \chi_0\in\bra{0.3 \ , \ 4.3} $, while for water-based colloidals $ 0<\chi_0\ll 1 $: water-based ferrofluids are hence almost neutral to external magnetic forces.

 \end{itemize}

 The results provided and quickly illustrated here above formally justify the physical intuition of how the parameters $ \tau $ and $ \chi_0 $ influence the dynamics of \eqref{eq:Shilomis1}. Rigorously proving such results at a mathematical level is though not so immediate. The singular linear perturbation
 \begin{equation*}
  \frac{1}{\tau}\pare{M-\chi_0 H}, 
 \end{equation*}
 which is reminiscent of singular perturbations arising in problems in geophysical fluid mechanics (cf. \cite{LM98, monographrotating, gallagher_schochet} etc) is in fact of a different nature; it has no definite sign and more importantly it depends upon the external magnetic field $ F $. Being this the case the singular perturbation $  \frac{1}{\tau}\pare{M-\chi_0 H} $ does not supply a zero $ L^2 $ energy contribution as it happens for rotating fluids (\cite{CDGG2, gallagher_schochet}), compressible fluids (\cite{LM98, DanchinMach, Scrobo_Ngo}) or stratified fluids (\cite{charve1, charve2, Scrobo_Froude_FS, Scrobo_Froude_periodic, Sang_Scrobo_Froude}), whence it is not possible to construct global weak or local strong solutions uniformly in $ \tau > 0 $ by means of energy methods as it is done in the examples mentioned above. \\
 
 The way hence to construct a sequence $ \pare{U^\tau}_{\tau\in\pare{0, \tau_0}} $ of solutions of \eqref{eq:Shilomis1} passes through the understanding of the physical properties of the singular perturbation  $ \frac{1}{\tau}\pare{M-\chi_0 H} $; in the geophysical fluid dynamics setting mentioned above typically the singular perturbation induces hi-frequency oscillations on which it is possible to prove dispersive estimates. In the present case the singular perturbation seems to produce a \textit{damping} effect, but it is not at all clear how such damping acts on the system; the singular perturbation has in fact no definite sign in the unknowns $ u, M, H $ and hence we cannot immediately conclude in this way. \\
 
 The problem is that the unknowns $ M $ and $ H $ are not suitable in order to describe the system \eqref{eq:Shilomis1} \textit{uniformly in} $ \tau $. One part of the unknown is in fact effectively damped to zero while the other converges toward a stationary state; we must hence find another set of unknowns which somehow explicit such problem. If we define
 \begin{align*}
 \cP = 1_{\bR^3} - \Delta^{-1}\nabla\div , && \cQ = \Delta^{-1}\nabla\div, 
 \end{align*}
 it is rather easy to deduce from the magnetostatic equation $ \div\pare{M+H}=F $ that\footnote{Here we use the fact that $ \curl \ H=0 $. }
 \begin{equation}\label{eq:intro:MH_relation}
 H = -\cQ M +\Delta^{-1}\nabla F .
 \end{equation}
Using the relation \eqref{eq:intro:MH_relation} we can re-write the singular perturbation  $ \frac{1}{\tau}\pare{M-\chi_0 H} $ as 
\begin{equation}\label{eq:singular_pert_tau}
\frac{1}{\tau}\pare{M-\chi_0 H} = \frac{1}{\tau} \pare{1+\chi_0\cQ} M -\frac{\chi_0}{\tau}\Delta^{-1}\nabla F.
\end{equation}
This singular perturbation presents two immediate characteristics which are not present in some classical works on singular perturbation problems (\cite{GS3, GallagherSaint-Raymondinhomogeneousrotating, gallagher_schochet, CDGG2, Scrobo_primitive_horizontal_viscosity_periodic}, the list is far from being exhaustive);
\begin{itemize}

\item if the magnetic susceptibility $ \chi_0 $ is large, which is the case for oil-based ferrofluids as explained above,  the operator $ \pare{1+\chi_0\cQ} $ has not positive sign, 

\item the singular perturbation \eqref{eq:singular_pert_tau} is \textit{linear and non-homogeneous}, case that, to the best of our knowledge, has not yet been treated in the literature. 

\end{itemize}
 
 \noindent
 Instead we decide to tailor a specific approach to the problem; applying the operator $ \cP $ to the evolution equation of $ M $, and hence as well  to the singular perturbation $ \frac{1}{\tau}\pare{M-\chi_0 H} $, we deduce that
 \begin{equation*}
  \frac{1}{\tau}\cP\pare{M-\chi_0 H} =  \frac{1}{\tau}\cP M. 
 \end{equation*}
 It is hence clear that $ \cP M $, the divergence-free part of $ M $, is damped to zero in the evolution of the system \eqref{eq:Shilomis1}. The next very natural step is to compute the second term of the Hodge decomposition of $ \frac{1}{\tau}\pare{M-\chi_0 H} $ which is 
 \begin{equation}\label{eq:intro:1tauQM}
 \frac{1}{\tau}\cQ\pare{M-\chi_0 H} = \frac{1+\chi_0}{\tau}\pare{\cQ M - \frac{\chi_0}{1+\chi_0}\nabla\Delta^{-1} F }.
 \end{equation}
 In such setting we can hence deduce the new limit $ \tau\to 0 $ formal balance
 \begin{equation*}\label{eq:QM_balance_intro}
 \cQ M \approx \frac{\chi_0}{1+\chi_0}\nabla\Delta^{-1} F, 
 \end{equation*}
 which is much better than the balance \eqref{eq:MH_balance_intro} since now we obtain an asymptotic which depends only on the external magnetic field $ F $ and \textit{not on another unknown}. We can as well recover the formal limit asymptotic for $ H $ as well from the relation \eqref{eq:intro:MH_relation}. \\
 
 Despite a better understanding of the asymptotics as $ \tau\to 0 $ we did not yet solve the main problem of the mathematical construction of solutions uniformly in $ \tau $, the singular perturbation on the r.h.s. of \eqref{eq:intro:1tauQM} has still sign not defined, and appears in the system \eqref{eq:Shilomis1} applying the operator $ \cQ $ to the evolution equation of $ M $, i.e.
 \begin{equation}\label{eq:intro:equazione_QM}
 \partial_t \cQ M -\sigma\Delta \cQ M + \frac{1+\chi_0}{\tau}\pare{\cQ M - \frac{\chi_0}{1+\chi_0}\nabla\Delta^{-1} F } = \text{ Nonlinear terms }. 
 \end{equation}
 We remark at this point though that $ F $ is not an unknown of the problem. We can hence subtract $ \frac{\chi_0}{1+\chi_0}\pare{\big. \partial_t -\sigma \Delta}\nabla\Delta^{-1} F $ from both sides of \eqref{eq:intro:equazione_QM} and defining the new unknown $ r=\cQ M - \frac{\chi_0}{1+\chi_0}\nabla\Delta^{-1} F  $ we can deduce the evolution equation for $ r $
 \begin{equation*}
 \partial_t r -\sigma\Delta r + \frac{1+\chi_0}{\tau} \ r = \underbrace{ - \frac{\chi_0}{1+\chi_0}\pare{\big. \partial_t -\sigma \Delta}\nabla\Delta^{-1} F}_{\text{Outer force } f} + \text{ Nonlinear terms }, 
 \end{equation*}
 which is now damped and diffused, and we can close our argument. In detail, the new evolutionary system so obtained is of the form (here $ m=\cP M $)
 \begin{equation}\label{eq:intro:S2}
 \begin{aligned}
 & \partial_t u -\nu \Delta u &&    &&= \text{ Nonlinear terms }, \\
 & \partial_t m -\sigma \Delta m && +\frac{1}{\tau}\ m &&= \text{ Nonlinear terms }, \\
  & \partial_t r -\sigma\Delta r &&  + \frac{1+\chi_0}{\tau} \ r  && =  \text{ Nonlinear terms }&& +f. 
 \end{aligned}
 \end{equation}

 \noindent
 At this point we hence expect the unknown $ m, r $ in \eqref{eq:intro:S2} to be exponentially damped to zero at a rate $ \cO \pare{ e^{-t/\tau}} $. There are though two immediate obstructions to such deduction:
 \begin{itemize}
 
 \item The external force $ f $ in the evolution equation is an $ \cO\pare{1} $ function, 
 
 \item There are terms on the r.h.s. of \eqref{eq:intro:S2} which are $ \cO\pare{1} $ functions for $ m, r\to 0 $. 
 
 \end{itemize}
 
 Whence despite the tendency of the evolution of $ m $ and $ r $ is to be quickly damped to zero there are external forces in the system \eqref{eq:intro:S2} which are genuinely bigger than $ \tau $ and which induce a higher order growth on the unknowns $ m $ and $ r $. It is in this context that slightly more involved parabolic estimates are required (see Lemma \ref{lem:linear_damping_estimate} for the exact estimates used in this work) in order to see that $ {m, r} \xrightarrow{\tau\to 0} 0 $. A downside of such approach is that we are not able to quantify the rate of convergence \label{note:quantification} of $ m, r $ to zero as $ \tau\to 0 $, due indeed to the perturbative effects induced by the $ \cO\pare{1} $ perturbations. \\

Let us finally mention an unexpected stabilizing effect we remarked. We already mentioned and explained in reasonable detail that the components $ m $ and $ r $ of \eqref{eq:intro:S2} are subjected to a damping-in-time. Let us hence now consider that we we want to construct $ \LqHu $ solutions of \eqref{eq:intro:S2} in a fashion very similar  to what is done for the more familiar incompressible \NS equations. It is clear hence that if $ \tau $ is sufficiently small, hence the damping coefficient is very large, for any $ t > 0 $ the functions $ m\pare{t}, r\pare{t} $ are drawn to zero rather vigorously so that we expect that they are "small". This crude intuition lead us to think that we might as well expect to construct global solutions for \eqref{eq:intro:S2} imposing a smallness hypothesis on $ u_0 $, the initial data of the velocity flow, and $ \tau $: we can in fact construct global solutions substituting a smallness hypothesis on $ m_0, r_0 $ with a smallness hypothesis on $ \tau $. Such result is attainable only if we construct solutions in the critical space $ \LqHu $ and not in, say, $ L^\infty_T \Hud\cap L^2_T \dot{H}^{\frac{3}{2}} $; the damping effect has no effects on the $ L^\infty_T \Hud $ norm.

\subsection{Results and organization of the paper}

The main goal of the present paper is to study the properties of the solutions of system \eqref{eq:Shilomis1}  \textit{when the parameter $ \tau $ is small} or converging to zero, indeed hence the first (and main) result of the present work is an existence result which is uniform for $ \tau $ belonging to a suitable right-neighborhood of zero, whose size depends on the magnitude of the initial data. \\

From now on given a Banach space $ X $, any $ T\in\bra{0, \infty}, \ k\in\bN $ and $ p\in\bra{1, \infty} $ we denote as $ W^{k, p}_T X $ the space $ W^{k, p}\pare{\left[ 0, T \right)\big.  ; X} $. Given any Sobolev or Lebesgue space if the domain is not specified it is implicitly assumed to be $ \bR^3 $. Given any $ s<3/2 $ we define  the \textit{homogeneous}  Sobolev space $ \dot{H}^s \pare{\bR^d} $ as the closure of $ \cS_0\pare{\bR^d} $ with respect to the norm
\begin{equation*}
\norm{v}_{\dot{H}^s \pare{\bR^d}} = \pare{\int_{\bR^d}\av{\xi}^{2s}\av{\hat{u}\pare{\xi}}^2\d \xi}^{1/2},
\end{equation*}
while for any $ s\in\bR $ the \textit{non-homogeneous} Sobolev space $ H^s\pare{\bR^d} $ is composed of the tempered distributions $ v $ such that $ \pare{1+\sqrt{-\Delta} \ }^s \ v \in L^2\pare{\bR^d} $. Given any $ k\in\bN $ and $ p \in \bra{1, \infty} $ we say that $ v\in \dot{ W}^{k, p}_T X $ if $ \partial_t^k v \in L^p_T X $ and  $ v\in {W}^{k, p}_T X $ if $ \pare{ 1+\partial_t^k} v \in L^p_T X $. Given a vector field $ V : \bR^n\times \bR_+ \to \bR^m $ we will write $ V\in {W}^{k, p}\pare{\bra{0, T}; \Hs\pare{\bR^n}} $ instead than writing  $ V\in \pare{ {W}^{k, p}\pare{\bra{0, T}; \Hs\pare{\bR^n}}}^m $ in order to simplify the overall notation. 
  The capital letter $ C $ will always indicate a positive value independent by any parameter of the problem whose value may implicitly vary from line to line while $ c=\min\set{\big. \nu, \sigma} $. \\

Let us  moreover suppose the external magnetic field $ F $ belongs to the space\footnote{Remark that in this case the Sobolev space $ H^2 $  is considered to be non-homogeneous. }
\begin{equation} \label{eq:regularity_F}
F\in L^4_{\loc}\pare{ \bR_+;  L^2}\cap W^{1, 2}_{\loc} \pare{\bR_+;  H^2}.
\end{equation}
 We underline that the external magnetic field is \textit{not} an unknown of the problem, hence it is in no way restrictive to assume that it is smooth and integrable.

\begin{theorem}\label{thm:main_thm}
Let $ u_0\in \Hud, F\in L^4_{\loc}\pare{ \bR_+;  L^2}\cap W^{1, 2}_{\loc} \pare{\bR_+;  H^2} $. There exists a $ \rho, \varrho_0>0 $, where $ \rho > 2\varrho_0 $, and a $ T=T_{\varrho_0} > 0 $ defined as
\begin{equation}\label{eq:def_T}
T=T_{\varrho_0} = \sup \set{t \geqslant 0\Big.  \ \left| \ \norm{F}_{L^4\pare{\bra{0, t};L^2 \pare{\bR^3} }}<\varrho _0 \textnormal{ and } F\in W^{1, 2} \pare{ \left[0, t\right) ;  H^2 }\right. },
\end{equation}
sufficiently small so that
\begin{equation*}
\norm{F}_{L^4_TL^2}\leqslant \varrho_0 \leqslant \frac{\min\set{\min\set{\big.\nu, \sigma}^{1/2}, \ \min\set{\big.\nu, \sigma}^{3/4} }}{C}, 
\end{equation*}
 and such that if we define
\begin{align*}
 m_0 = \pare{1- \Delta^{-1}\nabla\div} M_0, \hspace{5mm} r_0= \Delta^{-1}\nabla\div \ M_0 -\frac{\chi_0}{1-\chi_0} \ \nabla\Delta^{-1}F. 
\end{align*}

\begin{enumerate}[{a)}]

\item \label{enum:thm1} Let $ u_0, m_0, r_0 \in \Hud $ be such that
\begin{align*}
\norm{u_0}_{\Hud}\leqslant \frac{\nu^{1/4}}{C} \ \rho,
&&
\norm{\pare{m_0, r_0}}_{\Hud}\leqslant\frac{\sigma^{1/4}}{C}\ \rho,  
\end{align*}
and
\begin{equation}\label{eq:smallness_tau1_thm}
\tau < \frac{\pare{1+\chi_0}^{7/3}}{ C\ \chi_0^{4/3}} \pare{\norm{ F}_{L^2_T\dot{H}^{2}}   + \norm{F}_{\dot{W}^{1, 2}_T L^2}}^{-4/3} \ \varrho_0^{4/3}.
\end{equation}
Then there exist a unique solution $ \pare{u, M, H} $ of \eqref{eq:Shilomis1} with initial data $ \pare{u_0, M_0} $ in  the space $ \cC_T \Hud \cap \LqHu $. 

\item \label{enum:thm2} Let $ {U_0} = \pare{u_0, m_0, r_0} \in \Hud $ arbitrarily large and $ \tau > 0 $ satisfy the relation \eqref{eq:smallness_tau1_thm}, there exist a $ T^\star = T^\star_{U_0}\in\pare{0, T} $, where $ T $ is defined in \eqref{eq:def_T},  such that the system \eqref{eq:Shilomis1} admits a unique solution with initial data $ \pare{u_0, M_0} $ in  the  space $ \cC_{T^\star} \Hud \cap  L^4_{T^\star}\dot{H}^1 $. 

\item \label{enum:thm3}  Let  $ u_0\in\Hud $  be such that
\begin{equation}\label{eq:smallness_vel_flow_thm}
\norm{u_0}_{ \Hud}\leqslant\frac{\nu^{1/4}}{C} \ \rho, 
\end{equation}
and $ m_0, r_0\in\dot{H}^1 $  arbitrary. Let $ \tau $ be sufficiently small so that
\begin{equation}\label{eq:smallness_tau_thm}
\tau\leqslant \min\set{
\frac{\rho^4}{C\pare{\norm{m_0}_{\dot{H}^1}^4 + \norm{r_0}_{\dot{H}^1}^4}}
\hspace{2mm}, \hspace{2mm}
 \frac{ \pare{{1+\chi_0}}^{7/3}\varrho_0^{4/3}}{C\chi_0^{4/3} \  \pare{\norm{ F}_{L^2_T\dot{H}^{2}}   + \norm{F}_{\dot{W}^{1, 2}_T L^2}}^{4/3}}
}. 
\end{equation}
Then there exist a unique solution $ \pare{u, M, H} $ of \eqref{eq:Shilomis1} with initial data $ \pare{u_0, M_0} $ in  the space $ \cC_T \Hud \cap \LqHu $.

\item \label{enum:thm4} Let $ u_0\in\Hud $ arbitrarily large and let $ \tau $ satisfy \eqref{eq:smallness_tau_thm}, there exist a $ T^\star\in\pare{0, T} $ such that the system \eqref{eq:Shilomis1} admits a unique solution with initial data $ \pare{u_0, M_0} $ in the space $\cC_{T^\star} \Hud \cap L^4_{T^\star}\dot{H}^1 $.

\end{enumerate}
\end{theorem}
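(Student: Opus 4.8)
\emph{Reformulation.} The plan is to recast \eqref{eq:Shilomis1} as the fully parabolic semilinear system \eqref{eq:intro:S2} for the triple $\pare{u,m,r}$ and to solve it by a fixed-point argument in the scaling-critical space $\cC_T\Hud\cap\LqHu$, extracting from the damped heat semigroups the smallness needed to absorb both the $\cO\pare{1}$ forcing and, in parts (a) and (c), the possibly large data $m_0,r_0$. Using the projectors $\cP,\cQ$ and the magnetostatic identity \eqref{eq:intro:MH_relation} one writes the evolution of $m=\cP M$ and of $r=\cQ M-\frac{\chi_0}{1+\chi_0}\nabla\Delta^{-1}F$, obtaining the three heat equations of \eqref{eq:intro:S2} --- the second damped at rate $1/\tau$, the third damped at rate $\pare{1+\chi_0}/\tau$ and forced by $f=-\frac{\chi_0}{1+\chi_0}\pare{\partial_t-\sigma\Delta}\nabla\Delta^{-1}F$ --- whose nonlinearities (transport, precession, Kelvin force and $M\times\pare{M\times H}$) become, after substituting $M=m+r+\frac{\chi_0}{1+\chi_0}\nabla\Delta^{-1}F$ and $H=-r+\frac{1}{1+\chi_0}\nabla\Delta^{-1}F$, polynomial in $\pare{u,m,r}$ with coefficients made of $\nabla\Delta^{-1}F$. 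A point to isolate: the purely-$F$ part of the magnetic force on $u$ is, since $M\parallel H\parallel\nabla\Delta^{-1}F$ there, a perfect gradient, hence absorbed into the pressure --- this is the origin of $P_F$ --- while the remaining force on $u$ carries at least one factor of $m$ or $r$.

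\emph{Linear and nonlinear estimates.} In the Duhamel formulation the free parts are controlled by $\norm{e^{\nu t\Delta}v_0}_{\LqHu}\lesssim\nu^{-1/4}\norm{v_0}_{\Hud}$, whereas for the damped components one factors out the exponential and uses $\norm{e^{-at/\tau}}_{L^4_T}\simeq\pare{\tau/a}^{1/4}$ to get $\norm{e^{t\pare{\sigma\Delta-a/\tau}}w_0}_{\LqHu}\lesssim\pare{\tau/a}^{1/4}\norm{w_0}_{\Hu}$; this is exactly why the hypotheses on $m_0,r_0$ in (a) and (c) are imposed in $\Hu$ (one derivative above critical) weighted by a $\tau$-power. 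For the forcing one couples parabolic smoothing with $\norm{e^{-at/\tau}}_{L^{4/3}_T}\simeq\pare{\tau/a}^{3/4}$, $a=1+\chi_0$, and $\norm{f}\lesssim\frac{\chi_0}{1+\chi_0}\pare{\norm{F}_{L^2_T\dot H^2}+\norm{F}_{\dot W^{1,2}_T L^2}}$, so the contribution of $f$ to $\norm{r}_{\LqHu}$ is $\lesssim\chi_0\pare{1+\chi_0}^{-\alpha}\tau^{3/4}\pare{\norm{F}_{L^2_T\dot H^2}+\norm{F}_{\dot W^{1,2}_T L^2}}$ for a suitable $\alpha>0$, and imposing this to be comparable to $\varrho_0$ produces precisely \eqref{eq:smallness_tau1_thm} and \eqref{eq:smallness_tau_thm}; this is the content of Lemma \ref{lem:linear_damping_estimate}. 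The nonlinear terms are estimated by the $3$D product laws $\Hu\cdot\Hu\hra\Hud$ and $\Hu\cdot\Hu\cdot\Hu\hra L^2$ (via $\Hu\hra L^6$): after one spatial derivative the quadratic terms lie in $L^2_T\Hmud$ and are returned to $\LqHu$ by the classical boundedness of the Navier--Stokes bilinear operator, and similarly for the cubic term.

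\emph{Contraction and conclusion.} One then applies Banach's fixed point in a ball of $\pare{\cC_T\Hud\cap\LqHu}^3$: in (a) the radius $\rho$ is set by the smallness of $u_0,m_0,r_0$ together with \eqref{eq:smallness_tau1_thm}; in (c) the velocity ball is controlled by $u_0$ alone, while the $\pare{m,r}$-ball is made small purely through the $\tau$-gains of the damped free evolution and of the forcing, despite $m_0,r_0$ being arbitrary in $\Hu$. For (b) and (d), where the data are arbitrary, one uses instead the absolute continuity of $t\mapsto\norm{e^{\nu t\Delta}u_0}_{L^4\pare{\bra{0,t};\Hu}}$ to pick $T^\star<T$ on which the free evolution is small and contracts on $\bra{0,T^\star}$. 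Finally, undoing the change of variables recovers $\pare{u,M,H}$ solving \eqref{eq:Shilomis1} in $\cC_T\Hud\cap\LqHu$ (resp.\ up to $T^\star$), with uniqueness inherited from the contraction.

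\emph{Main obstacle.} The crux is the linear estimate for the damped heat flow applied to the non-homogeneous, sign-indefinite forcing $f$: unlike the oscillatory singular perturbations of geophysical fluid mechanics, $f$ has no favourable sign and no zero-average structure, so its smallness as $\tau\to0$ must be extracted solely from the competition between its $\cO\pare{1}$ size and the large damping, and this in the scaling-critical norm $\LqHu$, uniformly in $\tau\in\pare{0,\tau_0}$ --- and, for oil-based fluids, over a range of $\chi_0$ on which $1+\chi_0\cQ$ is not sign-definite. Tracking the exact $\tau$- and $\chi_0$-powers there, at the price of not being able to quantify the rate $\pare{m,r}\to0$ because of the $\cO\pare{1}$ perturbations, is the delicate part.
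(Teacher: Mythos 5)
Your proposal is correct and follows essentially the same route as the paper: the same change of unknowns $\pare{u,\cP M,\cQ M-\tfrac{\chi_0}{1+\chi_0}\nabla\Delta^{-1}F}$, the same Duhamel formulation with the damped heat semigroups yielding the gains $\tau^{1/4}$ on the data in $\dot H^1$ and $\tau^{3/4}$ on the $\cO\pare{1}$ forcing $f$ (in the paper this is Lemma \ref{lem:parabolic2} and Lemma \ref{lem:smoothin_bulk_force} rather than Lemma \ref{lem:linear_damping_estimate}, which serves the convergence proof), the same Sobolev product laws for the quadratic and cubic terms, and the same contraction in $\LqHu$. The only cosmetic deviation is that you absorb the pure-$F$ Kelvin force $\tfrac{\chi_0}{\pare{1+\chi_0}^2}\cG_F\cdot\nabla\cG_F$ into the pressure as a perfect gradient, whereas the paper keeps it as the forcing $g_1$ and bounds it by $C\varrho_0^2$; both are legitimate.
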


\begin{rem}

\begin{itemize}
\item The value $ T $ defined in \eqref{eq:def_T} is well defined and strictly positive since the application
\begin{equation*}
t\mapsto \norm{F}_{L^4\pare{\bra{0, t}; L^2 \pare{\bR^3}}},
\end{equation*}
 is continuous and non-decreasing in $ \bR_+ $ and zero when $ t=0 $. 
From now on when we write $ T $ we will always consider the value defined by \eqref{eq:def_T}. Let us remark that if $ F $ is sufficiently small in $ L^4 \pare{ \bR_+;  L^2} $ then $ T $ can be equal to infinity as well, transforming hence the results stated in the points \ref{enum:thm1} and \ref{enum:thm3} in genuinely global-in-time results.

\item In the definition \eqref{eq:def_T} we must include the hypothesis $ F\in W^{1, 2}_T H^2 $ \textit{only} for the case in case in which $ T=\infty $. In fact a priori it may as well happen that $ \norm{F}_{L^4\pare{\bR_+; L^2}}\leqslant \varrho_0 $, $ F\in W^{1, 2}_{\loc} \pare{\bR_+ ; H^2 } $ but $ F $ \textit{does not belong} to the space $ W^{1, 2} \pare{\bR_+ ; H^2 }  $. In such setting we implicitly use the fact that $F\in  W^{1, 2} \pare{\bR_+ ; H^2 }  $ in setting the smallness hypothesis \eqref{eq:smallness_tau1_thm} and \eqref{eq:smallness_tau_thm}. 

\item The points \ref{enum:thm1} and \ref{enum:thm2} in the statement of Theorem \ref{thm:main_thm} can be rephrased as "global" existence for small data and "local" existence for arbitrary initial critical data. Indeed the point \ref{enum:thm1} is a proper global-in-time result only if $ T=\infty $ where $ T $ is defined in \eqref{eq:def_T}: the hypothesis on $ T $, which is a smallness hypothesis on the norm of $ F $, avoids that the external magnetic field $ F $ pumps too much energy in the system. It is in fact intuitive that, if $ M, H $ have to satisfy the magnetostatic equation
\begin{equation*}
\div \pare{M+H}=F, 
\end{equation*} 
and $ F $ is "arbitrarily large" then the curl-free part of $ M+H $ will be arbitrarily large as well (in some appropriate, non specified, topology). In such scenario $ M $ and $ H $ result to be hence "large" and it is not possible to construct solutions via a fixed point theorem around a stationary state of \eqref{eq:Shilomis1}. 

\item The points \ref{enum:thm3} and \ref{enum:thm4} are again a "global" and "local" existence result. We focus now on the characteristics of the point \ref{enum:thm3}. It is worth noticing  that we impose a smallness hypothesis on the initial data for the velocity field $ u_0 $ and for $ \tau $. We let hence $ M_0 $ and $ H_0 $ be \textit{arbitrarily large in} $ \dot{H}^1 $; this effect  is due to the term $ \frac{1}{\tau}\pare{M-\chi_0 H} $ in \eqref{eq:Shilomis1}. Roughly speaking such term provides a damping with damping coefficient $ \tau^{-1} $ which we will exploit in order to damp the $ \dot{H}^1 $ norm of $ M_0 $ and $ H_0 $ sufficiently fast so that the overall $ \LqHu $ norm will result to be small, hence to possibility  apply a fixed point theorem. It is also for this reason that we construct solutions in the critical space $ \LqHu $ instead that, say, the more natural critical energy space $ L^\infty_T \dot{H}^{\frac{1}{2}}\cap L^2_T \dot{H}^{\frac{3}{2}}  $. If we start with large $ \Hud $ data the damping effect does not influences the overall $ L^\infty_T \dot{H}^{\frac{1}{2}} $ norm of the solution, hence a fixed point theorem based on the smallness of the norm is not applicable in such setting \textit{when large initial data is considered}, in fact $ M_0 $ and $ H_0 $ can even be \textit{unbounded} in $ \Hud $, but they have to be finite in $ \dot{H}^1 $ in order to apply the result in Theorem \ref{thm:main_thm}, \ref{enum:thm3}. 

\item Let us remark again that in the point \ref{enum:thm3} of Theorem \ref{thm:main_thm} the only hypothesis assumes on $ m_0, r_0 $ is a smallness hypothesis with respect to $ \tau $ \textit{in the space $ \dot{H}^1 $}. The data  $ m_0, r_0  $ can even be \textit{unbounded} in the critical space $ \Hud $; we are hence able to construct a global-in-time solution for the system \eqref{eq:Shilomis1} imposing a smallness hypothesis on the initial velocity flow $ u_0 $ only. 

\item Since the points \ref{enum:thm3} and \ref{enum:thm4} represent an unexpected dynamical effect for the system \eqref{eq:Shilomis1} we will prove explicitly only the point \ref{enum:thm3}, being the other points simple variations of this one. 

\item Even if we restrain ourselves to the more familiar setting stated in the points \ref{enum:thm1} and \ref{enum:thm2} we construct solutions in the critical space $ \LqHu $ imposing initial data in $ \Hud $; we construct hence potentially \textit{infinity $ L^2 $ energy solutions} for \eqref{eq:Shilomis1}. This work is, to the best of our knowledge, the first work in which infinite $ L^2 $ energy solutions for ferrofluids systems are constructed. It is worth to remark that if we try to construct solutions for \eqref{eq:Shilomis1} using the natural $ L^2 $ energy of the system (see \cite{AH_Shilomis_weak, AH_Rosensweing_weak, Scrobo_FF2D, DS18}) uniformly in $ \tau $ we deduce an estimate of the form
\begin{equation*}
\cE \pare{t} + c_{\tau} \int_0^t \cD \pare{t'} \d t' \leqslant \frac{C}{\tau} ,
\end{equation*}
where $ \cE $ and $ \cD $ are the natural energy and dissipation of the system \eqref{eq:Shilomis1}. Energy methods are hence not applicable in order to construct solutions of \eqref{eq:Shilomis1} uniformly in $ \tau $ since the r.h.s. of the above equation blows-up as $ \tau\to 0 $ and does not provide uniform estimates. 

\end{itemize}
\end{rem}

Theorem \ref{thm:main_thm} is hence an existence result for solutions of \eqref{eq:Shilomis1} which holds \textit{uniformly for $ \tau $ in a right neighborhood $ \pare{0, \tau_0} $ of zero}. As we already explained in detail in the remark above the points \ref{enum:thm3} and \ref{enum:thm4} deal with \textit{stabilizing properties} of solutions of \eqref{eq:Shilomis1} when $ \tau $ is small. It is hence a natural question at this stage to ask whether solutions of \eqref{eq:Shilomis1} converges (and if they do, in which topology) to some limit flow. \\
\noindent
It turns out that the term $ \frac{1}{\tau}\pare{M-\chi_0 H} $ acts effectively as an exponential damping on the components $ M, H $; such damping effect is though not immediate to prove, and neither it is immediate to rigorously deduce from the structure of the equations \eqref{eq:Shilomis1}. The precise statement is the following one:

\begin{theorem}\label{thm:convergence}
Let us consider the same hypothesis as in Theorem \ref{thm:main_thm}, \ref{enum:thm3} and let us suppose moreover that $ m_0, r_0\in\Hud $, let us consider any (small) $ \varepsilon\in\pare{0, T} $, then
\begin{equation}\label{eq:convergence_MH}
\begin{aligned}
M \xrightarrow{\tau\to 0} \frac{\chi_0}{1+\chi_0} \nabla\Delta^{-1} F , && \text{in } L^\infty\pare{\pare{\varepsilon, T}; \Hud}, \\
H \xrightarrow{\tau\to 0} \frac{1}{1+\chi_0} \nabla\Delta^{-1} F , && \text{in } L^\infty\pare{\pare{\varepsilon, T}; \Hud}.
\end{aligned}
\end{equation}
Moreover the following convergence hold true
\begin{equation}\label{eq:convergence_space}
\begin{aligned}
u & \xrightarrow{\tau\to 0} \bu, & \text{in } & L^\infty\pare{\pare{\varepsilon, T}; \Hud}, \\
\nabla u & \xrightarrow{\tau\to 0} \nabla\bu, & \text{in } &  L^2\pare{\pare{\varepsilon, T}; \Hud},
\end{aligned}
\end{equation}
 where $\bu$ is the solution of the following  incompressible \NS system with additional magnetic pressure
\begin{equation}\label{eq:limit_system_thm}
\left\lbrace
\begin{aligned}
& \partial_t\bu + \bu\cdot\nabla\bu -\nu \Delta\bu +\nabla\bar{p} = \frac{\chi_0}{2 \pare{1+\chi_0}^2}\ \nabla\av{\nabla\Delta^{-1} F}^2 , \\
& \div\ \bu=0, \\
&\left.\bu\right|_{t=0} = u_0.  
\end{aligned}
\right.
\end{equation}

\end{theorem}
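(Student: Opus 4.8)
The plan is to work throughout with the reformulated system \eqref{eq:intro:S2} in the unknowns $\pare{u,m,r}$, where $m=\cP M$ and $r=\cQ M-\frac{\chi_0}{1+\chi_0}\nabla\Delta^{-1}F$, for which Theorem~\ref{thm:main_thm}, \ref{enum:thm3} already supplies a family $\pare{u^\tau,m^\tau,r^\tau}_{\tau\in\pare{0,\tau_0}}$ bounded in $\LqHu$ uniformly in $\tau$. Since $M=\frac{\chi_0}{1+\chi_0}\nabla\Delta^{-1}F+m+r$ and, via \eqref{eq:intro:MH_relation}, $H=\nabla\Delta^{-1}F-\cQ M=\frac{1}{1+\chi_0}\nabla\Delta^{-1}F-r$, the two convergences \eqref{eq:convergence_MH} are \emph{equivalent} to the single statement
\begin{equation*}
\pare{m^\tau,r^\tau}\xrightarrow{\ \tau\to0\ }\pare{0,0}\qquad\text{in }L^\infty\pare{\pare{\varepsilon,T};\Hud}.
\end{equation*}
So the proof splits into two essentially independent parts: (i) the damping of $\pare{m^\tau,r^\tau}$, and (ii) the identification of the limit of $u^\tau$.

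For (i), note that $m^\tau$ and $r^\tau$ solve scalar heat equations with a damping coefficient of order $\tau^{-1}$, a source $f=-\frac{\chi_0}{1+\chi_0}\pare{\partial_t-\sigma\Delta}\nabla\Delta^{-1}F$ bounded uniformly in $\tau$ in a suitable time--Sobolev space by the hypothesis \eqref{eq:regularity_F}, and nonlinear terms which — thanks to the uniform $\LqHu$ bounds of Theorem~\ref{thm:main_thm} and the product laws in homogeneous Sobolev spaces — are also bounded uniformly in $\tau$ in a suitable such space. Applying Duhamel's formula and Lemma~\ref{lem:linear_damping_estimate}, the contribution of the data is controlled by $e^{-c\varepsilon/\tau}\pare{\norm{m_0}_{\Hud}+\norm{r_0}_{\Hud}}$, which vanishes as $\tau\to0$ precisely because in Theorem~\ref{thm:convergence} we additionally assume $m_0,r_0\in\Hud$; the contribution of the source and of the nonlinear terms amounts, schematically, to a time convolution of the exponential kernel $e^{-c\,\cdot/\tau}$ against a function of the time variable that is fixed in $\tau$, and its norm tends to $0$ with $\tau$. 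This gives $\pare{m^\tau,r^\tau}\to0$ in $L^\infty\pare{\pare{\varepsilon,T};\Hud}$ (and, by the same lemma, in $L^4\pare{\pare{0,T};\Hu}$ through the decay of the initial layer); as anticipated in the introduction, the $\cO\pare{1}$ size of $f$ and of part of the nonlinearity prevents a quantitative rate.

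For (ii), observe first that the right-hand side of \eqref{eq:limit_system_thm} is a gradient, so $\bu$ is nothing but the Fujita--Kato solution of the incompressible Navier--Stokes equations with small datum $u_0\in\Hud$, which exists, is unique and belongs to $\LqHu$. Set $w^\tau=u^\tau-\bu$, so that $w^\tau|_{t=0}=0$, and subtract \eqref{eq:limit_system_thm} from the $u^\tau$-equation of \eqref{eq:Shilomis1}. After projecting onto divergence-free fields the transport terms combine into $-\pare{w^\tau\cdot\nabla}u^\tau-\pare{\bu\cdot\nabla}w^\tau$, bilinear in $w^\tau$ and a uniformly bounded field. For the magnetic forces, write $M^\tau=\sG+\pare{m^\tau+r^\tau}$ and $H^\tau=\tilde{\sG}-r^\tau$ with $\sG=\frac{\chi_0}{1+\chi_0}\nabla\Delta^{-1}F$, $\tilde{\sG}=\frac{1}{1+\chi_0}\nabla\Delta^{-1}F$; since $\sG$ and $\tilde{\sG}$ are \emph{parallel gradients}, one has $\sG\times\tilde{\sG}=0$ and $\pare{\sG\cdot\nabla}\tilde{\sG}=\frac{\chi_0}{2\pare{1+\chi_0}^2}\nabla\av{\nabla\Delta^{-1}F}^2$, i.e. the ``principal'' magnetic force is \emph{exactly} the magnetic pressure appearing in \eqref{eq:limit_system_thm} and cancels with it, while every remaining magnetic contribution carries at least one factor $m^\tau$ or $r^\tau$ and hence, by part (i) and the product laws, tends to $0$ in an appropriate negative-index time--Sobolev space. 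One is thus left with a forced Navier--Stokes-type perturbation equation for $w^\tau$ with vanishing data and vanishing right-hand side error; a standard $\Hud$ energy estimate, closed by a Gronwall argument (the bilinear term absorbed into the viscous dissipation thanks to the uniform $\LqHu$ control of $u^\tau$ and $\bu$, itself a consequence of the smallness \eqref{eq:smallness_vel_flow_thm} of $u_0$), yields $w^\tau\to0$ in $L^\infty\pare{\pare{\varepsilon,T};\Hud}\cap L^2\pare{\pare{\varepsilon,T};\dot{H}^{\frac{3}{2}}}$, which is exactly \eqref{eq:convergence_space}. Since all limits are uniquely determined, convergence holds for the whole family and not just subsequences.

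The main obstacle is clearly part (i): although $m^\tau$ and $r^\tau$ are ``morally'' damped at the rate $e^{-t/\tau}$, the source $f$ and several nonlinear contributions are genuinely of size $\cO\pare{1}$, so exponential decay cannot simply be invoked; one must run the refined parabolic estimate of Lemma~\ref{lem:linear_damping_estimate}, which balances the heat smoothing, the exponential damping and the (borderline) time integrability of the forcing, to still extract decay as $\tau\to0$. The second, more algebraic, delicate point is the cancellation in part (ii): one must recognise that the leading part of $\pare{M\cdot\nabla}H+\frac12\curl\pare{M\times H}$ collapses onto the pure gradient $\frac{\chi_0}{2\pare{1+\chi_0}^2}\nabla\av{\nabla\Delta^{-1}F}^2$ — hence can be reabsorbed into the pressure of the limit system — and that all the mixed terms vanish in the right topology; this is precisely what accounts for the unexpected form of the limiting magnetic pressure.
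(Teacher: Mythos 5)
Your overall strategy coincides with the paper's: reduce \eqref{eq:convergence_MH} to $\pare{m^\tau,r^\tau}\to 0$, obtain that damping from Lemma \ref{lem:linear_damping_estimate} applied to the Duhamel formulation with forcing split as $L^2_T\dot H^{-1/2}+L^{4/3}_TL^2$ (this is exactly Step 1 of Proposition \ref{prop:convergence}, and it is also where the extra hypothesis $m_0,r_0\in\Hud$ is used), and then identify the limit of $u^\tau$ through the cancellation $\cG_F\cdot\nabla\cG_F=\frac12\nabla\av{\nabla\Delta^{-1}F}^2$, $\cG_F\times\cG_F=0$, followed by an $\Hud$ energy/Gronwall estimate on $\delta u^\tau=u^\tau-\bu$. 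Two points, however, deserve attention.

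First, a genuine soft spot in your part (ii): you initialize the perturbation equation at $t=0$ with $w^\tau|_{t=0}=0$ and claim the Gronwall argument closes because the magnetic remainder $G^\tau$ ``tends to $0$ in an appropriate negative-index time--Sobolev space by part (i)''. But part (i) only yields smallness of $\pare{m^\tau,r^\tau}$ in $L^\infty\pare{\pare{\varepsilon,T};\Hud}$ --- there is \emph{no} damping at $t=0$ --- so the product estimates only give $G^\tau\to 0$ in $L^2\pare{\pare{\varepsilon,T};\dot H^{-1/2}}$. An energy inequality integrated from $t=0$ picks up $\int_0^{\varepsilon}\norm{G^\tau}^2_{\dot H^{-1/2}}\,\d t'$, which is $\cO\pare{1}$ and not $o_\tau\pare{1}$, so the estimate as you state it does not close. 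The paper instead restarts the Cauchy problem \eqref{eq:deltau} at $t=\varepsilon$, bounds $\norm{\delta u^\tau\pare{\varepsilon}}_{\Hud}$ by a quantity $\eta_\varepsilon$ vanishing as $\varepsilon\to0$, and combines the two smallnesses; some such device (or a separate argument that the initial layer contributes $o_\tau\pare{1}$, e.g.\ via the $L^2_T\Hud$ bound $\norm{\pare{m^\tau,r^\tau}}_{L^2_T\Hud}=\cO\pare{\sqrt\tau}$) is needed. Second, an omission rather than an error: the bound $\norm{R^\tau\otimes\nabla R^\tau}_{L^2\dot H^{-1/2}}\lesssim\norm{R^\tau}_{L^\infty\Hud}\norm{\nabla R^\tau}_{L^2\Hud}$ on which the vanishing of $G^\tau$ rests requires $\nabla m^\tau,\nabla r^\tau$ to be bounded in $L^2_T\Hud$ \emph{uniformly in} $\tau$; this does not follow from the $\LqHu$ construction alone and is the object of the auxiliary energy estimate \eqref{eq:energy_Hud_est_mr} (Step 2 of the paper's proof), which your proposal skips entirely.
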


\begin{rem}
\begin{itemize}

\item We want to underline that the convergence mentioned in Theorem \ref{thm:convergence} takes place only in the topology \eqref{eq:convergence_space}; this is justified by the fact that when $ \tau\to 0 $ a genuine damping effect is induced, whence we \textit{cannot} have immediate convergence (i.e. in $ L^\infty\pare{\pare{0, \varepsilon}; \Hud} $) to the limit function.

\item Let us denote respectively with $ M^0 $ and $ H^0 $ the r.h.s. of \eqref{eq:convergence_MH}, i.e.

\begin{align*}
M^0 & = \frac{\chi_0}{1+\chi_0} \nabla \Delta^{-1}F, & 
H^0 & = \frac{1}{1+\chi_0} \nabla \Delta^{-1}F . 
\end{align*}
If we let $ \tau \to 0 $ in the equation for $ M $ appearing in \eqref{eq:Shilomis1} and we denote $ \bu = \lim _{\tau\to 0} u $ consistently with the notation of Theorem \ref{thm:convergence} it looks at a fist glance that such limiting process on the equation of $ M $ induces a nonlinear constraints which relates $ \bu $ with the limiting flows $ M^0, \ H^0 $ which are uniquely determined by the external magnetic force $ F $, whence $ \bu = \bu \pare{F} $ which could not satisfy \eqref{eq:limit_system_thm} in $ \pare{\varepsilon, T} $ making of the limit system an overdetermined problem. This is indeed not the case since despite the following convergence holds true
\begin{equation*}
M-\chi_0 H \xrightarrow{\tau\to 0} 0 ,
\end{equation*}
in a sufficiently weak sense (say $ \cD'\pare{\bR^3\times\pare{\varepsilon, T}} $ ) we are unable to quantify the rate of convergence toward zero of $ M-\chi_0 H $ as it has been already mentioned at page \pageref{note:quantification}. Whence we do not actually know to which element will the term 
\begin{equation*}
\frac{1}{\tau}\pare{M-\chi_0 H}, 
\end{equation*}
converge. This can though be easily deduced, at least in a formal way; let us consider a $ \phi \in \cD \pare{\bR^3\times\pare{\varepsilon, T}}  $, considering the convergences \eqref{eq:convergence_MH} and \eqref{eq:convergence_space}, and supposing there exists a $ f \in \cD' \pare{\bR^3\times\pare{\varepsilon, T}}  $ so that
\begin{equation*}
\frac{1}{\tau}\pare{M-\chi_0 H}\xrightarrow[\tau\to 0]{\cD'} - f, 
\end{equation*}
testing the equation \eqref{eq:Shilomis1} with $ \phi $ and letting $ \tau \to 0 $ the limit equation solved by $ M^0 $ (in $ \cD' $) is
\begin{equation*}
\partial_t M^0 + \pare{\bu\cdot\nabla} M^0 -\sigma \Delta M^0 - \frac{1}{2}\ \pare{ \curl\  \bu}\times M^0 = f , 
\end{equation*}
whence the limit problem is consistently expressed. 
\end{itemize}
 \fine
\end{rem}

The present paper is structured as follows:

\begin{itemize}

\item Section \ref{sec:preliminaries} is devoted to introduce some preliminary result which we use all along the paper. In particular Section \ref{sec:linear_parabolic_estimates} consists of a series of bounds for linear parabolic equations with damping which will result very important in the application of the fixed point theorem in Section \ref{sec:fixed_poin_application}. 

\item In Section \ref{sec:ref_syst} we define a new set of unknowns for the system \eqref{eq:Shilomis1} so that we can deduce a new system (see \eqref{eq:Shilomis2} for the detailed definition) which highlights and makes explicit the damping effect induced by the singular perturbation $ \tau^{-1}\pare{M-\chi_0 H} $. Such procedure has been already outlined in the introduction, in Section \ref{sec:ref_syst} we make this argument rigorous. 

\item Section \ref{sec:existence_sol} is the core of the present article, in such section we prove Theorem \ref{thm:main_thm} which is the most technical result of the present paper. The proof of Theorem \ref{thm:main_thm} consists in a fixed point argument, which has to be performed carefully, and more importantly, has to be adapted to highlight the particular properties of the system \eqref{eq:Shilomis1} (most notably the damping effects induced by the singular perturbation $ \tau^{-1}\pare{M-\chi_0 H} $). 

\item Section \ref{sec:conv_tau} is devoted to the proof of Theorem \ref{thm:convergence}. Using the result proved in Section  \ref{sec:existence_sol} (i.e. Theorem \ref{thm:main_thm}, an existence result uniform in $ \tau $) we prove at first that some part of the the system is effectively damped to zero in a critical norm away from $ t=0 $, next we use such convergence in order to prove that the velocity flow converges toward the system \ref{eq:limit_system_thm}.

\end{itemize}

\section{Preliminaries}\label{sec:preliminaries}

All along the present paper we will consider nonlinear interactions of (homogeneous) Sobolev functions. It is well known that, in a more general context, the product of two distributions in, a priori, not well defined, cf. \cite{Schwartz54}. In the context of Sobolev functions we can state the following elementary criterion:

\begin{lemma}\label{lem:Sob_product_rules}
Let $ \pare{s, t}\in\bR^2 $ and $ d\in\bN\setminus \set{0} $ be such that $ s, t < \frac{d}{2} $ and $ s+t >0 $. 
The point-wise product application maps continuously $ \Hs \pare{\bR^d}\times \dot{H}^t\pare{\bR^d} $ onto $ \dot{H}^{s+t-\frac{d}{2}}\pare{\bR^d} $, i.e. if we consider $ u \in \Hs \pare{\bR^d}, \ v\in \dot{H}^t\pare{\bR^d} $, there exists a $ C>0 $ depending only on the dimension $ d $ so that
\begin{equation*}
\norm{u \ v}_{\dot{H}^{s+t-\frac{d}{2}}\pare{\bR^d} }\leqslant C \norm{u}_{\Hs \pare{\bR^d}}\norm{v}_{\dot{H}^t\pare{\bR^d}}. 
\end{equation*}
\end{lemma}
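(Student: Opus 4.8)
My proof plan is the standard Littlewood--Paley / Bony paraproduct argument. Let $\Delta_q$ denote the (homogeneous) dyadic frequency blocks adapted to $\dot H^s\pare{\bR^d}$ and $S_q=\sum_{q'<q}\Delta_{q'}$ the associated low-frequency truncations. One first observes that the hypotheses $s,t<\frac{d}{2}$ and $s+t>0$ force $s+t-\frac{d}{2}\in\pare{-\frac{d}{2},\frac{d}{2}}$, so the target space $\dot H^{s+t-\frac{d}{2}}\pare{\bR^d}$ is meaningful in the sense of the definition recalled above. Then one decomposes
\begin{equation*}
u\,v=T_u v+T_v u+R\pare{u,v},\qquad T_u v=\sum_q S_{q-1}u\ \Delta_q v,\qquad R\pare{u,v}=\sum_{\av{q-q'}\leqslant 1}\Delta_q u\ \Delta_{q'} v,
\end{equation*}
and bounds the three contributions separately, the claim following once the dyadic estimates are summed in $\ell^2$ so as to reconstruct the $\dot H^{s+t-\frac{d}{2}}$ norm. (On the restricted range $0\leqslant s,t$ with $s+t\leqslant\frac{d}{2}$ one could bypass this machinery, estimating $\norm{uv}_{\dot H^{s+t-d/2}}$ by Hölder between the Sobolev embeddings $\Hs\hra L^{2d/(d-2s)}$, $\dot H^t\hra L^{2d/(d-2t)}$ and the embedding $L^r\hra\dot H^{s+t-d/2}$; but the full range of exponents genuinely requires the paraproduct splitting.)

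For the paraproduct $T_u v$, the block $\Delta_q\pare{T_u v}$ is a finite sum of terms $S_{q'-1}u\,\Delta_{q'}v$ with $\av{q-q'}\leqslant N$ for a fixed $N$, so Hölder gives $\norm{\Delta_q\pare{T_u v}}_{L^2}\leqslant C\sum_{\av{q-q'}\leqslant N}\norm{S_{q'-1}u}_{L^\infty}\norm{\Delta_{q'}v}_{L^2}$. By Bernstein's inequality $\norm{S_{q'-1}u}_{L^\infty}\leqslant C\sum_{q''\leqslant q'-2}2^{q''d/2}\norm{\Delta_{q''}u}_{L^2}=C\sum_{q''\leqslant q'-2}2^{q''\pare{d/2-s}}\pare{2^{q''s}\norm{\Delta_{q''}u}_{L^2}}$, and since $\frac{d}{2}-s>0$ the geometric weight makes this sum converge, to $C\,2^{q'\pare{d/2-s}}\norm{u}_{\Hs}$. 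Inserting this and using that $\pare{2^{q't}\norm{\Delta_{q'}v}_{L^2}}_{q'}$ is an $\ell^2$ sequence of norm $\norm{v}_{\dot H^t}$, one gets $2^{q\pare{s+t-d/2}}\norm{\Delta_q\pare{T_u v}}_{L^2}\leqslant C\,c_q\norm{u}_{\Hs}\norm{v}_{\dot H^t}$ with $\pare{c_q}\in\ell^2$, which is the desired bound after summation. The term $T_v u$ is symmetric, now using $t<\frac{d}{2}$ in place of $s<\frac{d}{2}$.

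The remainder $R\pare{u,v}$ is the delicate term and the only place the sign condition $s+t>0$ enters. Its Fourier support is not separated from the origin, so for fixed $q$ one must sum over all $q'\geqslant q-N$; estimating each piece in $L^2$ via Bernstein from its $L^1$ norm yields $\norm{\Delta_q R\pare{u,v}}_{L^2}\leqslant C\,2^{qd/2}\sum_{q'\geqslant q-N}\norm{\Delta_{q'}u}_{L^2}\norm{\Delta_{q'}v}_{L^2}$. Setting $w_{q'}=\pare{2^{q's}\norm{\Delta_{q'}u}_{L^2}}\pare{2^{q't}\norm{\Delta_{q'}v}_{L^2}}$, Cauchy--Schwarz shows $\pare{w_{q'}}\in\ell^1$ with norm at most $\norm{u}_{\Hs}\norm{v}_{\dot H^t}$, and then $2^{q\pare{s+t-d/2}}\norm{\Delta_q R\pare{u,v}}_{L^2}\leqslant C\sum_{q'\geqslant q-N}2^{\pare{q-q'}\pare{s+t}}w_{q'}$, which is the discrete convolution of $\pare{w_{q'}}$ with the sequence $q''\mapsto 2^{q''\pare{s+t}}$ restricted to $q''\leqslant N$; the latter lies in $\ell^1$ \emph{precisely because $s+t>0$}, so Young's convolution inequality for sequences bounds the left-hand side in $\ell^2$ (indeed in $\ell^1$) by $C\norm{u}_{\Hs}\norm{v}_{\dot H^t}$. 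Combining the three estimates proves the lemma. The main obstacle is exactly this remainder: one must avoid estimating $\Delta_{q'}u\,\Delta_{q'}v$ directly in $L^2$, which would overspend derivatives, and instead pass through $L^1$ and recover the loss with the low-frequency Bernstein gain, the hypothesis $s+t>0$ being what renders the resulting series summable.
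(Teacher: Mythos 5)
Your proof is correct. The paper does not actually prove this lemma --- it invokes it as mathematical folklore and refers to the literature (cf. \cite{gallagher_schochet}, \cite{BCD}) --- and the Bony decomposition you carry out, with the two paraproducts controlled by the hypotheses $s<\frac{d}{2}$ and $t<\frac{d}{2}$ and the remainder made summable precisely by $s+t>0$, is exactly the standard argument supplied by those references.
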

\begin{rem}
There exists a non-homogeneous counterpart of Lemma \ref{lem:Sob_product_rules}. \fine
\end{rem}

Lemma \ref{lem:Sob_product_rules} belongs to the mathematical folklore, and can be stated as well for periodic vector fields, cf. \cite{gallagher_schochet}. Such result is widely used in the \NS theory and goes under the name of \textit{product rules for Sobolev spaces}. All along the paper we will use continuously, even implicitly, the result stated in Lemma \ref{lem:Sob_product_rules}.

\begin{definition}
Let $ X $ be an abstract Banach space and $ T_p : X^p\to X $ a $ p $--linear map onto $ X $. We define
\begin{equation*}
\norm{T_p} = \sup _{\phi_1, \ldots , \phi_p \in B_X\pare{0, 1}} T_p\pare{\phi_1, \ldots , \phi_p}. 
\end{equation*}
\end{definition}

\begin{prop}\label{prop:fixed_point}
Let $ X $ be a Banach space and let $ T_p : X^p \to X, \ p=1,2,3 $ a $ p $-linear map onto $ X $. Suppose there exists an $ \eta\in\pare{0, \frac{1}{4}} $ such that
\begin{equation}\label{eq:hyp_T1}
\norm{T_1}\leqslant \eta, 
\end{equation}
and a positive real number $ r $ such that
\begin{equation} \label{eq:hyp_r}
0< r < \min \set{\frac{1}{8 \norm{T_2}},\ \frac{1}{4\sqrt{\norm{T_3}}}}.
\end{equation}
 For any $ y\in B_X\pare{0, \frac{r}{4}} $, there exist a unique $ x\in B_X\pare{0,  r} $ such that 
\begin{equation*}
x= y + T_1\pare{x} + T_2\pare{x, x} + T_3\pare{x, x, x}. 
\end{equation*}
\end{prop}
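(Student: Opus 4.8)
The plan is to prove Proposition~\ref{prop:fixed_point} by a standard Banach fixed point argument applied to the map
\begin{equation*}
\Phi(x) = y + T_1(x) + T_2(x,x) + T_3(x,x,x)
\end{equation*}
on the closed ball $\overline{B_X(0,r)}$, so the two things I must check are that $\Phi$ maps this ball into itself and that $\Phi$ is a contraction there. First I would estimate, for $x\in \overline{B_X(0,r)}$,
\begin{equation*}
\norm{\Phi(x)} \leqslant \norm{y} + \norm{T_1}\,\norm{x} + \norm{T_2}\,\norm{x}^2 + \norm{T_3}\,\norm{x}^3 \leqslant \frac{r}{4} + \eta r + \norm{T_2} r^2 + \norm{T_3} r^3.
\end{equation*}
Using $\eta < \tfrac14$, $r < \tfrac{1}{8\norm{T_2}}$ (so $\norm{T_2} r^2 < \tfrac{r}{8}$) and $r < \tfrac{1}{4\sqrt{\norm{T_3}}}$ (so $\norm{T_3} r^3 < \tfrac{r}{16}$), the right-hand side is bounded by $r\big(\tfrac14 + \tfrac14 + \tfrac18 + \tfrac{1}{16}\big) < r$, giving the self-map property.

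Next I would verify the contraction estimate. For $x, x' \in \overline{B_X(0,r)}$, writing the differences with the multilinear telescoping identities
\begin{equation*}
T_2(x,x) - T_2(x',x') = T_2(x - x', x) + T_2(x', x - x'),
\end{equation*}
\begin{equation*}
T_3(x,x,x) - T_3(x',x',x') = T_3(x-x',x,x) + T_3(x',x-x',x) + T_3(x',x',x-x'),
\end{equation*}
one obtains
\begin{equation*}
\norm{\Phi(x) - \Phi(x')} \leqslant \Big( \norm{T_1} + 2 \norm{T_2} r + 3 \norm{T_3} r^2 \Big)\norm{x - x'}.
\end{equation*}
Now $\norm{T_1} \leqslant \eta < \tfrac14$, $2\norm{T_2} r < \tfrac14$, and $3\norm{T_3} r^2 < \tfrac{3}{16}$, so the Lipschitz constant is strictly less than $\tfrac14 + \tfrac14 + \tfrac{3}{16} < 1$. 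Hence $\Phi$ is a contraction of the complete metric space $\overline{B_X(0,r)}$ into itself, and the Banach fixed point theorem yields a unique fixed point $x \in \overline{B_X(0,r)}$; since the self-map bound was strict, in fact $x \in B_X(0,r)$, which is the asserted statement.

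I do not expect any genuine obstacle here: the whole content is bookkeeping with the multilinear bounds, and the numerical thresholds in \eqref{eq:hyp_T1} and \eqref{eq:hyp_r} have clearly been chosen precisely so that both the stability estimate and the contraction estimate close with room to spare. The only mild subtlety worth stating carefully is the uniqueness claim: uniqueness is asserted within $B_X(0,r)$, and this is exactly what the contraction mapping theorem on $\overline{B_X(0,r)}$ delivers (the fixed point is unique in the closed ball, hence a fortiori there is at most one in the open ball, and we have shown it lies in the open ball). If one wanted, one could also note the fixed point depends continuously (indeed analytically) on $y$, but that is not needed for the statement.
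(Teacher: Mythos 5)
Your proof is correct and is essentially the same argument as the paper's: the paper writes out the Picard iteration $x_{n+1}=y+T_1(x_n)+T_2(x_n,x_n)+T_3(x_n,x_n,x_n)$ explicitly and shows it is Cauchy using exactly your telescoping identities and the bound $\eta+2r\norm{T_2}+3r^2\norm{T_3}<1$, which is just the Banach fixed point theorem unpacked. The numerical bookkeeping in both the self-map and contraction estimates checks out.
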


\begin{rem}
Let us remark that we assume a smallness hypothesis (contractivity) on the linear operator $ T_1 $. Neglecting such hypothesis compromise irremediably the possibility of finding a fixed point via an iterative argument. \fine
\end{rem}

\begin{proof}
The proof of Proposition \ref{prop:fixed_point} is rather standard. Let us define inductively  the sequence
\begin{equation*}
\left\lbrace
\begin{aligned}
& x_0 =0, \\
& x_{n+1} = y + T_1 \pare{x_n} + T_2\pare{x_n, x_n} + T_3\pare{x_n, x_n, x_n}. 
\end{aligned}
\right.
\end{equation*}
We deduce immediately, thanks to \eqref{eq:hyp_T1} and \eqref{eq:hyp_r} that if $ x_n \in B_X\pare{0, r} $ then
\begin{equation*}
\norm{x_{n+1}}< r. 
\end{equation*}
Next we prove that the sequence $ \pare{x_n}_n $ is a Cauchy sequence in the topology of $ X $, since
\begin{multline*}
x_{n+1}-x_n = T_1\pare{x_n-x_{n-1}} + T_2 \pare{x_n, x_n-x_{n-1} } + T_2 \pare{ x_n-x_{n-1} , x_n }\\
 + T_3 \pare{x_n, x_n, x_n-x_{n-1} } + T_3 \pare{x_n, x_n-x_{n-1} ,  x_n } + T_3 \pare{ x_n-x_{n-1} , x_n ,  x_n }, 
\end{multline*}
we deduce, using the hypothesis \eqref{eq:hyp_T1} and \eqref{eq:hyp_r}
\begin{align*}
\norm{x_{n+1}-x_n} & \leqslant \pare{\Big. \eta + 2r \ \norm{T_2} + 3r^2\ \norm{T_3}}\norm{x_n-x_{n-1}}, \\
& < \frac{3}{4}\ \norm{x_n-x_{n-1}}, 
\end{align*}
which holds for any $ n\geqslant 1 $ and which indeed implies that $ \pare{x_n}_n $ is a Cauchy sequence in the Banach space $ X $, it is hence convergent. In order to prove uniqueness we suppose there exist two different $ x, z \in B_X \pare{0, r} $ so that
\begin{align*}
x & = y + T_1\pare{x} + T_2\pare{x, x} + T_3\pare{x, x, x}, \\
z & = y + T_1\pare{z} + T_2\pare{z, z} + T_3\pare{z, z, z}. 
\end{align*}
We subtract the two equations here above so that we obtain 
\begin{equation*}
x-z = T_1\pare{x-z} + T_2 \pare{x, x-z } + T_2 \pare{ x-z , z }
 + T_3 \pare{x, x, x-z } + T_3 \pare{x, x-z ,  z } + T_3 \pare{ x-z , z ,  z }.
\end{equation*}
Taking norms on the above equality using the triangular inequality and the fact that $ \norm{x}, \norm{z} < r $ we deduce
\begin{align*}
\norm{x-z} & \leqslant \pare{\Big. \eta + 2r \ \norm{T_2} + 3r^2\ \norm{T_3}}\norm{x-z}, \\
& < \frac{3}{4}\ \norm{x-z}, 
\end{align*}
which is obviously satisfied if and only if $ x=z $, concluding. 
\end{proof}

\subsection{Estimates for linear parabolic equations}\label{sec:linear_parabolic_estimates}

In the present section we prove some more or less well-known estimates for linear parabolic equations which will be of the utmost importance in the developement of the paper. \\

Let us consider two functions $ h, g $ defined on $ \bR $, and let us consider a $ T \in \left( 0, \infty \right] $.  We denote $ h\star g = 1_{\bra{0, T}} h \ast 1_{\bra{0, T}} g $ where $ \ast $ is the standard convolution. \\

In this section we will use continuously the \textit{Minkowsky integral inequality} : let us consider $ \pare{S_1, \mu_1} $ and $ \pare{S_2, \mu_2} $ two $ \sigma $--finite measure spaces and let $ f : S_1\times S_2 \to \bR $ be measurable, then the following inequality holds true:
\begin{equation*}
{\displaystyle \left[\int _{S_{2}}\left|\int _{S_{1}}f(x,y)\,\mu _{1}(\mathrm {d} x)\right|^{p}\mu _{2}(\mathrm {d} y)\right]^{\frac {1}{p}}\leqslant \int _{S_{1}}\left(\int _{S_{2}}|f(x,y)|^{p}\,\mu _{2}(\mathrm {d} y)\right)^{\frac {1}{p}}\mu _{1}(\mathrm {d} x).}
\end{equation*}

As an immediate application of the Minkowski integral inequality we can deduce the following result; 

 \begin{lemma} \label{lem:anis_Leb}
Let $1\leqslant p \leqslant p'$ and $f: X_1\times X_2 \to \mathbb{R}$ a function belonging to $L^p\left( X_1; L^{p'}\left( X_2\right) \right)$ where $\left( X_1; \mu_1\right),\left(X_2;\mu_2\right)$ are measurable spaces, then $f\in L^{p'}\left( X_2; L^p\left( X_1 \right)\right)$ and we have the inequality
$$
\left\|f\right\|_{L^{p'}\left( X_2; L^p\left( X_1 \right)\right)}\leqslant \left\|f\right\|_{L^p\left( X_1; L^{p'}\left( X_2\right) \right)}.
$$
\end{lemma}

Let us now consider the linear parabolic system with damping

\begin{equation}
\label{eq:parabolic_linear}
\left\lbrace
\begin{aligned}
& \partial_t w + \gamma w - \mu  \Delta w = F, \\
& \left. w\right|_{t=0} = w_0 . 
\end{aligned}
\right. 
\end{equation}

\noindent The estimates that we prove in this section are in particular focused to show \textit{quantitative} smoothing effects on the solutions of \eqref{eq:parabolic_linear} in terms of the parameters $ \gamma $ and $ \mu $. \\

The following result is classical, we refer to \cite[Lemma 5.10, p. 210]{BCD}:

\begin{lemma}\label{lem:parabolic1}
Let $ w $ be the unique solution of \eqref{eq:parabolic_linear} in $ \cC \pare{ \bra{0, T} ;  \cS' \pare{\bR^d}} $ of the Cauchy problem \eqref{eq:parabolic_linear} when $ \gamma \geqslant 0 $ with $ F \in L^2 \pare{ \bra{0, T} ;  \dot{H}^{s-1} \pare{\bR^d}} $ and let $ w_0\in \Hs \pare{\bR^d} $. Then for each $ t\in \bra{0, T} $
\begin{align*}
\norm{w\pare{t}}_{\Hs \pare{\bR^d}} ^2 + \mu \int_0^t \norm{\nabla w \pare{t'}}_{\Hs\pare{\bR^d}}^2 \d t' \leqslant \norm{w_0}_{\Hs\pare{\bR^d}}^2 + \frac{C}{\mu} \norm{F}_{L^2_T\dot{H}^{s-1}\pare{\bR^d}}^2, \\
\norm{w}_{L^4_T \dot{H}^{s+\frac{1}{2}}\pare{\bR^d}} \leqslant \frac{C}{\mu ^{ {1}/{4}}} \pare{ \norm{w_0}_{\Hs\pare{\bR^d}} + \frac{1}{\mu ^{ {1}/{2}}} \norm{F}_{L^2_T\dot{H}^{s-1}\pare{\bR^d}} } .
\end{align*}
\end{lemma}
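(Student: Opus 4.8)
The plan is to reduce everything to the classical heat-semigroup estimates by exploiting the fact that the damping term $\gamma w$ with $\gamma\geqslant 0$ can only help. Concretely, write the Duhamel formula for \eqref{eq:parabolic_linear},
\begin{equation*}
w(t) = e^{-\gamma t} e^{\mu t \Delta} w_0 + \int_0^t e^{-\gamma (t-t')} e^{\mu (t-t') \Delta} F(t')\, \d t',
\end{equation*}
and take the $\dot H^s$ norm, i.e. work on the Fourier side with the multiplier $e^{-\gamma t - \mu t |\xi|^2}$. First I would treat the energy estimate: multiply the equation by $w$ in $\dot H^s$, integrate in space, and note that $\gamma \|w(t')\|_{\dot H^s}^2 \geqslant 0$ so it can be dropped from the left-hand side after integrating in time; what remains is exactly the standard inequality whose proof is in \cite[Lemma 5.10]{BCD}, yielding $\|w(t)\|_{\dot H^s}^2 + \mu \int_0^t \|\nabla w(t')\|_{\dot H^s}^2 \d t' \leqslant \|w_0\|_{\dot H^s}^2 + \frac{C}{\mu}\|F\|_{L^2_T \dot H^{s-1}}^2$, where the $\mu^{-1}$ comes from a Young inequality absorbing $\|\nabla w\|$ against $\|F\|_{\dot H^{s-1}}$.

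For the $L^4_T \dot H^{s+1/2}$ bound I would split into the free-evolution part and the Duhamel part. For the free part, on the Fourier side one needs $\big\||\xi|^{s+1/2} e^{-\gamma t}e^{-\mu t|\xi|^2}\hat w_0(\xi)\big\|_{L^4_t L^2_\xi}$; again $e^{-\gamma t}\leqslant 1$, so this is dominated by the undamped case, and the elementary computation $\int_0^\infty |\xi| e^{-4\mu t |\xi|^2}\d t = \frac{1}{4\mu |\xi|}$ gives $\big\||\xi|^{1/2} e^{-\mu t|\xi|^2}\big\|_{L^4_t} \leqslant C \mu^{-1/4} |\xi|^{-s}\cdot|\xi|^{s}$, producing the factor $\mu^{-1/4}\|w_0\|_{\dot H^s}$. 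For the forcing term, apply the Minkowski integral inequality (stated just above in the excerpt) to pull the $L^4_t$ norm inside the $\d t'$ integral, bound $e^{-\gamma(t-t')}\leqslant 1$, and use the same $L^4_t$ kernel estimate together with the fact that the convolution in time against an $L^1_{t}$-type kernel (of size $\mu^{-3/4}$ in the relevant norm, gaining an extra $\mu^{-1/2}$ compared to the $w_0$ term because $F\in\dot H^{s-1}$ rather than $\dot H^s$) maps $L^2_t$ to $L^4_t$; this yields $\frac{C}{\mu^{3/4}}\|F\|_{L^2_T\dot H^{s-1}}$, which combined gives the claimed inequality.

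Since the statement explicitly says the result is classical and cites \cite[Lemma 5.10, p.~210]{BCD} for the case $\gamma\geqslant 0$, in practice the ``proof'' is little more than the observation that the damping is a good (nonnegative) term and therefore every step of the undamped argument goes through verbatim; I would simply recall the Fourier-side kernel computations above and invoke the reference. The only mild subtlety — and the one place worth a sentence of care — is making sure the $\gamma\geqslant 0$ hypothesis is genuinely used only through the harmless bound $e^{-\gamma t}\leqslant 1$, so that none of the constants $C$ degenerate and the estimates are in particular uniform as $\gamma\to 0^+$ (which matters since later $\gamma = \tau^{-1}$ or $\gamma=(1+\chi_0)/\tau$ will be large, and uniformity in $\gamma$ is exactly what one needs). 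The main ``obstacle'', such as it is, is purely bookkeeping: tracking the powers of $\mu$ through the time-convolution estimate so that the forcing contribution lands on $\mu^{-3/4}$ and not some other exponent; everything else is a direct transcription of the standard heat-estimate proof.
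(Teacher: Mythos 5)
Your proposal is correct, but note that the paper offers no proof of this lemma at all: it simply declares the result classical and cites \cite[Lemma 5.10, p.~210]{BCD}, so there is nothing to match step by step. Your energy estimate for the first inequality (drop the nonnegative term $\gamma\norm{w}_{\Hs}^2$, absorb $\norm{F}_{\dot H^{s-1}}\norm{\nabla w}_{\Hs}$ by Young) is exactly the standard argument and is fine. For the $L^4_T\dot H^{s+1/2}$ bound you take the Duhamel/Fourier route: free part via Minkowski plus the kernel computation $\int_0^\infty e^{-4\mu t|\xi|^2}\,\d t=C\mu^{-1}|\xi|^{-2}$, forcing part via Young's convolution inequality in time. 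This works, but be precise at the Young step: the kernel $|\xi|^{3/2}e^{-\mu t|\xi|^2}$ maps $L^2_t$ to $L^4_t$ because its $L^{4/3}_t$ norm is $C\mu^{-3/4}$ (uniformly in $\xi$), via $L^{4/3}\ast L^2\subset L^4$ — calling it an ``$L^1_t$-type kernel'' is the one loose spot, since an $L^1$ kernel would only give $L^2\to L^2$. A slightly shorter self-contained route, and the one the paper actually uses when it proves the analogous damped estimate in Lemma \ref{lem:parabolic2} (see \eqref{eq:interpolation1}--\eqref{eq:interpolation3}), is to observe that the first inequality already gives $\norm{w}_{L^\infty_T\Hs}\leqslant \norm{w_0}_{\Hs}+C\mu^{-1/2}\norm{F}_{L^2_T\dot H^{s-1}}$ and $\norm{w}_{L^2_T\dot H^{s+1}}\leqslant \mu^{-1/2}(\norm{w_0}_{\Hs}+C\mu^{-1/2}\norm{F}_{L^2_T\dot H^{s-1}})$, and then interpolate, $\norm{w}_{L^4_T\dot H^{s+1/2}}\leqslant\norm{w}_{L^\infty_T\Hs}^{1/2}\norm{w}_{L^2_T\dot H^{s+1}}^{1/2}$, which lands directly on the stated bound with no semigroup computation. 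Either way the constants are uniform in $\gamma\geqslant 0$, as you correctly emphasize.
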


For our purposes we will require the bulk force $ F $ appearing in \eqref{eq:parabolic_linear} to be in $ L^{ {4}/{3}}_T L^2 $, whence  Lemma \ref{lem:parabolic1} will not suffice in our context.  

\begin{lemma}\label{lem:parabolic2}
Let $ q\in\bra{1, 2}, \ T\in \left( 0, \infty\right] $ and let us define \begin{equation*}
s_q = 2\pare{ 1 -\frac{1}{q}}\ \in\bra{0, 1}, 
\end{equation*}
and let us suppose 
  $ F \in L^{ q} \pare{ \bra{0, T} ; \dot{H}^{s-s_q} \pare{\bR^d}} $ and let $ w_0\in \Hs \pare{\bR^d}\cap \dot{H}^{s+\frac{1}{2}} \pare{\bR^d} $.
Let us denote with $ w $ be the unique solution of \eqref{eq:parabolic_linear} in $ \cC \pare{ \bra{0, T} ;  \cS' \pare{\bR^d}} $ of the Cauchy problem \eqref{eq:parabolic_linear} when $ \gamma > 0 $.
 Then
\begin{equation*}
\norm{w}_{L^4_T\dot{H}^{s+\frac{1}{2}}\pare{\bR^d}} \leqslant C \bra{  \min \set{ \frac{\norm{w_0}_{\dot{H}^{s+\frac{1}{2}}\pare{\bR^d}}}{\gamma^{ {1}/{4}}}, \ \frac{\norm{w_0}_{\Hs}\pare{\bR^d}}{\mu^{1/4}}  }  + \frac{1}{\mu^{3/4}} \ \norm{F}_{L^{q}_T\dot{H}^{s-s_q}\pare{\bR^d}} }. 
\end{equation*}
\end{lemma}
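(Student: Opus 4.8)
The plan is to solve \eqref{eq:parabolic_linear} via Duhamel's formula and split the solution into its linear (homogeneous data) part and its forced part, estimating each separately in $L^4_T\dot H^{s+1/2}$. Writing $w = w_{\mathrm{lin}} + w_{\mathrm{for}}$ with
\begin{equation*}
w_{\mathrm{lin}}(t) = e^{-\gamma t} e^{\mu t\Delta} w_0, \qquad w_{\mathrm{for}}(t) = \int_0^t e^{-\gamma(t-t')} e^{\mu(t-t')\Delta} F(t')\,\d t',
\end{equation*}
the task reduces to two independent bounds.

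For the linear part, the key is the extra damping factor $e^{-\gamma t}$, which is absent from the classical Lemma \ref{lem:parabolic1}. First I would reprove the $L^4_T\dot H^{s+1/2}$ bound on $e^{\mu t\Delta}w_0$ by the usual Fourier-side computation (the $L^4_t$ norm in time of $|\xi|^{1/2}e^{-\mu t|\xi|^2}$ produces the factor $\mu^{-1/4}$), which gives the $\mu^{-1/4}\norm{w_0}_{\Hs}$ alternative directly. To get the competing bound $\gamma^{-1/4}\norm{w_0}_{\dot H^{s+1/2}}$, I would instead put all the decay on the damping: estimate $|\xi|^{s+1/2}|\hat w_{\mathrm{lin}}(\xi,t)| \le |\xi|^{s+1/2}|\hat w_0(\xi)| e^{-\gamma t}$ pointwise in $\xi$ (discarding the heat semigroup, which only helps), then $\norm{e^{-\gamma\cdot}}_{L^4(\bR_+)} = (4\gamma)^{-1/4}$ pulls out $\gamma^{-1/4}$, and Plancherel finishes it. Taking the minimum of the two estimates yields the first term.

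For the forced part, the point is that $F$ is only assumed in $L^q_T\dot H^{s-s_q}$ with $q\in[1,2]$, so one cannot simply invoke Lemma \ref{lem:parabolic1}; this is where I expect the real work to be, and where the Minkowski integral inequality (equivalently Lemma \ref{lem:anis_Leb}) and a convolution/Young argument in time enter. On the Fourier side, $|\xi|^{s+1/2}|\hat w_{\mathrm{for}}(\xi,t)|$ is bounded by the convolution in $t$ of the kernel $k_\xi(t) = |\xi|^{s_q+1/2} e^{-\mu t|\xi|^2}$ (the damping $e^{-\gamma t}$ is again discarded, $\gamma>0$) against $|\xi|^{s-s_q}|\hat F(\xi,t)|$. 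I would take the $L^4_t$ norm, apply Young's convolution inequality in time with exponents $\tfrac14 + 1 = \tfrac1p + \tfrac1q$ where $\tfrac1p = \tfrac34 + \tfrac1q - 1$ — this is exactly the exponent bookkeeping that forces the shift $s_q = 2(1-\tfrac1q)$ and the power $\mu^{-3/4}$, since $\norm{k_\xi}_{L^p_t} \sim |\xi|^{s_q+1/2} (\mu|\xi|^2)^{-1/p} = \mu^{-1/p}|\xi|^{s_q+1/2-2/p} = \mu^{-1/p}|\xi|^{1/2}$ by the choice of $s_q$ — and then use Minkowski to exchange the $L^4_t$ and $L^2_\xi$ norms so that the $L^q_t\dot H^{s-s_q}$ norm of $F$ appears. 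One checks $\tfrac1p \le \tfrac34$ so $\mu^{-1/p}$ is controlled by $\mu^{-3/4}$ on the relevant frequency range (or one simply keeps $\mu^{-3/4}$ as the stated, possibly non-sharp, constant). Collecting the two parts gives the claimed inequality.

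The main obstacle is organizing the forced-term estimate cleanly: one must be careful that the Young exponent $p$ is admissible (i.e. $p\ge 1$, which holds precisely because $q\le 2$), that the frequency power coming out of $\norm{k_\xi}_{L^p_t}$ is exactly $|\xi|^{1/2}$ so that it pairs with $\dot H^s \to \dot H^{s+1/2}$, and that the order of applying Young-in-time versus Minkowski-in-frequency is consistent (Minkowski is needed because we want $L^4_t$ outside and $L^2_\xi$ inside for the output, but $F$ is given with $L^q_t$ outside). None of the individual steps is deep, but the interplay of the three exponents $q$, $4$, and the heat-kernel scaling is where an error would most easily creep in.
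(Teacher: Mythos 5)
Your treatment of the homogeneous part coincides with the paper's: the direct Fourier-side computation, using $\int_0^T e^{-4t\pare{\gamma+\mu\av{\xi}^2}}\d t\leqslant \pare{4\pare{\gamma+\mu\av{\xi}^2}}^{-1}$ and discarding whichever of the two terms is not needed, is exactly the estimate \eqref{eq:bound_w0_L4Hs} and yields the stated minimum. For the forced part, however, you take a genuinely different route. The paper never uses Young's convolution inequality in time: it combines an $\Hs$ energy estimate (giving $\norm{w}_{L^2_t\dot{H}^{s+1}}\leqslant C\mu^{-1/2}\pare{\norm{w_0}_{\Hs}+\mu^{-1/2}\norm{F}_{L^q_T\dot{H}^{s-s_q}}}$ after absorbing the force by interpolation and Young's numerical inequality) with a pointwise-in-$\xi$ H\"older bound on the Duhamel integral (giving the same bound for $\norm{w}_{L^\infty_t\Hs}$, via Lemma \ref{lem:anis_Leb}), and then interpolates $L^4_t\dot{H}^{s+\frac12}$ between $L^\infty_t\dot{H}^{s}$ and $L^2_t\dot{H}^{s+1}$; this produces the uniform $\mu^{-3/4}$ for every $q\in\bra{1,2}$. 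Your Young-plus-Minkowski scheme is more direct and can be made to work, but it is a different argument.

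As written, though, your exponent bookkeeping does not close. The Young identity $\frac14+1=\frac1p+\frac1q$ forces $\frac1p=\frac54-\frac1q$, which is incompatible with your formula $\frac1p=\frac34+\frac1q-1$ except at $q=\frac43$. With the correct $p$ one finds
\begin{equation*}
\norm{k_\xi}_{L^p_t}\leqslant C_q\,\mu^{-1/p}\av{\xi}^{s_q+\frac12-\frac2p}=C_q\,\mu^{-1/p},
\end{equation*}
i.e.\ the residual frequency power is $\av{\xi}^{0}$, not $\av{\xi}^{1/2}$ — and it \emph{must} vanish: a leftover $\av{\xi}^{1/2}$ multiplying $\av{\xi}^{s-s_q}\av{\hat F}$ would, after the $L^2_\xi$ integration, produce $\norm{F}_{L^q_T\dot{H}^{s-s_q+\frac12}}$ rather than the hypothesized $\norm{F}_{L^q_T\dot{H}^{s-s_q}}$. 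Moreover the power of $\mu$ you obtain is $\mu^{-1/p}=\mu^{-\pare{\frac54-\frac1q}}$, which equals $\mu^{-3/4}$ only at $q=2$; since $\frac34-\frac1p=\frac1q-\frac12\geqslant 0$, the comparison $\mu^{-1/p}\leqslant C\mu^{-3/4}$ holds only for $\mu$ bounded above (your "one checks $1/p\leqslant 3/4$" runs in the wrong direction for $\mu>1$), whereas the paper's interpolation argument gives $\mu^{-3/4}$ for all $\mu>0$ and all $q\in\bra{1,2}$. These are repairable arithmetic slips rather than a failure of the method, but the proof as proposed does not yet yield the stated inequality.
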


\begin{proof}
Let us perform a $ \Hs\pare{\bR^d} $ estimate onto \eqref{eq:parabolic_linear}. We deduce the energy inequality

\begin{align*}
\frac{1}{2}\frac{\d}{\d t}\norm{w\pare{t}}_{\Hs}^2 + \gamma \norm{w\pare{t}}_{\Hs}^2  + \mu \norm{ \nabla w}_{\Hs}^2  
&\leqslant \av{\ps{F\pare{t}}{w\pare{t}}_{\Hs}}, \\
&\leqslant  \norm{F\pare{t}}_{\dot{H}^{s-s_q}}\norm{w\pare{t}}_{\dot{H}^{s+s_q}}. 
\end{align*}

\noindent Integrating the above relation in $ \bra{0, t}, \ t\in\bra{0, T} $ we deduce the inequality
\begin{equation}\label{eq:LPE1}
\frac{1}{2}\sup _{t'\in\pare{0, t}}\set{\norm{w\pare{t'}}_{\Hs}^2} + \gamma \int_0^t \norm{w\pare{t'}}_{\Hs}^2 \d t'  + \mu \int_0^t \norm{ \nabla w\pare{t'}}_{\Hs}^2 \d t' \leqslant \frac{1}{2}\norm{w_0}_{\Hs}^2 + \norm{F}_{L^{q}_t \dot{H}^{s-s_q}} \norm{w}_{L^{\frac{q}{q-1}}_t\dot{H}^{s+s_q}}. 
\end{equation}

\noindent A standard interpolation of Sobolev spaces implies that
\begin{equation*}
\norm{w}_{L^{\frac{q}{q-1}}_t\dot{H}^{s+s_q}} \leqslant \norm{w}_{L^\infty_t\Hs}^{\frac{2-q}{q}}\norm{\nabla w}_{L^2_t\Hs}^{\frac{2}{q}\pare{q-1}}, 
\end{equation*}
whence using the inequality
\begin{align*}
\norm{F}_{L^{q}_t \dot{H}^{s-s_q}} \norm{w}_{L^{\frac{q}{q-1}}_t\dot{H}^{s+s_q}} & 
\leqslant \norm{F}_{L^{q}_t \dot{H}^{s-s_q}}   \norm{w}_{L^\infty_t\Hs}^{\frac{2-q}{q}}\norm{\nabla w}_{L^2_t\Hs}^{\frac{2}{q}\pare{q-1}}, \\
&\leqslant \frac{1}{4} \norm{w}_{L^\infty_t\Hs}^{2} + \frac{3\mu}{4} \norm{w}_{L^2_t\Hs}^2 +\frac{C}{\mu} \norm{F}_{L^{q}_t \dot{H}^{s-s_q}}^2, 
\end{align*}
which inserted in \eqref{eq:LPE1} gives
\begin{equation*}
\frac{1}{4}\sup _{t'\in\pare{0, t}}\set{\norm{w\pare{t}}_{\Hs}^2} + {\gamma }\int_0^t \norm{w\pare{t'}}_{\Hs}^2 \d t'  + \frac{\mu}{4} \int_0^t \norm{ \nabla w\pare{t'}}_{\Hs}^2 \d t'\leqslant \frac{1}{2}\norm{w_0}_{\Hs}^2 + \frac{C}{\mu} \norm{F}_{L^{q}_t \dot{H}^{s-s_q}}^2. 
\end{equation*}

\noindent The above equation in particular implies that
\begin{equation}
\label{eq:interpolation1}
\begin{aligned}
\norm{w}_{L^2_t \dot{H}^{s+1}} & \leqslant \frac{C}{\mu ^{1/2}} \pare{
\norm{w_0}_{\Hs} + \frac{1}{\mu ^{1/2}} \norm{F}_{L^{q}_t \dot{H}^{s-s_q}}
}, \\
\norm{w}_{L^2_t \dot{H}^{s}} & \leqslant \frac{C}{\gamma ^{1/2}} \pare{
\norm{w_0}_{\Hs} + \frac{1}{\mu ^{1/2}} \norm{F}_{L^{q}_t \dot{H}^{s-s_q}}
}. 
\end{aligned}
\end{equation}


Let us now denote 
\begin{equation} \label{eq:propagator_Sgammamu}
S_{\gamma, \mu} \pare{ \partial, t} g \pare{x} = e^{-t\pare{\gamma - \mu \Delta}} g \pare{x}.
\end{equation}
Indeed the solution of equation \eqref{eq:parabolic_linear} can be expressed in terms of the evolution semigroup $ S_{\gamma, \mu} $ as 
\begin{equation}\label{eq:w_mild}
\hat{w}\pare{\xi, t} = S_{\gamma, \mu} \pare{ \xi, t} \hat{w}_0 \pare{\xi} + \int_0^t S_{\gamma, \mu} \pare{ \xi, t-t'} \hat{F}\pare{\xi , t'} \d t'. 
\end{equation}

\noindent An application of H\"older inequality give us the estimate, for $ t\in\bra{0, T} $
\begin{equation*}
\sup _{t'\in\pare{0, t}}\av{\hat{w}\pare{\xi, t'}} \leqslant \av{\hat{w}_0\pare{\xi}} + \frac{C_q}{\pare{\gamma + \mu\av{\xi}^2}^{\frac{q-1}{q}}}\ \norm{\hat{F}\pare{\xi, \cdot}}_{L^{q}\pare{\bra{0, t}}}, 
\end{equation*}
whence an $ L^2\pare{\bR^d, \av{\xi}^{2s}\d \xi} $ estimate on the above inequality allow us to deduce
\begin{equation}
\label{eq:LPE2}
\begin{aligned}
V\pare{t} & \overset{\text{def}}{=} \pare{\int _{\bR^d}\av{\xi}^{2s} \pare{\sup _{t'\in\pare{0, t}}\av{\hat{w}\pare{\xi, t'}}}\d \xi}^{1/2}, \\
& \leqslant \norm{w_0}_{\Hs} + \left( \int _{\bR^d} \frac{\av{\xi}^{2s}}{\pare{\gamma + \mu\av{\xi}^2}^{s_q}}\ \norm{\hat{F}\pare{\xi, \cdot}}_{L^{q}\pare{\bra{0, t}}}^2 \d \xi \right)^{1/2}
\end{aligned}
\end{equation}

\noindent Whence we remark that
\begin{align*}
\pare{
 \int _{\bR^d} \frac{\av{\xi}^{2s}}{2\pare{\gamma + \mu\av{\xi}^2}^{s_q}} \norm{\hat{F}\pare{\xi, \cdot}}^2_{L^{ q}_T} \d \xi
 }^{ {1}/{2}}
 &\leqslant \mu ^{-1/2}
 \pare{
 \int _{\bR^d} {\av{\xi}^{2\pare{s-s_q}}} \norm{\hat{F}\pare{\xi, \cdot}}^2_{L^{ q}_T} \d \xi
 }^{ {1}/{2}}\\
&  = \mu ^{-1/2} \norm{\hat{F}}_{L^2\pare{\bR^d, \  \av{\xi}^{\pare{s-s_q }} \d \xi ; \ L^{q}\pare{\bra{0, T}}}}. 
\end{align*}

\noindent We use hence Lemma \ref{lem:anis_Leb} with $ p=q $, $  \mu_2 \pare{\d \xi} = \av{\xi}^{\pare{s-s_q}} \d \xi $ and $ p'=2 $ to deduce that
\begin{equation}
\label{eq:LPE3}
\norm{\hat{F}}_{L^2\pare{\bR^d, \  \av{\xi}^{\pare{s-s_q}} \d \xi ; \ L^{q}\pare{\bra{0, T}}}} \leqslant \norm{F}_{L^{q}_T \dot{H}^{s-s_q}\pare{\bR^d} }, 
\end{equation}
and we use again Lemma \ref{lem:anis_Leb} in order to deduce 
\begin{equation}
\label{eq:LPE4}
\norm{w}_{L^\infty_t\Hs}\leqslant V\pare{t}.
\end{equation}

\noindent
Inserting the estimates \eqref{eq:LPE3} and \eqref{eq:LPE4} in \eqref{eq:LPE2} we deduce
\begin{equation}\label{eq:interpolation2}
\norm{w}_{L^\infty_t\Hs} \leqslant C_q \left( \norm{w_0}_{\Hs} + \frac{1}{\mu ^{1/2}} \norm{F}_{L^{q}_T \dot{H}^{s-s_q}\pare{\bR^d} } \right)
\end{equation}

Interpolating equation \eqref{eq:interpolation1} and \eqref{eq:interpolation2} we deduce that, for any $ t\in\bra{0, T} $
\begin{equation}\label{eq:interpolation3}
\begin{aligned}
\norm{w}_{L^p_t \dot{H}^{s+\frac{2}{p}}} & \leqslant \frac{C_q}{\mu ^{1/p}} \pare{
\norm{w_0}_{\Hs} + \frac{1}{\mu ^{1/2}} \norm{F}_{_{L^{q}_T \dot{H}^{s-s_q}\pare{\bR^d} }}
}, \\
\norm{w}_{L^p_t \dot{H}^{s}} & \leqslant \frac{C_q}{\gamma ^{1/p}} \pare{
\norm{w_0}_{\Hs} + \frac{1}{\mu ^{1/2}} \norm{F}_{_{L^{q}_T \dot{H}^{s-s_q}\pare{\bR^d} }}
}. 
\end{aligned}
\end{equation}

\noindent Setting $ p=4 $ in the first equation of \eqref{eq:interpolation3} we almost obtain the claim, what remains to be proved is the decaying effects on the initial data. Using Minkowski integral inequality and standard computations

\begin{equation}
\label{eq:bound_w0_L4Hs}
\begin{aligned}
\norm{S_{\gamma, \mu}\pare{\partial} w_0}_{L^4_T\dot{H}^{s+\frac{1}{2}}\pare{\bR^d}} & =  \pare{ \int_0^T \pare{\int_{\bR^d} \av{\xi}^{2s+1} e^{-2t\pare{\gamma + \mu \av{\xi}^2 }} \av{\hat{w}_0\pare{\xi}}^2 \d\xi}^2 \d t }^{ {1}/{4}} , \\
& \leqslant \pare{ \int_{\bR^d} \av{\xi}^{2s+1} \av{\hat{w}_0\pare{\xi}}^{2}\pare{ \int_0^T 
e^{-4t\pare{\gamma + \mu \av{\xi}^2 }}
}^{ {1}/{2}}  \d \xi }^{ {1}/{2}}, \\
& \leqslant \pare{ \int_{\bR^d} \frac{\av{\xi}^{2s+1}}{2\sqrt{\gamma + \mu\av{\xi}^2}} \av{\hat{w}_0\pare{\xi}}^{2} \d \xi }^{ {1}/{2}}, \\
& \leqslant C \ \min \set{ \frac{\norm{w_0}_{\dot{H}^{s+\frac{1}{2} \pare{\bR^d}}}}{\gamma^{ {1}/{4}}}, \frac{\norm{w_0}_{\dot{H}^{s}\pare{\bR^d}}}{\mu^{1/4}}  } . 
\end{aligned}
\end{equation}

%
%

\end{proof}

The next lemma describes the regularity of the solutions of  \eqref{eq:parabolic_linear} in the case in which the external force is in $ \LqHu $, whence we focus on the regularity induced by the damping $ \gamma w $ and we do not consider any space-smoothing effect induced by the heat propagator:

\begin{lemma}\label{lem:smoothin_bulk_force}
Let $ w_0\in \Hs  $ and let $ F\in L^2_T \Hs$, then $ w $ solution of \eqref{eq:parabolic_linear} is such that
\begin{equation*}
\norm{w}_{L^4_T \Hs\pare{\bR^d}}\leqslant \ C \pare{ \frac{1}{\gamma^{ {1}/{4}}} \ \norm{w_0}_{\Hs\pare{\bR^d}} + \frac{1}{\gamma^{3/4}} \norm{F}_{L^2_T \Hs\pare{\bR^d}} }
\end{equation*}
\end{lemma}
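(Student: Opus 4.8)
The statement concerns only the damping-in-time smoothing, with no spatial gain in derivatives, so I would work purely on the frequency side with the semigroup representation \eqref{eq:w_mild}: for each fixed $\xi$,
\begin{equation*}
\hat w(\xi,t) = e^{-t(\gamma+\mu|\xi|^2)}\hat w_0(\xi) + \int_0^t e^{-(t-t')(\gamma+\mu|\xi|^2)}\hat F(\xi,t')\,\d t'.
\end{equation*}
Since we only want to keep $\Hs$ regularity, I would simply throw away the $\mu|\xi|^2$ in the exponent when it is convenient (it only helps), writing $e^{-t(\gamma+\mu|\xi|^2)}\leqslant e^{-t\gamma}$. The target norm is $\|w\|_{L^4_T\Hs}$, i.e. the $L^4$-in-time, $L^2(\bR^d,|\xi|^{2s}\d\xi)$-in-space norm of $\hat w$. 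The natural order is: (i) bound the contribution of the initial datum; (ii) bound the contribution of the forcing via a convolution (Young) estimate in time; (iii) assemble.

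\textbf{Step (i): the initial datum.} The free evolution contributes $|\xi|^s e^{-t\gamma}|\hat w_0(\xi)|$. Taking the $L^4_t$ norm first, $\bigl(\int_0^T e^{-4t\gamma}\d t\bigr)^{1/4}\leqslant (4\gamma)^{-1/4}$, and then the $L^2$ norm in $\xi$, I get
\begin{equation*}
\|S_{\gamma,\mu}(\partial)w_0\|_{L^4_T\Hs} \leqslant \frac{C}{\gamma^{1/4}}\,\|w_0\|_{\Hs},
\end{equation*}
which is exactly the first term on the right-hand side. (One may alternatively mimic the computation \eqref{eq:bound_w0_L4Hs} with $s+\tfrac12$ replaced by $s$ and dropping the $\mu|\xi|^2$; the outcome is the same.)

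\textbf{Step (ii): the forcing term.} For the Duhamel term, at fixed $\xi$ the map $t\mapsto\int_0^t e^{-(t-t')(\gamma+\mu|\xi|^2)}\hat F(\xi,t')\,\d t'$ is $\bigl(1_{[0,T]}e^{-\cdot(\gamma+\mu|\xi|^2)}\bigr)\star\bigl(1_{[0,T]}\hat F(\xi,\cdot)\bigr)$, so by Young's convolution inequality with exponents $1+\tfrac14=\tfrac12+\tfrac34$, i.e. $L^{4/3}\ast L^2\hookrightarrow L^4$,
\begin{equation*}
\Bigl\|\int_0^{\cdot}e^{-(\cdot-t')(\gamma+\mu|\xi|^2)}\hat F(\xi,t')\d t'\Bigr\|_{L^4([0,T])}
\leqslant \Bigl\|e^{-\cdot\,\gamma}\Bigr\|_{L^{4/3}([0,T])}\,\|\hat F(\xi,\cdot)\|_{L^2([0,T])}
\leqslant \frac{C}{\gamma^{3/4}}\,\|\hat F(\xi,\cdot)\|_{L^2([0,T])},
\end{equation*}
using $\|e^{-\cdot\gamma}\|_{L^{4/3}(\bR_+)}=(3/(4\gamma))^{3/4}$. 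Now multiply by $|\xi|^s$, take the $L^2(\d\xi)$ norm, and swap the order of integration (Minkowski's integral inequality, as in Lemma \ref{lem:anis_Leb}, since $2\geqslant 4/3$ — more precisely one first lands in $L^2_\xi(L^4_T)$ and then uses $L^2_\xi L^4_T\hookleftarrow L^4_T L^2_\xi$): this gives $\tfrac{C}{\gamma^{3/4}}\|F\|_{L^2_T\Hs}$.

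\textbf{Step (iii) and the main obstacle.} Adding the two contributions via the triangle inequality in $L^4_T\Hs$ yields exactly the claimed inequality. There is really no serious obstacle here — the only points requiring a little care are the bookkeeping of the time-integrability exponents in Young's inequality ($L^1$ kernel against $L^2$ data is not enough to reach $L^4$; one genuinely needs the kernel in $L^{4/3}$, which the exponential decay supplies with the advertised $\gamma^{-3/4}$) and the order-of-integration swap, which is legitimate because $2\geqslant 4/3$ and is precisely the content of Lemma \ref{lem:anis_Leb}. Everything else is the routine semigroup-plus-Young argument.
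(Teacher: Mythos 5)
Your proof is correct, but it takes a genuinely different route from the paper's. The paper does not use Young's convolution inequality at all here: it first derives an $L^\infty_t\Hs$ bound by the same fixed-frequency H\"older/Minkowski argument that produced \eqref{eq:LPE2} (with $q=2$, keeping only the $\gamma$ in the symbol $\gamma+\mu\av{\xi}^2$ so as to gain $\gamma^{-1/2}$ on the forcing), then an $L^2_t\Hs$ bound with a factor $\gamma^{-1/2}$ by a plain $\Hs$ energy estimate as in \eqref{eq:interpolation1}, and concludes via the interpolation $\norm{w}_{L^4_t\Hs}\leqslant\norm{w}_{L^\infty_t\Hs}^{1/2}\norm{w}_{L^2_t\Hs}^{1/2}$, which yields precisely the exponents $\gamma^{-1/4}$ and $\gamma^{-3/4}$. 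Your route — Duhamel at fixed $\xi$, the Young inequality $L^{4/3}\ast L^2\hra L^4$ in time with $\norm{e^{-\gamma\,\cdot}}_{L^{4/3}\pare{\bR_+}}\sim\gamma^{-3/4}$, then the $L^2_\xi$ norm and an exchange of integration order — is more direct and self-contained, whereas the paper's buys economy by recycling the two estimates it has already set up for Lemma \ref{lem:parabolic2}. One small correction to your write-up: the Minkowski step you need is $L^2_\xi L^4_T\hra L^4_T L^2_\xi$, valid because $2\leqslant 4$ (Lemma \ref{lem:anis_Leb} with $p=2$, $p'=4$, $X_1=\bR^d_\xi$, $X_2=\bra{0,T}$); your parenthetical writes the embedding with the arrow reversed and justifies it by the irrelevant comparison $2\geqslant 4/3$, which concerns the Young exponents rather than the order exchange. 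The embedding you actually use is true and is exactly the content of Lemma \ref{lem:anis_Leb}, so the argument stands as written once that slip is fixed.
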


\begin{proof}
The present proof is a slight modification of the proof of Lemma \ref{lem:parabolic2}. \\

In the same way we deduced \eqref{eq:LPE2} we can argue that (here we set $ q=2 $ and $ t\in\bra{0, T} $) 
\begin{align*}
\norm{w}_{L^\infty_t \Hs}
 & \leqslant \norm{w_0}_{\Hs} + \left( \int _{\bR^d} \frac{\av{\xi}^{2s}}{{\gamma + \mu\av{\xi}^2}}\ \norm{\hat{F}\pare{\xi, \cdot}}_{L^{2}\pare{\bra{0, t}}}^2 \d \xi \right)^{1/2}, \\
 & \leqslant \norm{w_0}_{\Hs} +\frac{1}{\gamma^{1/2}}\norm{F}_{L^2_T \Hs}. 
\end{align*}

Performing a $ \Hs $ energy estimate on \eqref{eq:parabolic_linear} we deduce an estimate similar to \eqref{eq:interpolation1};
\begin{equation*}
\norm{w}_{L^2_t \dot{H}^{s}}  \leqslant \frac{C}{\gamma ^{1/2}} \pare{
\norm{w_0}_{\Hs} + \frac{1}{\gamma ^{1/2}} \norm{F}_{L^{2}_t \dot{H}^{s}}
}. 
\end{equation*}
An interpolation now concludes the estimates. 
\end{proof}

Lemma \ref{lem:smoothin_bulk_force} in particular asserts that, if $ F $ is sufficiently regular, the solution $ w $ of \eqref{eq:parabolic_linear} is an $ o_\gamma\pare{1} $ function in $ L^4_T\dot{H}^s $. This is not completely surprising, in fact supposing that $ F\in L^2_T\dot{H}^{s-1} $ (let us remark that such regularity is \textit{not} the same one required in the statement of Lemma \ref{lem:smoothin_bulk_force}) a standard $ \Hs $ energy estimate on the equation \eqref{eq:parabolic_linear} shows that in fact $ w $ is $ \cO\pare{\gamma^{-1}} $ as $ \gamma\to\infty $ in $ L^2_T\Hs $, interpolating hence we deduce that $ w $ is $ o_\gamma\pare{1} $ in $ L^p_T\Hs $ for $ p\in\left[2, \infty\right) $ (if $ F $ is "sufficiently regular"). This is obviously not the case when $ p=\infty $; the damping provided by the term $ \gamma w $ has no effect in $ t=0 $, we want though to quantify such damping effects for strictly positive times. \\

Let us now set $ \alpha,  \gamma, \mu > 0 $, and let us define the following function defined in $ \bR^d $
\begin{equation}\label{eq:definizione_m_gamma_mu}
m_{\gamma, \mu}^\alpha\pare{x} = \pare{\frac{\av{x}^2}{\gamma + \mu \av{x}^2}}^{\frac{\alpha}{2}}. 
\end{equation}

Indeed to the function $ m_{\gamma, \mu}^\alpha $ we can associate a Fourier multiplier 
\begin{equation*}
m_{\gamma, \mu}^\alpha \pare{\partial} g = 
\cF^{-1} \pare{ m_{\gamma, \mu}^\alpha \pare{\xi} \hat{g} \pare{\xi} }
= \cF^{-1} \pare{\pare{\frac{\av{\xi}^2}{\gamma + \mu \av{\xi}^2}}^{\frac{\alpha}{2}} \hat{g} \pare{\xi}}. 
\end{equation*}

\begin{lemma}\label{lem:properties_m_gamma}
Let $ g\in L^2\pare{\bR^d} $, then
\begin{equation*}
\norm{m_{\gamma, \mu}^\alpha\pare{\partial} g}_{L^2\pare{\bR^d}} \leqslant {\frac{C}{\gamma^{\alpha/4}} } \norm{g}_{L^2\pare{\bR^d}} +\frac{1}{{\mu^{\alpha/2}}} \ o_{\gamma}\pare{1}, 
\end{equation*}
where $ o_{\gamma}\pare{1} $ is a nonnegative function which tends to zero as $ \gamma $ tends to infinity. 
\end{lemma}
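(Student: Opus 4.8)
The plan is to estimate the multiplier $m_{\gamma,\mu}^\alpha(\xi)$ by splitting frequency space at the natural crossover scale $|\xi|^2 \sim \gamma/\mu$, where the damping term $\gamma$ and the diffusion term $\mu|\xi|^2$ in the denominator balance. On low frequencies $\{|\xi|^2 \leqslant \gamma/\mu\}$ one has $\gamma + \mu|\xi|^2 \geqslant \gamma$, so
\[
m_{\gamma,\mu}^\alpha(\xi) = \left(\frac{|\xi|^2}{\gamma+\mu|\xi|^2}\right)^{\alpha/2} \leqslant \frac{|\xi|^\alpha}{\gamma^{\alpha/2}},
\]
and on that region $|\xi|^\alpha \leqslant (\gamma/\mu)^{\alpha/2}$, which would give a bound of order $\mu^{-\alpha/2}$ — not quite what we want, so the low-frequency part must be handled more carefully, keeping a genuine $\gamma$-decay out of it. On high frequencies $\{|\xi|^2 \geqslant \gamma/\mu\}$ one instead uses $\gamma + \mu|\xi|^2 \geqslant \mu|\xi|^2$, giving $m_{\gamma,\mu}^\alpha(\xi) \leqslant \mu^{-\alpha/2}$, a bound uniform in $\gamma$ but manifestly not small.

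So I would argue as follows. Fix a radius $R>0$ (to be thought of as a large but $\gamma$-independent parameter, or simply let $R\to\infty$ at the end). Decompose $g = g_{\leqslant R} + g_{>R}$ into its low- and high-frequency parts relative to $\{|\xi|\leqslant R\}$. For the low-frequency piece, on $\{|\xi|\leqslant R\}$ we have $m_{\gamma,\mu}^\alpha(\xi) \leqslant |\xi|^\alpha/\gamma^{\alpha/2} \leqslant R^\alpha/\gamma^{\alpha/2}$; but this produces a constant depending on $R$. Cleaner: bound $m_{\gamma,\mu}^\alpha(\xi) \leqslant (\gamma+\mu|\xi|^2)^{-\alpha/2}|\xi|^\alpha$ and note that globally $m_{\gamma,\mu}^\alpha(\xi)\leqslant \gamma^{-\alpha/4}\mu^{-\alpha/4}$ since $\gamma+\mu|\xi|^2 \geqslant 2\sqrt{\gamma\mu}\,|\xi|$ by AM–GM, hence $m_{\gamma,\mu}^\alpha(\xi) \leqslant (2\sqrt{\gamma\mu}\,|\xi|)^{-\alpha/2}|\xi|^\alpha = C\gamma^{-\alpha/4}\mu^{-\alpha/4}|\xi|^{\alpha/2}$ — still the wrong homogeneity. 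The correct route is the two-region split with the $o_\gamma(1)$ absorbing the high-frequency tail: write
\[
\norm{m_{\gamma,\mu}^\alpha(\partial)g}_{L^2}^2 = \int_{|\xi|^2\leqslant \gamma/\mu} m_{\gamma,\mu}^\alpha(\xi)^2|\hat g(\xi)|^2\,\d\xi + \int_{|\xi|^2 > \gamma/\mu} m_{\gamma,\mu}^\alpha(\xi)^2|\hat g(\xi)|^2\,\d\xi.
\]
On the first integral, $\gamma+\mu|\xi|^2\geqslant\gamma$ and $|\xi|^2\leqslant\gamma/\mu$ give $m_{\gamma,\mu}^\alpha(\xi)^2 = |\xi|^{2\alpha}(\gamma+\mu|\xi|^2)^{-\alpha} \leqslant |\xi|^\alpha \gamma^{-\alpha/2}\cdot|\xi|^\alpha(\gamma+\mu|\xi|^2)^{-\alpha/2}$, and since $|\xi|^\alpha(\gamma+\mu|\xi|^2)^{-\alpha/2}\leqslant\mu^{-\alpha/2}$... this again mixes scales. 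I think the honest statement is: bound the first region by $C\gamma^{-\alpha/2}\int|\hat g|^2 = C\gamma^{-\alpha/2}\norm g_{L^2}^2$ using only $m_{\gamma,\mu}^\alpha(\xi)\leqslant|\xi|^\alpha\gamma^{-\alpha/2}$ and ... no: $|\xi|^\alpha$ is unbounded. One must use $m^\alpha\le 1$ when $\mu\ge$ something; actually $m_{\gamma,\mu}^\alpha(\xi)\le \mu^{-\alpha/2}$ always, and $m_{\gamma,\mu}^\alpha(\xi)\to 0$ pointwise as $\gamma\to\infty$ for each fixed $\xi$. Hence by dominated convergence $\int_{|\xi|\le R}m_{\gamma,\mu}^\alpha(\xi)^2|\hat g(\xi)|^2\d\xi \to 0$; combined with the crude $\mu^{-\alpha/2}$ bound on $\{|\xi|>R\}$, letting $\gamma\to\infty$ then $R\to\infty$ yields the claimed $o_\gamma(1)/\mu^{\alpha/2}$ structure.

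Here is the clean plan I will actually write. First, the pointwise bounds: by AM–GM $\gamma+\mu|\xi|^2 \geqslant \gamma$, so $m_{\gamma,\mu}^\alpha(\xi) \leqslant \gamma^{-\alpha/2}|\xi|^\alpha$; and $\gamma+\mu|\xi|^2\geqslant\mu|\xi|^2$, so $m_{\gamma,\mu}^\alpha(\xi)\leqslant\mu^{-\alpha/2}$. Second, fix $R>0$ and split $L^2$-norm at $|\xi|=R$. On $\{|\xi|\le R\}$ use $m_{\gamma,\mu}^\alpha(\xi)\le \gamma^{-\alpha/2}R^\alpha$, giving a contribution $\le C(R)\gamma^{-\alpha/2}\norm g_{L^2}$; on $\{|\xi|>R\}$ use $m_{\gamma,\mu}^\alpha(\xi)\le\mu^{-\alpha/2}$, giving $\le \mu^{-\alpha/2}\norm{g_{>R}}_{L^2}$. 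Third, since $\norm{g_{>R}}_{L^2}\to 0$ as $R\to\infty$ (tail of an $L^2$ function) choose $R=R(\gamma)$ tending to infinity slowly enough — e.g. $R=\gamma^{\alpha/8}$ — so that $C(R)\gamma^{-\alpha/2}=C\gamma^{\alpha/8\cdot\alpha}\gamma^{-\alpha/2}$... one needs $R^\alpha\gamma^{-\alpha/2}$ controlled; picking $R=\gamma^{1/4}$ makes it $\gamma^{\alpha/4-\alpha/2}=\gamma^{-\alpha/4}$, so the low piece is $\le C\gamma^{-\alpha/4}\norm g_{L^2}$ and the high piece is $\le\mu^{-\alpha/2}\norm{g_{>\gamma^{1/4}}}_{L^2}=:\mu^{-\alpha/2}o_\gamma(1)$ since $\gamma^{1/4}\to\infty$. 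This gives exactly $\norm{m_{\gamma,\mu}^\alpha(\partial)g}_{L^2}\le C\gamma^{-\alpha/4}\norm g_{L^2}+\mu^{-\alpha/2}o_\gamma(1)$. The only mild subtlety — the "main obstacle" — is the bookkeeping that forces the $\gamma^{-\alpha/4}$ rather than $\gamma^{-\alpha/2}$: it comes from having to pay $R^\alpha$ on the low-frequency cap while simultaneously needing $R\to\infty$ to kill the high-frequency tail, and balancing these two competing demands is what fixes the exponent $\alpha/4$.

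\begin{proof}[Plan of proof]
The idea is a dyadic-type frequency split at a $\gamma$-dependent radius, exploiting the two elementary lower bounds $\gamma+\mu|\xi|^2\geqslant\gamma$ and $\gamma+\mu|\xi|^2\geqslant\mu|\xi|^2$. From the first, $m_{\gamma,\mu}^\alpha(\xi)\leqslant\gamma^{-\alpha/2}|\xi|^\alpha$; from the second, $m_{\gamma,\mu}^\alpha(\xi)\leqslant\mu^{-\alpha/2}$ uniformly in $\xi$ and $\gamma$. Fix $R>0$ and decompose, via Plancherel,
\begin{equation*}
\norm{m_{\gamma,\mu}^\alpha(\partial)g}_{L^2(\bR^d)}^2 = \int_{\av{\xi}\leqslant R} m_{\gamma,\mu}^\alpha(\xi)^2\,\av{\hat g(\xi)}^2\,\d\xi + \int_{\av{\xi}> R} m_{\gamma,\mu}^\alpha(\xi)^2\,\av{\hat g(\xi)}^2\,\d\xi.
\end{equation*}
On the low-frequency region bound $m_{\gamma,\mu}^\alpha(\xi)\leqslant\gamma^{-\alpha/2}R^\alpha$; choosing $R=\gamma^{1/4}$ turns this into $\gamma^{-\alpha/4}$, so the first integral is $\leqslant C\,\gamma^{-\alpha/2}\norm{g}_{L^2}^2$ — wait, more precisely $\leqslant C\gamma^{-\alpha/4}\cdot\gamma^{-\alpha/4}\norm g_{L^2}^2$, i.e. its square root is $\leqslant C\gamma^{-\alpha/4}\norm g_{L^2}$. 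On the high-frequency region use the uniform bound $m_{\gamma,\mu}^\alpha(\xi)\leqslant\mu^{-\alpha/2}$ to get a contribution whose square root is $\leqslant\mu^{-\alpha/2}\bigl(\int_{\av{\xi}>\gamma^{1/4}}\av{\hat g(\xi)}^2\d\xi\bigr)^{1/2}$. Since $g\in L^2$, the quantity $\bigl(\int_{\av{\xi}>\gamma^{1/4}}\av{\hat g}^2\bigr)^{1/2}$ is a nonnegative function of $\gamma$ tending to $0$ as $\gamma\to\infty$ (dominated convergence / absolute continuity of the integral), hence it is an admissible $o_\gamma(1)$. Combining the two pieces with the triangle inequality in $L^2$ gives
\begin{equation*}
\norm{m_{\gamma,\mu}^\alpha(\partial)g}_{L^2(\bR^d)} \leqslant \frac{C}{\gamma^{\alpha/4}}\norm{g}_{L^2(\bR^d)} + \frac{1}{\mu^{\alpha/2}}\,o_\gamma(1),
\end{equation*}
as claimed. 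The only point requiring a little care is the choice of the splitting radius $R=R(\gamma)$: it must grow with $\gamma$ in order to annihilate the high-frequency tail of $g$, yet slowly enough that the factor $R^\alpha$ incurred on the low-frequency cap does not destroy the decay coming from $\gamma^{-\alpha/2}$; the balance $R=\gamma^{1/4}$ is what produces the exponent $\alpha/4$ in the final bound.
\end{proof}
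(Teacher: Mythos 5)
Your final plan is correct and is essentially identical to the paper's own proof: both split the frequency integral at $\av{\xi}=\gamma^{1/4}$, bound the multiplier on the low-frequency cap by $\gamma^{-\alpha/4}$ (the paper via monotonicity of $m^{2\alpha}_{\gamma,\mu}$ in $\av{\xi}$, you via the equivalent pointwise bound $m^\alpha_{\gamma,\mu}\pare{\xi}\leqslant\gamma^{-\alpha/2}\av{\xi}^\alpha$ evaluated at the cutoff), and absorb the high-frequency tail into $\mu^{-\alpha/2}o_\gamma\pare{1}$ using the uniform bound $m^\alpha_{\gamma,\mu}\leqslant\mu^{-\alpha/2}$ and the vanishing $L^2$ tail of $\hat g$. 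The exploratory false starts preceding the clean plan could simply be deleted; the argument as finally stated is complete.
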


Lemma \ref{lem:properties_m_gamma} in particular asserts that, fixed $ \mu, \alpha >0 $,  $ m_{\gamma, \mu}^\alpha\pare{\partial}\xrightarrow{\gamma\to \infty} 0 $ as a linear operator on $ L^2\pare{\bR^d} $.

\begin{proof}
\begin{align*}
\norm{m_{\gamma, \mu}\pare{\partial} g}_{L^2\pare{\bR^d}}^2 & = 
\int \pare{\frac{\av{\xi}^2}{\gamma + \mu \av{\xi}^2}}^\alpha \av{\hat{g}\pare{\xi}}^2 \d\xi, \\
& = \int_{\av{\xi}\leqslant \gamma^{1/4}} \pare{\frac{\av{\xi}^2}{\gamma + \mu \av{\xi}^2}}^\alpha \av{\hat{g}\pare{\xi}}^2 \d\xi
+ \int_{\av{\xi}> \gamma^{1/4}} \pare{\frac{\av{\xi}^2}{\gamma + \mu \av{\xi}^2}}^\alpha \av{\hat{g}\pare{\xi}}^2 \d\xi = I_\gamma + I^\gamma. 
\end{align*}
Since $ g\in L^2 $ and since $ {m_{\gamma, \mu}^{2\alpha}}\leqslant \mu ^{-\alpha} $ pointwise  we can assert, by dominated convergence, that
\begin{equation*}
I^\gamma \leqslant \frac{1}{\mu^\alpha} \ o_\gamma\pare{1}, 
\end{equation*}
while since $ {m_{\gamma, \mu}^{2\alpha}} $ is strictly increasing in $ \av{\xi} $ we can assert that
\begin{equation*}
I_\gamma \leqslant \frac{1}{{\gamma}^{\alpha/2}} \ \norm{g}_{L^2\pare{\bR^d}}^2, 
\end{equation*}
concluding. 
\end{proof}

\begin{definition}
Given two Banach spaces $ X, Y $ we say that $ z\in X+Y $ if there exists a $ x\in X $ and an $ y\in Y $ so that $ z=x+y $. Moreover
\begin{equation*}
\norm{z}_{X+Y}= \sup \set{\norm{x}_X+\norm{y}_Y \  \left|\ x\in X, y\in Y\ \wedge \ x+y = z\Big. \right. } .
\end{equation*}
\end{definition}

Our aim is to use hence Lemma \ref{lem:properties_m_gamma} in order to study the damping properties, when $ \gamma $ is large, in $ L^\infty_T\Hs \pare{\bR^d} $ of the solutions of \eqref{eq:parabolic_linear} when $ F $ is an $ \cO\pare{1} $ function in some suitable space.

\begin{lemma}\label{lem:linear_damping_estimate}
Let $ w_0\in\Hud $ and $ F\in L^2_T \dot{H}^{-\frac{1}{2}} + L^{4/3}_TL^2 $, i.e. $ F=F_1+F_2 $ with $ F_1\in L^2_T \dot{H}^{-\frac{1}{2}}  $ and $ F_2 \in L^{4/3}_T L^2 $. Let $ w $ be the unique tempered distribution which solves \eqref{eq:parabolic_linear}, then for each $ t\in\bra{0, T} $
\begin{equation}\label{eq:linear_damping_estimate}
\norm{w \pare{t}}_{\Hud} \leqslant C\pare{ e^{-\gamma t}\norm{w_0}_{\Hud} +{\frac{1}{\min\set{\gamma^{1/4}, \gamma^{1/8}}} } \norm{F}_{L^2_T \dot{H}^{-\frac{1}{2}}+ L^{4/3}_TL^2} +\frac{1}{ \min\set{\mu ^{1/2}, \mu^{1/4}} } \ o_{\gamma}\pare{1} }, 
\end{equation}
whence
\begin{equation}\label{eq:linear_damping_limit}
\lim _{\gamma\to \infty} \norm{w}_{L^\infty\pare{\pare{\varepsilon, T}; \Hud}} = 0,
\end{equation}
for any $ \varepsilon >0 $. 
\end{lemma}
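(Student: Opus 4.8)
The plan is to decompose the solution $w$ of \eqref{eq:parabolic_linear} according to the linearity of the problem, writing $w = w^{(0)} + w^{(1)} + w^{(2)}$, where $w^{(0)}$ solves the homogeneous equation with data $w_0$, while $w^{(1)}$ and $w^{(2)}$ solve the equation with zero initial data and bulk force $F_1 \in L^2_T \dot H^{-1/2}$ and $F_2 \in L^{4/3}_T L^2$ respectively. One then estimates each piece in $L^\infty_t \Hud$ separately and recombines; the $+$-space norm on the right-hand side of \eqref{eq:linear_damping_estimate} is handled by taking the infimum over all such splittings $F = F_1 + F_2$ at the very end.

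For the homogeneous part $w^{(0)}$, the Fourier representation gives $\hat w^{(0)}(\xi,t) = e^{-t(\gamma + \mu|\xi|^2)}\hat w_0(\xi)$, and since $e^{-t(\gamma+\mu|\xi|^2)} \le e^{-\gamma t}$ pointwise, an $L^2(|\xi|^{1}\,\d\xi)$ estimate immediately yields $\norm{w^{(0)}(t)}_{\Hud} \le e^{-\gamma t}\norm{w_0}_{\Hud}$. For $w^{(2)}$ with force $F_2 \in L^{4/3}_T L^2$, I would follow exactly the argument producing \eqref{eq:interpolation2} in the proof of Lemma \ref{lem:parabolic2}, specialized to $s = 1/2$ and $q = 4/3$ (so $s_q = 1/2$ and $\dot H^{s - s_q} = L^2$): an application of Hölder in time on the Duhamel formula \eqref{eq:w_mild}, followed by an $L^2(|\xi|\,\d\xi)$ estimate and Lemma \ref{lem:anis_Leb}, controls $\norm{w^{(2)}}_{L^\infty_t \Hud}$ by $\gamma^{-1/8}\norm{F_2}_{L^{4/3}_T L^2}$ — the key point being that the weight $(\gamma + \mu|\xi|^2)^{-s_q}|\xi|^{2s} = (\gamma+\mu|\xi|^2)^{-1/2}|\xi|$ can be bounded either by $\mu^{-1/2}$ (the way it was used in Lemma \ref{lem:parabolic2}) or, extracting a factor of $\gamma$, by $\gamma^{-1/4}$ pointwise, giving the exponent $1/8$ after the square root.

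The genuinely new ingredient is the treatment of $w^{(1)}$ with force $F_1 \in L^2_T \dot H^{-1/2}$, for which a crude energy estimate would only give an $\cO(1)$ bound in $L^\infty_t \Hud$ — the damping seems to do nothing. Here I would use the operator $m_{\gamma,\mu}^\alpha(\partial)$ of \eqref{eq:definizione_m_gamma_mu}: from the Duhamel formula one has, in Fourier variables, $\hat w^{(1)}(\xi,t) = \int_0^t e^{-(t-t')(\gamma+\mu|\xi|^2)}\hat F_1(\xi,t')\,\d t'$, and a Hölder estimate in $t'$ (exponents $2$ and $2$) together with $\int_0^\infty e^{-2s(\gamma+\mu|\xi|^2)}\,\d s = \tfrac{1}{2(\gamma+\mu|\xi|^2)}$ bounds $\sup_{t'}|\hat w^{(1)}(\xi,t')|$ by $(\gamma+\mu|\xi|^2)^{-1/2}\norm{\hat F_1(\xi,\cdot)}_{L^2_T}$. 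Taking the $L^2(|\xi|\,\d\xi)$ norm, the resulting multiplier is $|\xi|^{1/2}(\gamma+\mu|\xi|^2)^{-1/4}$ acting on the function whose Fourier transform is $|\xi|^{-1/2}\norm{\hat F_1(\xi,\cdot)}_{L^2_T}$, i.e.\ it is precisely $m_{\gamma,\mu}^{1/2}(\partial)$ applied to an $L^2$ function of norm $\norm{F_1}_{L^2_T \dot H^{-1/2}}$. Lemma \ref{lem:properties_m_gamma} with $\alpha = 1/2$ then gives the bound $C\gamma^{-1/8}\norm{F_1}_{L^2_T\dot H^{-1/2}} + \mu^{-1/4} o_\gamma(1)$. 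Collecting the three contributions, using $\min\{\gamma^{1/4},\gamma^{1/8}\}$ and $\min\{\mu^{1/2},\mu^{1/4}\}$ to write a single clean bound valid for all $\gamma,\mu>0$, and taking the infimum over splittings $F = F_1+F_2$ yields \eqref{eq:linear_damping_estimate}.

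Finally, \eqref{eq:linear_damping_limit} follows from \eqref{eq:linear_damping_estimate}: on the time interval $(\varepsilon,T)$ the factor $e^{-\gamma t}$ is bounded by $e^{-\gamma\varepsilon} \to 0$, the two terms carrying $\norm{F}_{L^2_T\dot H^{-1/2}+L^{4/3}_TL^2}$ (which is finite and independent of $\gamma$) are multiplied by $\min\{\gamma^{1/4},\gamma^{1/8}\}^{-1}\to 0$, and the last term is $o_\gamma(1)$ by construction. I expect the main obstacle to be keeping the bookkeeping of the exponents and the two regimes $\gamma \gtrless 1$, $\mu \gtrless 1$ consistent — in particular making sure that the $m_{\gamma,\mu}^{1/2}$ argument is set up with the correct homogeneity so that Lemma \ref{lem:properties_m_gamma} applies verbatim — rather than any conceptual difficulty.
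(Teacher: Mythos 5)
Your strategy is exactly the paper's: superposition into the free evolution plus two Duhamel terms, H\"older in time on the Fourier side, and then Lemma \ref{lem:properties_m_gamma} combined with Lemma \ref{lem:anis_Leb}. However, there are two concrete slips in the execution. First, in the $F_1$ piece your identification of the multiplier is off: from $\sup_{t'}\av{\hat w^{(1)}\pare{\xi,t'}}\leqslant \pare{\gamma+\mu\av{\xi}^2}^{-1/2}\|\hat F_1\pare{\xi,\cdot}\|_{L^2_T}$ the quantity to control is $\big(\int \av{\xi}\pare{\gamma+\mu\av{\xi}^2}^{-1}\|\hat F_1\pare{\xi,\cdot}\|_{L^2_T}^2\,\d\xi\big)^{1/2}$, and writing the integrand as $\bra{\text{multiplier}}^2\cdot\av{\xi}^{-1}\|\hat F_1\|_{L^2_T}^2$ forces the multiplier to be $\av{\xi}\pare{\gamma+\mu\av{\xi}^2}^{-1/2}=m^{1}_{\gamma,\mu}\pare{\xi}$, not $m^{1/2}_{\gamma,\mu}\pare{\xi}$ as you claim (your factorization would require $\pare{\gamma+\mu\av{\xi}^2}^{1/4}=\av{\xi}^{1/2}$). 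So Lemma \ref{lem:properties_m_gamma} must be applied with $\alpha=1$, giving $C\gamma^{-1/4}\norm{F_1}_{L^2_T\Hmud}+\mu^{-1/2}o_\gamma\pare{1}$; your $\gamma^{-1/8}$ rate happens to still be compatible with the $\min\set{\gamma^{1/4},\gamma^{1/8}}$ in \eqref{eq:linear_damping_estimate}, but the derivation as written is wrong.

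Second, and more seriously, for the $F_2$ piece your claim that the weight $\av{\xi}\pare{\gamma+\mu\av{\xi}^2}^{-1/2}$ is bounded pointwise by $C\gamma^{-1/4}$ is false: as $\av{\xi}\to\infty$ this weight tends to $\mu^{-1/2}$ with no decay in $\gamma$ whatsoever, so the clean bound $\norm{w^{(2)}}_{L^\infty_t\Hud}\leqslant C\gamma^{-1/8}\norm{F_2}_{L^{4/3}_TL^2}$ you assert cannot hold (test it on $F_2$ concentrated at frequency $R\to\infty$). This is precisely why Lemma \ref{lem:properties_m_gamma} splits frequencies at $\av{\xi}=\gamma^{1/4}$ and why the final estimate carries the non\-uniform remainder $\mu^{-1/4}o_\gamma\pare{1}$ coming from dominated convergence on the high frequencies. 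You do invoke that remainder in your final assembly, but you must route the $F_2$ term through Lemma \ref{lem:properties_m_gamma} with $\alpha=1/2$ (as the paper does) rather than through a pointwise bound; once both corrections are made your argument coincides with the paper's proof, and the deduction of \eqref{eq:linear_damping_limit} at the end is fine.
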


\begin{rem}
Indeed the limit in \eqref{eq:linear_damping_limit} holds in the timespan $ \pare{\varepsilon, T} $ as it is clear from the estimate \eqref{eq:linear_damping_estimate}: in $ t=0 $ there is obviously no damping effect. \fine
\end{rem}

\begin{proof}

By superposition we can write $ w =W + w_1 + w_2 $, where
\begin{align*}
 W\pare{x, t} & = S_{\gamma, \mu} \pare{ \partial, t} w_0 \pare{x}, \\
 w_1\pare{x, t} & = \int_0^t S_{\gamma, \mu} \pare{ \partial, t-t'} F_1\pare{x, t'} \d t', \\
 w_2\pare{x, t} & = \int_0^t S_{\gamma, \mu} \pare{ \partial, t-t'} F_2\pare{x, t'} \d t'.
\end{align*}

Indeed the following bound is immediate
\begin{equation*}
\norm{W\pare{t}}_{\Hud} = \norm{S_{\gamma, \mu} \pare{ \partial, t} w_0}_{\Hud} \leqslant e^{-\gamma t}\norm{w_0}_{\Hud}. 
\end{equation*}

For $ w_1 $ we can argue as in \eqref{eq:interpolation1} (here we set $ q=2 $) in order to deduce
\begin{align*}
\norm{w_1}_{L^\infty_t \Hud} & \leqslant \left( \int _{\bR^d} \frac{\av{\xi}}{{\gamma + \mu\av{\xi}^2}}\ \norm{\hat{F}_1\pare{\xi, \cdot}}_{L^{2}\pare{\bra{0, t}}}^2 \d \xi \right)^{1/2}, \\
& = \norm{m_{\gamma, \mu}^1\pare{\partial}\partial^{-1/2}\norm{F_1}_{L^2_t}}_{L^2_x}. 
\end{align*}
We apply Lemma \ref{lem:properties_m_gamma} with $ \alpha = 1 $ in order to deduce
\begin{align*}
\norm{w_1}_{L^\infty_t \Hud}  & \leqslant \frac{C}{\gamma^{1/4}} \norm{F_1}_{L^2_t\Hmud} + \frac{1}{\mu ^{1/2}} o_\gamma\pare{1}. 
\end{align*}
\noindent
For $ w_2 $ we repeat the same procedure which lead us to prove \eqref{eq:interpolation1}, setting $ q=4/3 $, we have
\begin{align*}
\norm{w_2}_{L^\infty_t \Hud} & \leqslant \left( \int _{\bR^d} \frac{\av{\xi}}{\pare{ \gamma + \mu\av{\xi}^2}^{1/2}}\ \norm{\hat{F}_2\pare{\xi, \cdot}}_{L^{4/3}\pare{\bra{0, t}}}^2 \d \xi \right)^{1/2}, \\
& = \norm{m_{\gamma, \mu}^{1/2}\pare{\xi}\norm{\hat{F}_2}_{L^{4/3}_t}}_{L^2_\xi}. 
\end{align*}
We again use Lemma \ref{lem:properties_m_gamma} with $ \alpha=1/2 $, next Lemma \ref{lem:anis_Leb} and Plancherel theorem to deduce the final bound required
\begin{align*}
\norm{w_2}_{L^\infty_t \Hud}  & \leqslant \frac{C}{\gamma^{1/8}} \norm{F_2}_{L^{4/3}_t L^2} + \frac{1}{\mu ^{1/4}} o_\gamma\pare{1}. 
\end{align*}

\end{proof}

\section{Reformulation of the system \eqref{eq:Shilomis1}}\label{sec:ref_syst}

As already mentioned the main goal in the present study is to study the dynamics of the system \eqref{eq:Shilomis1} when $ \tau $ is small or tends to zero. Intuitively one understands that, when $ \tau\to 0 $ the term 
\begin{equation*}
\frac{1}{\tau}\pare{M-\chi_0 H}, 
\end{equation*}
is the leading order term (in $ \tau $) in \eqref{eq:Shilomis1}, whence we expect, when $ \tau $ is sufficiently close to zero, to have the asymptotic development $ M-\chi_0 H = \cO\pare{\tau} $ in some suitable topology. To understand rigorously  this asymptotic is the mayor difficulty in the analysis of solutions of \eqref{eq:Shilomis1}. \\

Heuristically one expects the term  $ \frac{1}{\tau}\pare{M-\chi_0 H} $ to provide a damping effect on the components $ M, H $, solutions of \eqref{eq:Shilomis1}. The damping effect is though not immediately clear from \eqref{eq:Shilomis1}; the aim of the present section is hence to provide a new reformulation of the system \eqref{eq:Shilomis1} in some new, but equivalent, unknowns which explicit the damping effect provided by the term $ \frac{1}{\tau}\pare{M-\chi_0 H} $. \\

From the magnetostatic equation, the third equation of \ref{eq:Shilomis1}, and since $ \curl \ H=0 $, we immediately deduce that
\begin{equation*}
H = -\cQ M  + \cG_F, \hspace{1cm} \cG_F = \nabla\Delta^{-1}F,
\end{equation*}
where $ \cQ=\Delta^{-1}\nabla\div $.

\begin{rem}
Let us remark that if $ \cG_F = \nabla\Delta^{-1} F $ and $ F $ has the regularity stated in \eqref{eq:regularity_F} then
\begin{align}\label{eq:regularity_GF}
\cG_F \in L^4_T  \dot{H}^1 \cap L^2_T\dot{H}^3, && \partial_t \cG_F \in L^2_T\Hu. 
\end{align}
The regularity stated in \eqref{eq:regularity_GF} will be considered implicitly given from now on.  \fine
\end{rem}

Whence it is clear that, denoting $ \cP $ the Leray projector onto divergence-free vector fields, and denoting
\begin{align*}
m = \cP M, && \tilde{m} = \cQ M, 
\end{align*}
that
\begin{align*}
\frac{1}{\tau}\cP \pare{M-\chi_0 H} = \frac{1}{\tau} \ m , &&
\frac{1}{\tau}\cQ \pare{M-\chi_0 H} = \frac{1+\chi_0}{\tau} \bra{\tilde{m}-\frac{\chi_0}{1+\chi_0} \cG_F}. 
\end{align*}

We can hence define the new unknown 
\begin{equation*}
r = \tilde{m}-\frac{\chi_0}{1+\chi_0} \cG_F, 
\end{equation*}
of which we can compute the evolution equation from \eqref{eq:Shilomis1}. The advantage of working with the variables $  m , r $ instead than $ M, H $ resided in the fact that, for such, the damping induced by the term $ \frac{1}{\tau} \pare{M-\chi_0 H} $ is explicit. \\

We can hence compute the evolution equations for $ \pare{u, m, r} $ form the ones of $ \pare{u, M, H} $ (and vice-versa) via the following reversible change of variables
\begin{align}\label{eq:change_unknown}
\left\lbrace
\begin{aligned}
& u=u \\
& M = m+r + \frac{\chi_0}{1+\chi_0} \cG_F \\
& H = -r + \frac{1}{1+\chi_0} \cG_F
\end{aligned}
\right. ,
&&
\left\lbrace
\begin{aligned}
&u=u \\
& m = \cP M \\
& r = \cQ M - \frac{\chi_0}{1-\chi_0}\cG_F 
\end{aligned}
\right. .
\end{align}

Thanks to the explicit change of unknown given in \ref{eq:change_unknown} it is rather simple to deduce the evolution of $ \pare{u, m, r} $ from \eqref{eq:Shilomis1}, and we obtain
	\begin{equation}\tag{S2}\label{eq:Shilomis2}
		\left\lbrace
		\begin{aligned}
			& \begin{multlined}
			{\partial_t u + \pare{u\cdot\nabla} u}- \nu \Delta u + \nabla p = \pare{m+r + \frac{\chi_0}{1+\chi_0} \cG_F}\cdot \nabla \pare{-r + \frac{1}{1+\chi_0} \cG_F}\\
 			+ \frac{1}{2}\curl \bra{\pare{m+r + \frac{\chi_0}{1+\chi_0} \cG_F} \times \pare{-r + \frac{1}{1+\chi_0} \cG_F}},
\end{multlined} \\[5mm]
			& \begin{multlined}
			\partial_t m +\frac{1}{\tau} \ m - \sigma \Delta m = -\cP \bra{u\cdot\nabla \pare{ m+r + \frac{\chi_0}{1+\chi_0} \cG_F }} + \frac{1}{2}\cP \bra{ \pare{\curl \ u} \times \pare{m+r + \frac{\chi_0}{1+\chi_0} \cG_F}  } \\
			- \cP \set{ \pare{m+r + \frac{\chi_0}{1+\chi_0} \cG_F} \times \bra{ \pare{m+r + \frac{\chi_0}{1+\chi_0} \cG_F} \times \pare{-r + \frac{1}{1+\chi_0} \cG_F}} },
			\end{multlined}\\[5mm]
			& \begin{multlined}
			\partial_t r +\frac{1+\chi_0}{\tau} \ r - \sigma \Delta r = -\cQ \bra{u\cdot\nabla \pare{ m+r + \frac{\chi_0}{1+\chi_0} \cG_F }} + \frac{1}{2}\cQ \bra{ \pare{\curl \ u} \times \pare{m+r + \frac{\chi_0}{1+\chi_0} \cG_F}  } \\
			- \cQ \set{ \pare{m+r + \frac{\chi_0}{1+\chi_0} \cG_F} \times \bra{ \pare{m+r + \frac{\chi_0}{1+\chi_0} \cG_F} \times \pare{-r + \frac{1}{1+\chi_0} \cG_F}} }\\
			-\frac{\chi_0}{1+\chi_0}\pare{ \partial_t \cG_F - \sigma \Delta \cG_F \Big.},
			\end{multlined}\\
			& \div \ u=0, \\
			& \left.\pare{u, m, r}\right|_{t=0} = \pare{u_0, m_0, r_0}.
		\end{aligned}
		\right. 
	\end{equation}
	
	From now on we will work with the system in the form \eqref{eq:Shilomis2}.

	\begin{rem}\label{rem:explanation_nonlinear_terms}
	We would like to remark the fact that, despite the system \ref{eq:Shilomis2} seems at a firs sight much more complex than the system \eqref{eq:Shilomis1}, there is in fact no relevant new technical difficulty in \eqref{eq:Shilomis2}. \\
	In fact the nonlinearities appearing on the right hand side of \eqref{eq:Shilomis2} belong at most to six classes which we can study without problem and which are here enumerated\footnote{Here and in the rest of the paper we use Einstein summation convention}
	\begin{itemize}
	
	\item They can be of the form
		\begin{equation*}
		B_{\ns}\pare{v, v} =  \pare{ v_i \ q_{i, j}^{\ns, \ell}\pare{\partial} v_j}_{\ell=1, 2, 3}, 
		\end{equation*}
	where $ q_{i, j}^{\ns, \ell} $ are homogeneous Fourier multipliers of order one. 
	
	\item They can be of the form
	\begin{align*}
	\cL_1 \pare{v} = \pare{v_i \ q_{i, j}^{\pare{1}, \ell}\pare{\partial} G_j}_{\ell=1, 2, 3}, &&
	\cL_2 \pare{v} =  \pare{ G_i \ q_{i, j}^{\pare{2}, \ell}\pare{\partial}v_j}_{\ell=1, 2, 3},
	\end{align*}
	form some function $ G $ (notably in \eqref{eq:Shilomis2} $ G=\cG_F $). Here again $ q_{i, j}^{\pare{k}, \ell}, \ k =1, 2 $ are homogeneous Fourier multipliers of order one. 
	
	\item Lastly they can be $ p $-linear forms  of the form
	\begin{align*}
	\cN_p \pare{v} = v^{\otimes p} \otimes G^{\otimes\pare{3-p}},
	\end{align*}
	where we recall that given a $ w\in\bR^3 $ we identify as $ w^{\otimes q} $ the canonical $ q $--linear form whose components are elements of the form
	\begin{equation*}
	\prod_{q'=1}^q w_{j_{q'}}. 
	\end{equation*}
	In particular hence the components of $ \cN_p\pare{v} $ are of the form
	\begin{equation*}
	\prod_{q'=1}^p\prod_{q''=1}^{3-p} v_{j_{q'}} G_{j_{q''}}. 
	\end{equation*}

	\end{itemize}	
	
	Whence we can assert that \eqref{eq:Shilomis2} can be studied as a special system of the form
	\begin{equation}\label{eq:Shilomis_generic}
	\partial_t v + \bM v - \cA_2 \pare{\partial} v = B_{\ns}\pare{v, v} + \cL_1 \pare{v} + \cL_2 \pare{v} + \sum_{p=1}^3 \cN_p \pare{v} + f, 
	\end{equation}
	where $ \bM $ is a diagonal, nonnegative matrix and $ \cA_2 \pare{\partial} $ is an elliptic differential homogeneous operator of order two and $ f $ is a bulk force. We will many times think of \eqref{eq:Shilomis2} in the form \eqref{eq:Shilomis_generic} since there are much less terms to consider, which qualitatively describe every nonlinear term appearing in \eqref{eq:Shilomis2}. \fine
	\end{rem}

\section{Existence of a unique solution in a critical functional space uniformly in $ \tau\in\pare{0, \tau_0} $ } \label{sec:existence_sol}

In the present section we prove the main result of the paper, i.e. Theorem \ref{thm:main_thm}. The detailed result proved is the following one, which implies the proof of Theorem \ref{thm:main_thm} as explained in Remark \ref{rem:prop→thm}; 

\begin{prop}\label{prop:existence_unique_solution}
Let $ u_0\in\Hud $ and $ \cG_F\in L^4_{\loc}\pare{\bR_+; \Hu}\cap W^{1, 2}_{\loc}\pare{\bR_+; \Hu \cap \dot{H}^3} $. There exists a $ \rho, \varrho_0 >0 $ such that $ \rho > 2\varrho_0 $ and a $ T=T_{\varrho_0} \in\left( 0, \infty \right] $ (see \eqref{eq:def_T}) so that
\begin{equation} \label{eq:smallness_cG}
\norm{\cG}_{\LqHu}\leqslant \varrho_0 \leqslant \frac{\min\set{c^{1/2},\big.  c^{3/4}}}{C}, 
\end{equation}
where $ c = \min\set{\nu, \sigma} $ such that, if;

\begin{enumerate}[{a)}]

\item Let $ u_0, m_0, r_0 \in \Hud $ be such that
\begin{align*}
\norm{u_0}_{\Hud}\leqslant \frac{\nu^{1/4}}{C} \ \rho,
&&
\norm{\pare{m_0, r_0}}_{\Hud}\leqslant\frac{\sigma^{1/4}}{C}\ \rho,  
\end{align*}
and
\begin{equation}\label{eq:smallness_tau1}
\tau < \frac{\pare{1+\chi_0}^{7/3}}{ C\ \chi_0^{4/3}} \pare{\norm{\cG_F}_{L^2_T\dot{H}^{3}}   + \norm{\cG_F}_{\dot{W}^{1, 2}_T \dot{H}^1}}^{-4/3} \ \varrho_0^{4/3}, 
\end{equation}
then there exist a unique solution $ \pare{u, m, r} $ of \eqref{eq:Shilomis2} in the ball $ B\pare{0, 4\rho} $ of the space $ \LqHu $ which moreover belongs to the space $ \cC_T\Hud $. 

\item Let $ u_0, m_0, r_0 \in \Hud $ arbitrarily large and $ \tau > 0 $ satisfy the relation \eqref{eq:smallness_tau1}, there exist a $ T^\star = T^\star _{U_0}\in\pare{0, T} $ such that the system \eqref{eq:Shilomis2} admits a unique solution in the ball $ B\pare{0, 4\rho} $ of the space $ L^4_{T^\star}\dot{H}^1 $ which moreover belongs to the space $ \cC_{T^\star}\Hud $. 

\item \label{en:point_we_prove_in_the_ecistence_thm}  Let  $ u_0\in\Hud $ 
\begin{equation}\label{eq:smallness_vel_flow}
\norm{u_0}_{ \Hud}\leqslant\frac{\nu^{1/4}}{C} \ \rho, 
\end{equation}
and $ m_0, r_0\in\dot{H}^1 $  arbitrary. Let $ \tau $ be sufficiently small so that
\begin{equation}\label{eq:smallness_tau}
\tau\leqslant \min\set{
\frac{\rho^4}{C\pare{\norm{m_0}_{\dot{H}^1}^4 + \norm{r_0}_{\dot{H}^1}^4}}
\hspace{2mm}, \hspace{2mm}
 \frac{ \pare{{1+\chi_0}}^{7/3}\varrho_0^{4/3}}{C\chi_0^{4/3} \  \pare{\norm{\cG_F}_{L^2_T\dot{H}^{3}}   + \norm{\cG_F}_{\dot{W}^{1, 2}_T \dot{H}^1}}^{4/3}}
}. 
\end{equation}
Then there exist a unique solution $ \pare{u, m, r} $ of \eqref{eq:Shilomis2} in the ball $ B\pare{0, 4\rho} $ of the space $ \LqHu $ which moreover belongs to the space $ \cC_{T}\Hud $.

\item Let $ u_0\in\Hud $ arbitrarily large and let $ \tau $ satisfy \eqref{eq:smallness_tau}, there exist a $ T^\star\in\pare{0, T} $ such that the system \eqref{eq:Shilomis2} admits a unique solution in the ball $ B\pare{0, 4\rho} $ of the space $ L^4_{T^\star}\dot{H}^1 $ which moreover belongs to the space $ \cC_{T^\star}\Hud $.

\end{enumerate}

\end{prop}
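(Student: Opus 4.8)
The plan is to prove all four points of Proposition~\ref{prop:existence_unique_solution} as applications of the abstract fixed-point result Proposition~\ref{prop:fixed_point} in the Banach space $X = \LqHu$ (for the global statements) or $X = L^4_{T^\star}\dot H^1$ (for the local ones), after recasting \eqref{eq:Shilomis2} in the Duhamel (mild) form $v = v_{\mathrm{lin}} + T_1(v) + T_2(v,v) + T_3(v,v,v)$ suggested by the generic form \eqref{eq:Shilomis_generic}. Here $v = (u,m,r)$, $v_{\mathrm{lin}}$ collects the free evolution of the initial data plus the Duhamel term generated by the external force $f = -\tfrac{\chi_0}{1+\chi_0}(\partial_t\cG_F - \sigma\Delta\cG_F)$, the linear operator $T_1$ gathers all terms of type $\cL_1(v) + \cL_2(v) + \cN_1(v)$ (i.e.\ the terms linear in $v$, which always carry at least one factor $\cG_F$, hence are small thanks to \eqref{eq:smallness_cG}), $T_2$ gathers $B_{\ns}(v,v)$ together with $\cN_2(v)$, and $T_3$ gathers $\cN_3(v)$. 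The diagonal semigroup is $\mathrm{diag}(S_{0,\nu}, S_{1/\tau,\sigma}, S_{(1+\chi_0)/\tau,\sigma})$, so on the $m$- and $r$-components we may use the damped parabolic estimates of Section~\ref{sec:linear_parabolic_estimates} (Lemmata~\ref{lem:parabolic1}, \ref{lem:parabolic2}, \ref{lem:smoothin_bulk_force}), while on the $u$-component only the usual heat smoothing of Lemma~\ref{lem:parabolic1} is available.

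First I would record the multilinear bounds. Using the product law Lemma~\ref{lem:Sob_product_rules} in $\bR^3$ with $s=t=1$ (so $s+t-3/2 = 1/2$) together with the smoothing of Lemma~\ref{lem:parabolic1}, each bilinear building block $B_{\ns}$ maps $\LqHu\times\LqHu$ into itself with norm $\lesssim \mu^{-1/2}$ ($\mu = \nu$ or $\sigma$); the trilinear block $\cN_3$ is handled by writing $L^4_T\dot H^1 \hookrightarrow L^4_T L^6$ and iterating the product law, giving $\|T_3\| \lesssim c^{-?}$. The terms $\cL_1,\cL_2$ are bilinear in $(v,\cG_F)$ but enter $T_1$ as linear maps of $v$ with operator norm $\lesssim c^{-1/2}\|\cG_F\|_{\LqHu} \le c^{-1/2}\varrho_0$, and similarly $\cN_1$ contributes a linear term with norm $\lesssim c^{-?}\varrho_0^2$; choosing $\varrho_0$ as in \eqref{eq:smallness_cG} (and shrinking $\rho$ if needed) makes $\|T_1\| = \eta < 1/4$. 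The constant term $f$ is estimated in $L^2_T\dot H^{-1/2} + L^{4/3}_T L^2$: indeed $\sigma\Delta\cG_F \in L^2_T\dot H^1$ so after Duhamel through $S_{(1+\chi_0)/\tau,\sigma}$ and Lemma~\ref{lem:smoothin_bulk_force} it contributes $\lesssim \big(\tfrac{\tau}{1+\chi_0}\big)^{?}\|\cG_F\|_{L^2_T\dot H^3}$ to the $L^4_T\dot H^1$ norm, and $\partial_t\cG_F \in L^2_T\dot H^1 \hookrightarrow L^{4/3}_T L^2$ contributes $\lesssim \big(\tfrac{\tau}{1+\chi_0}\big)^{3/4}\|\cG_F\|_{\dot W^{1,2}_T\dot H^1}$; this is exactly where the smallness condition \eqref{eq:smallness_tau1}/\eqref{eq:smallness_tau} on $\tau$ enters, ensuring $\|v_{\mathrm{lin}}\|_X \le r/4$ with $r \sim \rho$.

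With these bounds in hand, points (a) and (c) follow from Proposition~\ref{prop:fixed_point}: in (a) one simply checks $\|u_0\|_{\Hud} \le \tfrac{\nu^{1/4}}{C}\rho$ and $\|(m_0,r_0)\|_{\Hud}\le \tfrac{\sigma^{1/4}}{C}\rho$ give (via the $L^4_T\dot H^1$ part of Lemma~\ref{lem:parabolic1}) $\|S v_0\|_X \le \rho$, while in (c) the point is that $(m_0,r_0)$ is only in $\dot H^1$ and possibly huge in $\Hud$, but Lemma~\ref{lem:parabolic2}'s alternative estimate $\|w\|_{L^4_T\dot H^{1}} \lesssim \gamma^{-1/4}\|w_0\|_{\dot H^1}$ for the damped components lets us absorb its size by taking $\tau$ small as in the first term of \eqref{eq:smallness_tau} (here $\gamma = 1/\tau$ or $(1+\chi_0)/\tau$), so again $\|v_{\mathrm{lin}}\|_X \le r/4$. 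Points (b) and (d) are the local versions: for arbitrary data one no longer assumes smallness but instead chooses $T^\star < T$ small enough that $\|v_{\mathrm{lin}}\|_{L^4_{T^\star}\dot H^1} \le r/4$, which is possible because $t\mapsto \|v_{\mathrm{lin}}\|_{L^4_{[0,t]}\dot H^1}$ is continuous and vanishes at $t=0$ (the free heat evolution of an $\Hud$ datum lies in $L^4_T\dot H^1$ globally, hence its tail over $[0,T^\star]$ is small). Finally, the fixed point $v$ produced lives in $B(0,4\rho)$ of $X$; standard bootstrapping through Lemma~\ref{lem:parabolic1} (now that the right-hand side of \eqref{eq:Shilomis2} is known to lie in $L^2_T\dot H^{-1/2}$) upgrades it to $\cC_T\Hud$. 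Uniqueness is part of Proposition~\ref{prop:fixed_point}.

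The main obstacle I anticipate is bookkeeping the interplay of the three different parameters $\nu,\sigma,\tau$ and the susceptibility $\chi_0$ so that the final smallness thresholds come out exactly in the stated algebraic form \eqref{eq:smallness_tau1}, \eqref{eq:smallness_tau} — in particular tracking the powers $(1+\chi_0)^{7/3}/\chi_0^{4/3}$ and $\varrho_0^{4/3}$, which arise from combining the $\mu^{-3/4}$ loss in Lemma~\ref{lem:parabolic2}/\ref{lem:smoothin_bulk_force} applied with $\gamma \sim (1+\chi_0)/\tau$, the $q = 4/3$ Hölder exponent forced by $\partial_t\cG_F$, and the embedding $L^2_T\dot H^1 \hookrightarrow L^{4/3}_T L^2$; a secondary subtlety is that when $\chi_0$ is large the operator $1+\chi_0\cQ$ is not sign-definite, but this has already been circumvented at the level of the reformulation \eqref{eq:Shilomis2}, where the $r$-equation carries the manifestly positive damping coefficient $(1+\chi_0)/\tau > 0$, so no sign issue survives into the fixed-point argument itself.
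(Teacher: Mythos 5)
Your proposal is correct and follows essentially the same route as the paper: the same mild reformulation, the same grouping of the nonlinearities into $T_1=\cT_{1,\RN{1}}+\cT_{1,\RN{2}}+\cT_1$, $T_2=\cT_{2,\ns}+\cT_2$, $T_3=\cT_3$, the same use of Lemmata \ref{lem:parabolic1}, \ref{lem:parabolic2} and \ref{lem:smoothin_bulk_force} to bound the free evolution, the force $g$ and the multilinear maps, and the same identification of where each smallness hypothesis (on $u_0$, on $\varrho_0$, and the two constraints on $\tau$) enters before invoking Proposition \ref{prop:fixed_point}. The only items left open are the explicit exponents you mark with question marks (the paper gets $\mu^{-3/4}$ for $B_{\ns}$ and $\cL_j$, $\mu^{-1/2}$ for $\cN_p$, and $\tau^{3/4}(1+\chi_0)^{-7/4}$ for the force), which is bookkeeping rather than a gap.
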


\begin{rem}\label{rem:prop→thm}
Let us note that if we prove Proposition \ref{prop:existence_unique_solution} than we prove as well Theorem \ref{thm:main_thm} with the substitution
\begin{equation*}
\pare{u, m , r}\mapsto \pare{u, M , H}, 
\end{equation*}
defined explicitly in \eqref{eq:change_unknown}. \fine
\end{rem}

\begin{rem}
We will prove only the point \ref{en:point_we_prove_in_the_ecistence_thm} since the other points are variations of the same argument which are simple to the reader familiar with the construction of solutions for the \NS equations via a fixed point theorem.~\fine
\end{rem}

\begin{rem}
Let us point out that if we allow $ T=\infty $ in the statement of Proposition \ref{prop:existence_unique_solution} (i.e. it suffice to consider $ \cG_F $ to be "small" in $ \LqHu $) the points a and \ref{en:point_we_prove_in_the_ecistence_thm} provide a \textit{global} solution of \eqref{eq:Shilomis2}, in particular the point \ref{en:point_we_prove_in_the_ecistence_thm} provides a global  solution imposing a smallness hypothesis on $ u_0 $ \textit{only} in $ \Hud $ and assuming $ m_0, r_0 $ be \textit{arbitrarily large or unbounded} in $ \Hud $.~\fine
\end{rem}

The proof of the point \ref{en:point_we_prove_in_the_ecistence_thm} of Proposition \ref{prop:existence_unique_solution} is an application of the fixed point theorem stated in Proposition \ref{prop:fixed_point}; conceptually there is no great difference with the more familiar construction of a unique solution in a critical space for the incompressible \NS equations, there are though two main difficulties which we want to consider
\begin{itemize}

\item Indeed the nonlinear estimates for \eqref{eq:Shilomis2} are  more lengthy and complicated than the transport bilinear form of the incompressible \NS equations, 

\item Secondly, and more important in our context, we want to give a proof which provides an existence result which is uniform-in-time with respect to the physical parameter $ \tau \in\pare{0, \tau_0 } $ for some small $ \tau_0>0 $.

\end{itemize}

The proof is hence divided as follows:

\begin{itemize}

\item In Section \ref{sec:mild_form_Shilomis} we reformulate the system \eqref{eq:Shilomis2} in a suitable mild form. Such passage consist mostly in  computations which have to be carried out in detail due to the many nonlinearities appearing in system \eqref{eq:Shilomis2}, 

\item In Section \ref{sec:par_est_for_S} we provide some nonlinear parabolic estimates for the six generic classes of nonlinearities which compose all the nonlinear terms of Shilomis system \eqref{eq:Shilomis2}, as explained in Remark \ref{rem:explanation_nonlinear_terms}. Indeed the linear parabolic estimates carried out in the introductory Section \ref{sec:linear_parabolic_estimates} will be the main tool in order to prove the nonlinear estimates required, 

\item In Section \ref{sec:nonlinear_bounds_Shilomis} we apply the nonlinear parabolic bounds deduced in Section \ref{sec:par_est_for_S} to the mild form of \eqref{eq:Shilomis2} deduced in Section \ref{sec:mild_form_Shilomis}, 

\item Finally in Section \ref{sec:fixed_poin_application} we apply the nonlinear bounds for the Shilomis system deduced in Section \ref{sec:nonlinear_bounds_Shilomis} in order to apply the fixed point theorem stated in Proposition \ref{prop:fixed_point} and to deduce the existence of a unique solution of \eqref{eq:Shilomis2} in critical space.

\end{itemize}

\begin{rem}
Since the proof of Proposition \ref{prop:existence_unique_solution} relies on a fixed point theorem it is known that such result generally relies on a smallness hypothesis on which it is possible to construct a perturbative argument. \\
The smallness hypothesis appearing in Proposition \ref{prop:existence_unique_solution} are rather unusual, hence we would like to comment them:
\begin{enumerate}[$ \triangleright $]

\item The smallness hypothesis on the initial velocity flow \eqref{eq:smallness_vel_flow} is rather standard in the theory of \NS equations. 

\item The smallness hypothesis \eqref{eq:smallness_cG} can look peculiar in a first stance, but it is inevitable. It says in fact that the external magnetic field cannot pump too much $ \LqHu $ energy in the system. This is reasonable since in the equation \eqref{eq:Shilomis2} there are terms of the form $ \cG_F \cdot\nabla\cG_F $, if such term is arbitrarily large it will break down any smallness condition on which the perturbative argument for \NS equations is based; relaxing \eqref{eq:smallness_cG} is hence impossible in our context. 

\item As a matter of facts the in the point \ref{en:point_we_prove_in_the_ecistence_thm} we consider initial data $ m_0, r_0 $ arbitrarily large in $ \Hud $ and $ \dot{H}^1 $. Such hypothesis may look as unreasonable at a first sight, but we want to make notice to the reader that the smallness hypothesis \eqref{eq:smallness_tau} compensates to such lack of smallness for the initial data. In a nutshell it says that if the damping coefficient is sufficiently large the $ \dot{H}^1 $ norm of $ \pare{m, r} $ is damped with sufficient vigor so that $ \pare{m, r} $ turns out to be "small" in the space $ \LqHu $, without hence violating the smallness principle on which any perturbative method is based. 

\end{enumerate}
\end{rem}

\subsection{Reformulation of \eqref{eq:Shilomis2} in an appropriate mild form }\label{sec:mild_form_Shilomis}

Lt us rewrite the system \eqref{eq:Shilomis2} in the mild form

\begin{equation}
\label{eq:Shilomis_mild}
\left\lbrace
\begin{aligned}
 & u \pare{x, t} = S_{0, \nu}\pare{\partial, t} u_0\pare{x} + \int_0^t S_{0, \nu}\pare{\partial, t -t'} \cN_u\pare{x, t'} \d t', \\
 & m \pare{x, t} = S_{\frac{1}{\tau}, \sigma}\pare{\partial, t} m_0\pare{x} + \int_0^t S_{\frac{1}{\tau}, \sigma}\pare{\partial, t -t'} \cN_m\pare{x, t'} \d t', \\
 & r \pare{x, t} = S_{\frac{1+\chi_0}{\tau}, \sigma}\pare{\partial, t} r_0\pare{x} + \int_0^t S_{\frac{1+\chi_0}{\tau}, \sigma}\pare{\partial, t -t'} \cN_r\pare{x, t'} \d t'
 + \int_0^t S_{\frac{1+\chi_0}{\tau}, \sigma}\pare{\partial, t -t'} f\pare{x, t'} \d t', 
\end{aligned}
\right.
\end{equation}
where
\begin{equation}
\label{eq:nonlinearities_Shilomis_mild}
\begin{aligned}
& \begin{multlined}
\cN_u = - \cP \pare{u\cdot\nabla u } + \cP\bra{ \pare{m+r + \frac{\chi_0}{1+\chi_0} \cG_F}\cdot \nabla \pare{-r + \frac{1}{1+\chi_0} \cG_F}}\\
 			+ \frac{1}{2}\cP \ \curl \bra{\pare{m+r + \frac{\chi_0}{1+\chi_0} \cG_F} \times \pare{-r + \frac{1}{1+\chi_0} \cG_F}},
\end{multlined}\\[5mm]
& \begin{multlined}
\cN_m = -\cP \bra{u\cdot\nabla \pare{ m+r + \frac{\chi_0}{1+\chi_0} \cG_F }} + \frac{1}{2}\cP \bra{ \pare{\curl \ u} \times \pare{m+r + \frac{\chi_0}{1+\chi_0} \cG_F}  } \\
			- \cP \set{ \pare{m+r + \frac{\chi_0}{1+\chi_0} \cG_F} \times \bra{ \pare{m+r + \frac{\chi_0}{1+\chi_0} \cG_F} \times \pare{-r + \frac{1}{1+\chi_0} \cG_F}} },
\end{multlined} \\[5mm]
& \begin{multlined}
\cN_r = -\cQ \bra{u\cdot\nabla \pare{ m+r + \frac{\chi_0}{1+\chi_0} \cG_F }} + \frac{1}{2}\cQ \bra{ \pare{\curl \ u} \times \pare{m+r + \frac{\chi_0}{1+\chi_0} \cG_F}  } \\
			\hspace{8mm}  - \cQ \set{ \pare{m+r + \frac{\chi_0}{1+\chi_0} \cG_F} \times \bra{ \pare{m+r + \frac{\chi_0}{1+\chi_0} \cG_F} \times \pare{-r + \frac{1}{1+\chi_0} \cG_F}} }
			,\end{multlined}
\\[5mm]
&f = -\frac{\chi_0}{1+\chi_0}\pare{ \partial_t \cG_F - \sigma \Delta \cG_F \Big.}.
\\
\end{aligned}
\end{equation}

We will now reformulate the integral system \eqref{eq:Shilomis_mild} in an even more generic form with which will be easier to study. 
Let us now denote as $ U = \pare{u, m, r} $, the system \eqref{eq:Shilomis_mild} can alternatively be written as
\begin{equation}\label{eq:Shilomis_mild2}
U\pare{x, t} = \cS \pare{\partial, t}U_0\pare{x} + \mathcal{T}\bra{U}\pare{x, t} + g \pare{x, t}, 
\end{equation}
where
\begin{equation}\label{eq:cSU0}
\cS \pare{\partial, t}U_0\pare{x} = \pare{
\begin{array}{c}
S_{0, \nu}\pare{\partial, t} u_0\pare{x} \\
 S_{\frac{1}{\tau}, \sigma}\pare{\partial, t} m_0\pare{x} \\
 S_{\frac{1+\chi_0}{\tau}, \sigma}\pare{\partial, t} r_0\pare{x}
\end{array}
}, 
\end{equation}
while
\begin{equation}\label{eq:def_cT}
\cT\bra{U} = \left( \sum_{p=1}^3\cT_{p}\bra{U} \right) + \cT_{2, \ns} \bra{U} + \cT_{1, \RN{1}} \bra{U} + \cT_{1, \RN{2}} \bra{U}, 
\end{equation}
where
\begin{equation}\label{eq:cT2NS}
\cT_{2, \ns} \bra{U} = \pare{
\begin{array}{c}
\displaystyle \int_0^t S_{0, \nu}\pare{\partial, t -t'} \set{- \cP \pare{\big.u\cdot\nabla u } - \cP\bra{ \big. \pare{m+r }\cdot \nabla {r }}
 			- \frac{1}{2}\cP \ \curl \bra{\big.\pare{m+r } \times {r }}}\pare{t'} \d t' \\[5mm]
 			\displaystyle \int_0^t S_{\frac{1}{\tau}, \sigma}\pare{\partial, t -t'} \set{-\cP \bra{  \big. u\cdot\nabla \pare{ m+r }} + \frac{1}{2}\cP \bra{ \big. \pare{\curl \ u} \times \pare{m+r }  } }\pare{t'} \d t' \\[5mm]
 		\displaystyle \int_0^t S_{\frac{1+\chi_0}{\tau}, \sigma}\pare{\partial, t -t'} \set{ -\cQ \bra{u\cdot\nabla \pare{ m+r }} + \frac{1}{2}\cQ \bra{ \pare{\curl \ u} \times \pare{m+r    }}}\pare{t'}\d t'
\end{array}
},
\end{equation}
\begin{equation}\label{eq:cT1I}
\cT_{1, \RN{1}}\bra{U} = \pare{
\begin{array}{c}
\displaystyle \frac{1}{2 \pare{ 1+\chi_0}} \int_0^t S_{0, \nu}\pare{\partial, t -t'} \set{\Big.  
\cP\bra{ \pare{m+\pare{ 1-\chi_0}r }\cdot \nabla {  \cG_F}}
 			+ \cP  \bra{\pare{m+\pare{1+\chi_0}r}\div\ \cG_F}}\pare{t'} \d t' \\[5mm]
 			\displaystyle-\frac{\chi_0}{1+\chi_0}\int_0^t S_{\frac{1}{\tau}, \sigma}\pare{\partial, t -t'} \set{\cP\bra{\big. u\cdot\nabla \cG_F}}\pare{t'} \d t' \\[5mm]
 			\displaystyle-\frac{\chi_0}{1+\chi_0}\int_0^t S_{\frac{1+\chi_0}{\tau}, \sigma}\pare{\partial, t -t'} \set{\cQ \bra{\big. u\cdot\nabla \cG_F}}\pare{t'} \d t' 
\end{array}
},
\end{equation}
and let us remark how the operator  $ \cT_{1, \RN{1}}\bra{U} $ acts as a derivative  on the function $ \cG_F $ only, while the operator $  \cT_{1, \RN{2}}\bra{U} $ is defined as
\begin{equation}\label{eq:cT1II}
 \cT_{1, \RN{2}}\bra{U} = \pare{
\begin{array}{c}
\displaystyle \frac{1}{2 \pare{ 1+\chi_0}} \int_0^t S_{0, \nu}\pare{\partial, t -t'} \set{\Big. -\cG_{F} \div\pare{m+\pare{1+\chi_0}r} -\cG_{F}\cdot \nabla\pare{m+\pare{1+\chi_0}r}}\pare{t'} \d t'\\[5mm]
 			\displaystyle \frac{\chi_0}{2\pare{ 1+\chi_0}}\int_0^t S_{\frac{1}{\tau}, \sigma}\pare{\partial, t -t'} \set{\Big. \cP\bra{\curl u \times \cG_F}}\pare{t'} \d t' \\[5mm]
 			\displaystyle\frac{\chi_0}{2\pare{ 1+\chi_0}}\int_0^t S_{\frac{1+\chi_0}{\tau}, \sigma}\pare{\partial, t -t'} \set{\Big. \cQ\bra{\curl u \times \cG_F}}\pare{t'} \d t' 
\end{array} 
 }. 
\end{equation}

\noindent We now define the $ p $--linear operators $ \cT_p\bra{U} $; 
\begin{equation}\label{eq:cT1}
\cT_1\bra{U} = \pare{
\begin{array}{c}
0 \\[5mm]
\displaystyle \frac{\chi_0}{\pare{1+\chi_0}^2}\int_0^t S_{\frac{1}{\tau}, \sigma}\pare{\partial, t -t'} \set{ \cP\bra{\Big.\cG_F \times\pare{\big. \pare{m+\pare{1+\chi_0}r}\times \cG_F}}}\pare{t'} \d t' \\[5mm]
\displaystyle \frac{\chi_0}{\pare{1+\chi_0}^2}\int_0^t S_{\frac{1+\chi_0}{\tau}, \sigma}\pare{\partial, t -t'} \set{ \cQ\bra{\Big.\cG_F \times\pare{\big. \pare{m+\pare{1+\chi_0}r}\times \cG_F}}}\pare{t'} \d t'
\end{array}
},
\end{equation}
\begin{equation}\label{eq:cT2}
\cT_2\bra{U} = \pare{
\begin{array}{c}
0 \\[5mm]
\displaystyle \frac{\chi_0}{1+\chi_0} \int_0^t S_{\frac{1}{\tau}, \sigma}\pare{\partial, t -t'} \set{  \cP\bra{\Big.\cG_F \times\pare{\big. r\times m}} +  \cP \bra{\Big. \pare{m+r}\times\bra{\pare{m+\pare{1+\chi_0}r}\times\cG_F}}}\pare{t'} \d t' \\[5mm]
\displaystyle\frac{\chi_0}{1+\chi_0} \int_0^t S_{\frac{1+\chi_0}{\tau}, \sigma}\pare{\partial, t -t'} \set{  \cQ\bra{\Big.\cG_F \times\pare{\big. r\times m}} +  \cQ \bra{\Big. \pare{m+r}\times\bra{\pare{m+\pare{1+\chi_0}r}\times\cG_F}}}\pare{t'} \d t'
\end{array}
},
\end{equation}
\begin{equation}\label{eq:cT3}
\cT_3\bra{U} = \pare{
\begin{array}{c}
0 \\[5mm]
\displaystyle \int_0^t S_{\frac{1}{\tau}, \sigma}\pare{\partial, t -t'} \set{ \cP\bra{\pare{m+r}\times\pare{r\times {m}}\Big. }}\pare{t'} \d t' \\[5mm]
\displaystyle \int_0^t S_{\frac{1+\chi_0}{\tau}, \sigma}\pare{\partial, t -t'} \set{ \cQ\bra{\pare{m+r}\times\pare{r\times {m}}\Big. }}\pare{t'} \d t'
\end{array}
}.
\end{equation}

\noindent
While finally we can define the outer force $ g $ as
\begin{equation}\label{eq:def_outer_g}
g = \pare{
\begin{array}{c}
\displaystyle
\frac{\chi_0}{\pare{1+\chi_0}^2} \int_0^t S_{0, \nu}\pare{\partial, t-t'}\left( \cG_F\cdot\nabla \cG_{F} \right)\pare{t'} \d t' \\[5mm]
0 \\[5mm]
\displaystyle -\frac{\chi_0}{1+\chi_0}\int_0^tS_{\frac{1+\chi_0}{\tau}, \sigma}\pare{\partial, t - t'}\pare{ \partial_t \cG_F - \sigma \Delta \cG_F \Big.}\pare{t'} \d t'
\end{array}
}=\pare{\begin{array}{c}
g_1 \\ 0 \\ g_2
\end{array} }
\end{equation}

Despite the long and tedious computations we can already understand why we decided to rewrite system \eqref{eq:Shilomis_mild} in the abstract form \eqref{eq:Shilomis_mild2}. the integral operators defined explicitly in \eqref{eq:cT2NS}--\eqref{eq:cT3} are all of the following form: a time convolution of a nonlinearity which falls within one of the six cases explained in Remark \ref{rem:explanation_nonlinear_terms} with one operator of the form $ S_{\mu, \gamma} \pare{\partial} $ defined in \eqref{eq:propagator_Sgammamu}.

\subsection{Parabolic estimates for generalized Shilomis-type nonlinearities}\label{sec:par_est_for_S}

It suffice hence to check that the nonlinear integral operator defined by the right hand side of \eqref{eq:Shilomis_mild2} is continuous in $ \LqHu $ in order to apply Proposition \ref{prop:fixed_point} and to deduce the existence of a fixed point for the integral equation \eqref{eq:Shilomis_mild}. Indeed to prove the continuity of each term in the nonlinearity given by \eqref{eq:def_cT} would be a lengthy and tedious work. On the other hand we can exploit the observations deduced in Remark \ref{rem:explanation_nonlinear_terms}: every term appearing in \eqref{eq:nonlinearities_Shilomis_mild} belongs to one of at most six classes of nonlinearities, this significantly simplifies the process.

\begin{prop}\label{prop:nonlinear_bounds_generic}
Let $ v, v_1, v_2, v_3, G \in \LqHu $, let $ \pare{\gamma, \mu} \in \left[0, \infty\right) \times \pare{0, \infty} $ and let $ B_{\ns}, \cL_j, \cN_p, \ j=1, 2, \ p=1, 2, 3 $ be as in Remark \ref{rem:explanation_nonlinear_terms}, then setting $ S_{\gamma, \mu} $ the propagator defined in \eqref{eq:propagator_Sgammamu} the following inequalities hold true
\begin{enumerate}

\item \label{enum:nonlinear_bounds_generic1} $ \displaystyle\norm{S_{\gamma, \mu}\pare{\partial}\star_t B_{\ns}\pare{v_1, v_2}}_{\LqHu}\leqslant \frac{C}{\mu ^{3/4}} \norm{v_1}_{\LqHu}\norm{v_2}_{\LqHu} $, 

\item \label{enum:nonlinear_bounds_generic2} $ \displaystyle\norm{S_{\gamma, \mu}\pare{\partial}\star_t \cL_j \pare{v}}_{\LqHu}\leqslant \frac{C}{\mu ^{3/4}} \norm{G}_{\LqHu}\norm{v}_{\LqHu} $, 

\item \label{enum:nonlinear_bounds_generic3} $ \norm{S_{\gamma, \mu}\pare{\partial}\star_t \cN_p \pare{v_1, \ldots , v_p}}_{\LqHu}\leqslant \displaystyle \frac{C}{\mu ^{1/2}} \pare{ \prod_{i=1}^p \norm{v_i}_{\LqHu}} \times \norm{G}^{3-p}_{\LqHu} $.
\end{enumerate}
\end{prop}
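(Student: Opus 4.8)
The plan is to reduce each of the three estimates to the linear parabolic bounds of Section~\ref{sec:linear_parabolic_estimates} by identifying, for every nonlinear term, a suitable space-time norm that sits in the hypotheses of one of Lemma~\ref{lem:parabolic1}, Lemma~\ref{lem:parabolic2}, Lemma~\ref{lem:smoothin_bulk_force}. The unifying observation is that the convolution $S_{\gamma,\mu}\pare{\partial}\star_t N$ is exactly the solution of \eqref{eq:parabolic_linear} with zero initial data, damping $\gamma$, diffusion $\mu$, and forcing $F=N$; hence it suffices to control the $\LqHu$-norm of such a solution by the appropriate norm of $N$, uniformly in $\gamma\geqslant 0$. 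Since all the claimed bounds involve only $\mu$ and not $\gamma$, I would apply the $\gamma=0$ (or $\gamma\geqslant 0$) versions of the linear lemmas, i.e.\ Lemma~\ref{lem:parabolic1} for the first-order-derivative nonlinearities and Lemma~\ref{lem:parabolic2} with a suitable $q$ for the $\cN_p$ terms, and simply discard the damping gain.

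\textbf{Steps for \ref{enum:nonlinear_bounds_generic1} and \ref{enum:nonlinear_bounds_generic2}.} First I would note that a term of the form $B_{\ns}\pare{v_1,v_2}$ or $\cL_j\pare{v}$ has components $v_i\,q\pare{\partial}w_j$ with $q\pare{\partial}$ a homogeneous Fourier multiplier of order one; writing $q\pare{\partial}w$ costs one derivative, so by the product rule for homogeneous Sobolev spaces (Lemma~\ref{lem:Sob_product_rules}) in dimension $d=3$, with $s=t=1/2$, the pointwise product maps $\dot H^{1/2}\times\dot H^{1/2}\to\dot H^{-1/2}$; combining, $B_{\ns}\pare{v_1,v_2}\pare{t}\in\dot H^{1/2-1-1/2}=\dot H^{-1}$ with $\norm{B_{\ns}\pare{v_1,v_2}\pare{t}}_{\dot H^{-1/2-1/2}}\lesssim \norm{v_1\pare{t}}_{\dot H^1}\norm{v_2\pare{t}}_{\dot H^{1/2}}$, and symmetrising between the roles of $\dot H^1$ and $\dot H^{1/2}$. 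Taking this in time with H\"older ($L^4_T\cdot L^4_T\hookrightarrow L^2_T$) yields $B_{\ns}\pare{v_1,v_2}\in L^2_T\dot H^{-1/2}$ with the product of the two $\LqHu$ norms. Now apply Lemma~\ref{lem:parabolic1} with $s=1/2$, $F=B_{\ns}\pare{v_1,v_2}\in L^2_T\dot H^{s-1}=L^2_T\dot H^{-1/2}$ and zero initial data: the second inequality there gives $\norm{S_{\gamma,\mu}\pare{\partial}\star_t B_{\ns}\pare{v_1,v_2}}_{\LqHu}\leqslant C\mu^{-1/4}\cdot\mu^{-1/2}\norm{F}_{L^2_T\dot H^{-1/2}}\leqslant C\mu^{-3/4}\norm{v_1}_{\LqHu}\norm{v_2}_{\LqHu}$, which is \ref{enum:nonlinear_bounds_generic1}. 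The bound \ref{enum:nonlinear_bounds_generic2} is identical with $w$ replaced by $G$ (or $v$ by $G$ in the $\cL_1$ vs $\cL_2$ case), so one of the two factors is $\norm{G}_{\LqHu}$.

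\textbf{Steps for \ref{enum:nonlinear_bounds_generic3}.} For $\cN_p$ the structure is a genuine pointwise product of $p$ copies of $v$ and $3-p$ copies of $G$, with no derivative. Iterating Lemma~\ref{lem:Sob_product_rules} in $d=3$ on a product of $k=3$ factors each in $\dot H^1$ one lands in $\dot H^{3\cdot 1 - 2\cdot 3/2}=\dot H^0=L^2$, with norm bounded by the product of the three $\dot H^1$ norms; hence $\cN_p\pare{v_1,\dots,v_p}\pare{t}\in L^2$ and $\norm{\cN_p\pare{\cdots}\pare{t}}_{L^2}\lesssim\pare{\prod_{i=1}^p\norm{v_i\pare{t}}_{\dot H^1}}\norm{G\pare{t}}_{\dot H^1}^{3-p}$. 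Taking $L^{4/3}_T$ in time (H\"older: the product of three $L^4_T$ functions lies in $L^{4/3}_T$) gives $\cN_p\in L^{4/3}_T L^2$. Now invoke Lemma~\ref{lem:parabolic2} with $s=1/2$, $q=4/3$ so that $s_q=2\pare{1-3/4}=1/2$ and $\dot H^{s-s_q}=\dot H^0=L^2$, again with zero initial data: the stated bound reduces to $\norm{w}_{L^4_T\dot H^1}\leqslant C\mu^{-3/4}\norm{\cN_p}_{L^{4/3}_T L^2}\leqslant C\mu^{-3/4}\pare{\prod_{i=1}^p\norm{v_i}_{\LqHu}}\norm{G}_{\LqHu}^{3-p}$. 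I should double-check the power of $\mu$ here, since the statement advertises $\mu^{-1/2}$ rather than $\mu^{-3/4}$; a $\mu^{-3/4}$ bound trivially implies a $\mu^{-1/2}$ bound only when $\mu\leqslant 1$, so either the intended reading is that $\mu$ ranges in a bounded set (as is the case for the physical parameters $\nu,\sigma$ after the normalisations), or the $\cN_p$ terms should be estimated through the $L^\infty_T\dot H^{1/2}\cap L^2_T\dot H^{3/2}$ route of Lemma~\ref{lem:parabolic1} which produces a cleaner exponent. I would reconcile this by treating the $\cN_p$ forcing as an element of $L^2_T\dot H^{-1/2}$ instead: by Lemma~\ref{lem:Sob_product_rules} the triple product also lies in $\dot H^{-1/2}$ if we spend the available regularity differently (two factors in $\dot H^1$, one in $\dot H^{1/2}$ giving $\dot H^{1+1/2-3/2}\cdot$ then $\times\dot H^1\to\dot H^{-1/2}$ is not quite it, so more care is needed), in time $L^2_T$ by H\"older, and then Lemma~\ref{lem:parabolic1} yields the $\mu^{-1/2}\cdot\mu^{-1/2}=\mu^{-1}$\dots{} this is the one place where the bookkeeping of Sobolev exponents versus time integrability has to be done precisely.

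\textbf{Main obstacle.} The conceptual content is straightforward — every nonlinearity is a product of one or two derivatives-worth of scaling-critical factors, and the linear parabolic machinery is already in place — so the real work, and the only place errors can hide, is the precise allocation of Sobolev regularity and time-integrability budgets for each of the three classes so that the forcing lands exactly in the hypothesis space of the linear lemma that produces the advertised power of $\mu$. In particular, matching the $\mu^{-1/2}$ in \ref{enum:nonlinear_bounds_generic3} (rather than $\mu^{-3/4}$) requires choosing the $L^2_T\dot H^{-1/2}$ pathway and verifying via Lemma~\ref{lem:Sob_product_rules} that the triple product genuinely sits in $L^2_T\dot H^{-1/2}$ with the claimed trilinear bound; I expect this Hölder-and-product-rule verification to be the crux.
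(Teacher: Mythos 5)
Your proposal is correct and follows essentially the same route as the paper: each time-convolution is read as the Duhamel solution of \eqref{eq:parabolic_linear} with zero initial data, Lemma \ref{lem:parabolic1} with $s=\tfrac12$ handles the derivative nonlinearities $B_{\ns}$ and $\cL_j$ once the product rule and H\"older in time place them in $L^2_T\dot H^{-1/2}$, and Lemma \ref{lem:parabolic2} with $q=4/3$ handles $\cN_p$ once H\"older and $\dot H^1\pare{\bR^3}\hra L^6$ place it in $L^{4/3}_TL^2$.

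Two clarifications on the points where you hesitated. In items \ref{enum:nonlinear_bounds_generic1}--\ref{enum:nonlinear_bounds_generic2} the correct regularity allocation is the one your final display actually uses, not the intermediate ``$\dot H^{1/2-1-1/2}$'' bookkeeping: put the undifferentiated factor in $\dot H^1$ and the differentiated one in $\dot H^0=L^2$, so that Lemma \ref{lem:Sob_product_rules} with $\pare{s,t}=\pare{1,0}$, $d=3$, lands in $\dot H^{-1/2}$ and $L^4_T\cdot L^4_T\hra L^2_T$ does the rest. As for your worry about $\mu^{-1/2}$ versus $\mu^{-3/4}$ in item \ref{enum:nonlinear_bounds_generic3}: you have spotted a genuine imprecision (the paper's own proof simply cites Lemma \ref{lem:parabolic2} and writes $\mu^{-1/2}$), but the fix is not the $L^2_T\dot H^{-1/2}$ detour, which indeed fails --- the triple product of $\dot H^1\pare{\bR^3}$ functions only lands in $L^2$ in space and $L^{4/3}$ in time, and $L^2$ does not embed into $\dot H^{-1/2}$. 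The resolution is that the $\mu^{-3/4}$ in the statement of Lemma \ref{lem:parabolic2} is the $q=2$ value carried over verbatim to all $q$; rerunning its proof for $q=4/3$ (so $s_q=\tfrac12$), the kernel bound $\pare{\gamma+\mu\av{\xi}^2}^{-s_q}\leqslant\mu^{-s_q}\av{\xi}^{-2s_q}$ produces a factor $\mu^{-s_q/2}=\mu^{-1/4}$ in \eqref{eq:interpolation2}, the Young inequality in the energy estimate likewise yields $\mu^{-1/4}$ inside \eqref{eq:interpolation1}, and the final interpolation gives exactly $\mu^{-1/4-1/4}=\mu^{-1/2}$. In any case the distinction is harmless for the application, since the fixed point argument only ever invokes these bounds through $\min\set{c^{1/2},c^{3/4}}$.
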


\begin{proof}

\begin{enumerate}

\item Indeed $ S_{\gamma, \mu}\pare{\partial}\star_t B_{\ns}\pare{v_1, v_2} $ can be though as the unique solution of \eqref{eq:parabolic_linear} when $ w_0 = 0 $ and $ F = B_{\ns}\pare{v_1, v_2} $, hence applying Lemma \ref{lem:parabolic1} we deduce that $ \norm{S_{\gamma, \mu}\pare{\partial}\star_t B_{\ns}\pare{v_1, v_2}}_{\LqHu} \leqslant \frac{C}{\mu ^{3/4}} \norm{  B_{\ns}\pare{v_1, v_2} }_{L^2_T\dot{H}^{-1/2}} $. Moreover since every term in $ B_{\ns} $ is of the form $ v_i \ q_{i, j}^{\ns, \ell}\pare{\partial} v_j $ where $  q_{i, j}^{\ns, \ell} $ homogeneous Fourier multiplier of order one applying Lemma \ref{lem:Sob_product_rules} we deduce
\begin{equation*}
\begin{aligned}
\norm{  B_{\ns}\pare{v_1, v_2} }_{L^2_T\dot{H}^{-1/2}} & \leqslant C \pare{ \norm{v_1 \otimes \nabla v_2}_{L^2_T\dot{H}^{-1/2}} + \norm{\nabla v_1 \otimes  v_2}_{L^2_T\dot{H}^{-1/2}}}, \\
&\leqslant C\norm{v_1}_{\LqHu}\norm{v_2}_{\LqHu}, 
\end{aligned}
\end{equation*}
proving the first inequality.

\item Similarly as above $ S_{\gamma, \mu}\pare{\partial}\star_t \cL_j \pare{v} $ is the unique solution of \eqref{eq:parabolic_linear} when $ w_0=0 $ and $ F = \cL_j \pare{v}  $, whence \linebreak$ \norm{S_{\gamma, \mu}\pare{\partial}\star_t  \cL_j \pare{v}}_{\LqHu} \leqslant \frac{C}{\mu ^{3/4}} \norm{   \cL_j \pare{v} }_{L^2_T\dot{H}^{-1/2}} $. Using again Lemma \ref{lem:Sob_product_rules} we deduce
\begin{equation*}
\begin{aligned}
\norm{  \cL_j \pare{v}}_{L^2_T\dot{H}^{-1/2}} & \leqslant C \pare{ \norm{G \otimes \nabla v}_{L^2_T\dot{H}^{-1/2}} + \norm{\nabla G \otimes  v}_{L^2_T\dot{H}^{-1/2}}}, \\
&\leqslant C \norm{G }_{\LqHu}\norm{v}_{\LqHu}, 
\end{aligned}
\end{equation*}
concluding the proof of the second inequality. 

\item Similarly as above we can deduce the estimate
\begin{equation*}
 \norm{S_{\gamma, \mu}\pare{\partial}\star_t  \cN_p \pare{v_1, \ldots , v_p}}_{\LqHu} \leqslant \frac{C}{\mu ^{1/2}} \norm{   \cN_p \pare{v_1, \ldots , v_p} }_{L^{4/3}_TL^2} , 
\end{equation*}
using Lemma \ref{lem:parabolic2}. Whence, since
\begin{equation*}
\cN_p \pare{v_1, \ldots , v_p} \sim v^{\otimes p} \otimes G^{\otimes\pare{3-p}}, 
\end{equation*}
using repeatedly H\"older inequality and the continuous embedding $ \dot{H}^1\hra L^6 $ we deduce
\begin{equation*}
\norm{   \cN_p \pare{v_1, \ldots , v_p} }_{L^{4/3}_TL^2}  \leqslant \pare{ \prod_{i=1}^p \norm{v_i}_{\LqHu}} \times \norm{G}^{3-p}_{\LqHu}. 
\end{equation*}
\end{enumerate}
\end{proof}

\subsection{Bounds for the system \eqref{eq:Shilomis_mild2}} \label{sec:nonlinear_bounds_Shilomis}

As mentioned above the scope of the present section is to apply the nonlinear bounds proved in Section \ref{sec:nonlinear_bounds_Shilomis} to the Shilomis system in mild form \eqref{eq:Shilomis_mild2}. Such bounds will be provided systematically in the present section. \\

At first we need to estimate the contributions provided by the initial datum:
\begin{prop}\label{prop:smallness_initial_data}
Let $ u_0\in \Hud, m_0, r_0\in \dot{H}^1 $, then
\begin{enumerate}

\item $ \displaystyle\norm{S_{0, \nu}\pare{\partial} u_0}_{\LqHu} \leqslant\frac{C}{\nu^{1/4}} \norm{u_0}_{\Hud} $, 

\item $ \displaystyle\norm{S_{\frac{1}{\tau}, \sigma}\pare{\partial} m_0}_{\LqHu} \leqslant {C}{\tau^{1/4}} \norm{m_0}_{\dot{H}^{1}} $, 

\item $ \displaystyle\norm{S_{\frac{1+\chi_0}{\tau}, \sigma}\pare{\partial} r_0}_{\LqHu} \leqslant \frac{C\tau^{1/4}}{\pare{1+\chi_0}^{1/4}} \norm{r_0}_{\dot{H}^{1}} $.

\end{enumerate}
\end{prop}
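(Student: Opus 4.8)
The plan is to read off all three bounds directly from the linear parabolic estimates of Section~\ref{sec:linear_parabolic_estimates}, applied with vanishing bulk force; the only point requiring care is the choice, in each case, of the favourable branch of the minimum in \eqref{eq:bound_w0_L4Hs}, together with the bookkeeping of the parameters $\nu$, $\sigma$, $\tau$ and $1+\chi_0$.

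For the first estimate I would note that $S_{0, \nu}\pare{\partial} u_0$ is exactly the solution of \eqref{eq:parabolic_linear} with $\gamma = 0$, $\mu = \nu$, $w_0 = u_0$ and $F \equiv 0$; Lemma~\ref{lem:parabolic1} with $s = \frac{1}{2}$ (so that $\dot{H}^{s+\frac{1}{2}} = \dot{H}^1$) then gives at once $\norm{S_{0, \nu}\pare{\partial} u_0}_{\LqHu} \leqslant C \nu^{-1/4} \norm{u_0}_{\Hud}$. Alternatively one reproduces the computation \eqref{eq:bound_w0_L4Hs} with $\gamma = 0$: Minkowski's integral inequality moves the $L^4_t$ norm inside the $\xi$-integral, the time integral of $e^{-4 t \nu \av{\xi}^2}$ produces the factor $\pare{\nu \av{\xi}^2}^{-1/2}$, and one is left with $\nu^{-1/2}\int_{\bR^3} \av{\xi}\, \av{\hat{u}_0\pare{\xi}}^2\,\d\xi = \nu^{-1/2}\norm{u_0}_{\Hud}^2$, whose square root is the claimed inequality.

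For the second and third estimates I would exploit the damping. Apply \eqref{eq:bound_w0_L4Hs} --- the last display in the proof of Lemma~\ref{lem:parabolic2} --- again with $s = \frac{1}{2}$, so that $\dot{H}^{s+\frac{1}{2}} = \dot{H}^1$, and retain the \emph{first} term of the minimum, namely $C \gamma^{-1/4}\norm{w_0}_{\dot{H}^1}$: this is the only admissible branch under the sole hypothesis $m_0, r_0 \in \dot{H}^1$, and it is precisely the one encoding the decay produced by a large damping coefficient. Taking $\gamma = 1/\tau$ and $\mu = \sigma$ yields $\norm{S_{\frac{1}{\tau}, \sigma}\pare{\partial} m_0}_{\LqHu} \leqslant C \tau^{1/4}\norm{m_0}_{\dot{H}^1}$, while taking $\gamma = \pare{1+\chi_0}/\tau$ and $\mu = \sigma$ gives $\gamma^{-1/4} = \tau^{1/4}\pare{1+\chi_0}^{-1/4}$ and hence $\norm{S_{\frac{1+\chi_0}{\tau}, \sigma}\pare{\partial} r_0}_{\LqHu} \leqslant C \tau^{1/4}\pare{1+\chi_0}^{-1/4}\norm{r_0}_{\dot{H}^1}$, as desired. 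There is no genuine obstacle here --- the statement is a direct corollary of the linear estimates --- but it is worth emphasising that it is the choice of the $\gamma$-branch of the minimum for $m_0$ and $r_0$ that produces the $\tau^{1/4}$ gain later used, in Section~\ref{sec:fixed_poin_application}, to absorb data $m_0, r_0$ that are arbitrarily large in $\dot{H}^1$.
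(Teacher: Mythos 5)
Your proposal is correct and follows exactly the paper's route: the paper's proof is the one-line application of Lemma~\ref{lem:parabolic1} for the first bound and Lemma~\ref{lem:parabolic2} (i.e.\ the estimate \eqref{eq:bound_w0_L4Hs} with $F\equiv 0$) for the second and third. Your additional remark that one must take the $\gamma^{-1/4}\norm{w_0}_{\dot{H}^{s+1/2}}$ branch of the minimum --- the only one available under the sole hypothesis $m_0, r_0\in\dot{H}^1$ --- is exactly the point the paper leaves implicit.
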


\begin{proof}
We apply respectively Lemma \ref{lem:parabolic1} to deduce the first inequality and Lemma \ref{lem:parabolic2} to deduce the second and the third inequality. 
\end{proof}

Next we  bound the bulk force:

\begin{prop}\label{prop:smallness_outer_force}
Let $ \cG_F\in \LqHu \cap L^2_T\dot{H}^{3} \cap \dot{W}^{1, 2}_T \dot{H}^1 $, and let $ \tau $ be
\begin{equation}\label{eq:hyp_smallness_tau1}
\tau < \frac{\pare{1+\chi_0}^{7/3}}{ C\ \chi_0^{4/3}} \pare{\norm{\cG_F}_{L^2_T\dot{H}^{3}}   + \norm{\cG_F}_{\dot{W}^{1, 2}_T \dot{H}^1}}^{-4/3} \ \varrho_0^{4/3},
\end{equation}
 then let us consider $ g $ defined as in \eqref{eq:def_outer_g}, the following bound is true
 \begin{equation*}
 \norm{g}_{\LqHu}\leqslant \varrho_0. 
 \end{equation*}

\end{prop}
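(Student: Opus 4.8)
The plan is to estimate the two non-zero components of $g=\pare{g_1,0,g_2}$ from \eqref{eq:def_outer_g} separately, treating each as a Duhamel term for one of the linear parabolic problems \eqref{eq:parabolic_linear} analysed in Section~\ref{sec:linear_parabolic_estimates}, and then to add the two bounds.

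For the velocity slot $g_1=\frac{\chi_0}{\pare{1+\chi_0}^2}\int_0^tS_{0,\nu}\pare{\partial,t-t'}\pare{\cG_F\cdot\nabla\cG_F}\pare{t'}\,\d t'$ I would view $g_1$ as the solution of \eqref{eq:parabolic_linear} with $\gamma=0$, $\mu=\nu$, vanishing initial datum and bulk force $\frac{\chi_0}{\pare{1+\chi_0}^2}\cG_F\cdot\nabla\cG_F$, and apply the $L^4_T\dot H^{s+\frac12}$ estimate of Lemma~\ref{lem:parabolic1} with $s=\frac12$. This reduces matters to bounding $\norm{\cG_F\cdot\nabla\cG_F}_{L^2_T\dot H^{-1/2}}$, which by the product rule of Lemma~\ref{lem:Sob_product_rules} applied with the pair $\dot H^1\pare{\bR^3}\times L^2\pare{\bR^3}\hra\dot H^{-1/2}\pare{\bR^3}$ is $\leqslant C\norm{\cG_F}_{\LqHu}^2\leqslant C\varrho_0^2$. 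Since $\frac{\chi_0}{\pare{1+\chi_0}^2}\leqslant\frac14$ for every $\chi_0>0$ and $\varrho_0\leqslant c^{3/4}/C\leqslant\nu^{3/4}/C$ by \eqref{eq:smallness_cG}, this gives $\norm{g_1}_{\LqHu}\leqslant\frac{C}{\nu^{3/4}}\varrho_0^2\leqslant\frac{\varrho_0}{2}$ once the absolute constant in \eqref{eq:smallness_cG} is chosen large enough.

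For the $r$-slot $g_2=-\frac{\chi_0}{1+\chi_0}\int_0^tS_{\frac{1+\chi_0}{\tau},\sigma}\pare{\partial,t-t'}\pare{\partial_t\cG_F-\sigma\Delta\cG_F}\pare{t'}\,\d t'$ the idea is to cash in the large damping coefficient $\gamma=\frac{1+\chi_0}{\tau}$. First I observe that the bulk force $f=-\frac{\chi_0}{1+\chi_0}\pare{\partial_t-\sigma\Delta}\cG_F$ lies in $L^2_T\dot H^1$: indeed $\partial_t\cG_F\in L^2_T\dot H^1$ and $\Delta\cG_F\in L^2_T\dot H^1$ because $\cG_F\in L^2_T\dot H^3\cap\dot W^{1,2}_T\dot H^1$, cf. \eqref{eq:regularity_GF}. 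Then Lemma~\ref{lem:smoothin_bulk_force} with $s=1$, $\gamma=\frac{1+\chi_0}{\tau}$, $\mu=\sigma$ and vanishing initial datum yields
\begin{equation*}
\norm{g_2}_{\LqHu}\leqslant\frac{C}{\gamma^{3/4}}\,\norm{f}_{L^2_T\dot H^1}\leqslant\frac{C\,\chi_0\,\tau^{3/4}}{\pare{1+\chi_0}^{7/4}}\left(\norm{\cG_F}_{\dot W^{1,2}_T\dot H^1}+\sigma\,\norm{\cG_F}_{L^2_T\dot H^3}\right).
\end{equation*}
Raising the smallness hypothesis \eqref{eq:hyp_smallness_tau1} to the power $3/4$ shows exactly that $\frac{\chi_0\,\tau^{3/4}}{\pare{1+\chi_0}^{7/4}}\pare{\norm{\cG_F}_{L^2_T\dot H^3}+\norm{\cG_F}_{\dot W^{1,2}_T\dot H^1}}\leqslant\varrho_0/C$, hence $\norm{g_2}_{\LqHu}\leqslant\varrho_0/2$ (the fixed harmless factor $\max\set{1,\sigma}$ is absorbed into the constant in \eqref{eq:hyp_smallness_tau1}). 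Adding the two estimates gives $\norm{g}_{\LqHu}\leqslant\varrho_0$.

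The main obstacle is the diffusion contribution $\sigma\Delta\cG_F$ sitting inside $f$. The only estimate at our disposal that turns the damping into a genuine decay factor, Lemma~\ref{lem:smoothin_bulk_force}, gains no spatial regularity, so one is forced to measure $\Delta\cG_F$ in $L^2_T\dot H^1$, i.e.\ to pay two extra derivatives on $\cG_F$; this is precisely why the hypotheses of Proposition~\ref{prop:existence_unique_solution} demand $\cG_F\in L^2_T\dot H^3$ rather than merely $\cG_F\in\LqHu$. What saves the argument is that these higher-order norms of $\cG_F$ — finite but possibly large — enter multiplied by $\gamma^{-3/4}=\pare{\tau/\pare{1+\chi_0}}^{3/4}$, which is made as small as needed by shrinking $\tau$; the explicit window \eqref{eq:hyp_smallness_tau1} is exactly the quantitative balance between this gain and the magnitude of $\norm{\cG_F}_{L^2_T\dot H^3}+\norm{\cG_F}_{\dot W^{1,2}_T\dot H^1}$.
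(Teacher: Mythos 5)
Your proposal is correct and follows essentially the same route as the paper: the same splitting $g=(g_1,0,g_2)$, Lemma~\ref{lem:parabolic1} plus the product rule of Lemma~\ref{lem:Sob_product_rules} for $g_1$, and Lemma~\ref{lem:smoothin_bulk_force} with $\gamma=\frac{1+\chi_0}{\tau}$ for $g_2$, with the same $\tau^{3/4}$ bookkeeping against \eqref{eq:hyp_smallness_tau1}. The only (harmless) cosmetic differences are that you absorb the prefactor via $\frac{\chi_0}{(1+\chi_0)^2}\leqslant\frac14$ where the paper folds it into the smallness requirement on $\varrho_0$, and that you track the factor $\sigma$ in front of $\norm{\cG_F}_{L^2_T\dot H^3}$ explicitly.
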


\begin{proof}
Let us define 
\begin{align*}
f_1 & = \frac{\chi_0}{\pare{1+\chi_0}^2} \cG_F\cdot\nabla \cG_{F}, \\
f_2 & = -\frac{\chi_0}{1+\chi_0}\pare{ \partial_t \cG_F - \sigma \Delta \cG_F \Big.}. 
\end{align*}
And let $ g_1, g_2 $ be defined as in \eqref{eq:def_outer_g}, indeed
\begin{align*}
g_1 & = S_{0, \nu}\pare{\partial}\star_t f_1, \\
g_2 & = S_{\frac{1+\chi_0}{\tau}, \sigma}\pare{\partial}\star_t f_2, \\
\end{align*}

Indeed $ g_1 $ is the unique solution of the following Cauchy problem
\begin{equation*}
\left\lbrace
\begin{aligned}
& \partial_t w -\nu\Delta w = f_1, \\
& \left. w\right|_{t=0}=0, 
\end{aligned}
\right. 
\end{equation*}
whence applying Lemma \ref{lem:parabolic1} we deduce
\begin{equation*}      
\norm{g_1}_{\LqHu}\leqslant \frac{C \ \chi_0}{\pare{1+\chi_0}^2\ \nu^{3/4}}\norm{\cG_F\cdot \nabla \cG_{F}}_{L^2_T\dot{H}^{-\frac{1}{2}}}. 
\end{equation*}
Lemma \ref{lem:Sob_product_rules} and the fact that $ \norm{\cG_F}_{\LqHu}\leqslant \varrho_0 $ imply that
\begin{equation}\label{eq:bound_g1}
\norm{g_1}_{\LqHu}\leqslant \frac{C\chi_0 \varrho_0^2}{\pare{1+\chi_0}^2\nu^{3/4}}. 
\end{equation}
In order to bound $ g_2 $ we apply Lemma \ref{lem:smoothin_bulk_force} with $ w_0=0 $ and $ F=f_2 $, obtaining the bound
\begin{equation*}
\norm{g_2}_{\LqHu} =  
 \norm{S_{\frac{1+\chi_0}{\tau}, \sigma}\pare{\partial} \star_t f_2}_{\LqHu}\leqslant \frac{C\chi_0 \tau^{3/4}}{\pare{1+\chi_0}^{7/4}} \pare{\norm{\cG_F}_{L^2_T\dot{H}^{3}}   + \norm{\cG_F}_{\dot{W}^{1, 2}_T \dot{H}^1}},
\end{equation*}
which with the bound \eqref{eq:hyp_smallness_tau1} implies that
\begin{equation}\label{eq:bound_g2}
\norm{g_2}_{\LqHu}\leqslant \frac{\varrho_0}{2}. 
\end{equation}
If $\displaystyle \varrho_0 \leqslant \frac{\pare{1+\chi_0}^2\nu^{3/4}}{2 C \chi_0} $ the bounds \eqref{eq:bound_g1} and \eqref{eq:bound_g2} imply that
\begin{equation*}
\norm{g}_{\LqHu}\leqslant {\varrho_0}. 
\end{equation*}
\end{proof}

We prove now the nonlinear bounds; in order to do so we need to explicit the time-convolution form of the nonlinearities defined in \eqref{eq:cT2NS}--\eqref{eq:cT3},  let us hence define
\begin{equation}\label{eq:cB2NS}
B_{ \ns} \pare{U, U} =
\pare{\begin{array}{c}
B_{ \ns, u} \pare{U, U} \\
B_{ \ns, m} \pare{U, U} \\
B_{ \ns, r} \pare{U, U} 
\end{array}}
=
 \pare{
\begin{array}{c}
\displaystyle - \cP \pare{\big.u\cdot\nabla u } - \cP\bra{ \big. \pare{m+r }\cdot \nabla {r }}
 			- \frac{1}{2}\cP \ \curl \bra{\big.\pare{m+r } \times {r }}\\[5mm]
 			\displaystyle -\cP \bra{  \big. u\cdot\nabla \pare{ m+r }} + \frac{1}{2}\cP \bra{ \big. \pare{\curl \ u} \times \pare{m+r }  }  \\[5mm]
 		\displaystyle  -\cQ \bra{u\cdot\nabla \pare{ m+r }} + \frac{1}{2}\cQ \bra{ \pare{\curl \ u} \times \pare{m+r    }}
\end{array}
},
\end{equation}

\begin{equation}
\cL_{1}\pare{U}=
\pare{
\begin{array}{c}
\cL_{1, u}\pare{U} \\
\cL_{1, m}\pare{U} \\
\cL_{1, r}\pare{U} 
\end{array}
}
 = \pare{
\begin{array}{c}
\displaystyle \frac{1}{2 \pare{ 1+\chi_0}} \left( \Big.  
\cP\bra{ \pare{m+\pare{ 1-\chi_0}r }\cdot \nabla {  \cG_F}}
 			+ \cP  \bra{\pare{m+\pare{1+\chi_0}r}\div\ \cG_F}  \right)\\[5mm]
 			\displaystyle-\frac{\chi_0}{1+\chi_0}\cP\bra{\big. u\cdot\nabla \cG_F} \\[5mm]
 			\displaystyle-\frac{\chi_0}{1+\chi_0}\cQ \bra{\big. u\cdot\nabla \cG_F} 
\end{array}
},
\end{equation}

\begin{equation}
\cL_{2}\pare{U}=
\pare{
\begin{array}{c}
\cL_{2, u}\pare{U} \\
\cL_{2, m}\pare{U} \\
\cL_{2, r}\pare{U} 
\end{array}
}
  = \pare{
\begin{array}{c}
\displaystyle \frac{1}{2 \pare{ 1+\chi_0}}  \pare{\Big. -\cG_{F} \div\pare{m+\pare{1+\chi_0}r} -\cG_{F}\cdot \nabla\pare{m+\pare{1+\chi_0}r}}\\[5mm]
 			\displaystyle \frac{\chi_0}{2\pare{ 1+\chi_0}}\Big. \cP\bra{\curl u \times \cG_F} \\[5mm]
 			\displaystyle\frac{\chi_0}{2\pare{ 1+\chi_0}}\Big. \cQ\bra{\curl u \times \cG_F}' 
\end{array} 
 }. 
\end{equation}

\begin{equation}
\cN_1\pare{U} 
= \pare{
\begin{array}{c}
\cN_{1, u}\pare{U}\\
\cN_{1, m}\pare{U}\\
\cN_{1, r}\pare{U}
\end{array}
}
= \pare{
\begin{array}{c}
0 \\[5mm]
\displaystyle \frac{\chi_0}{\pare{1+\chi_0}^2} \set{ \cP\bra{\Big.\cG_F \times\pare{\big. \pare{m+\pare{1+\chi_0}r}\times \cG_F}}} \\[5mm]
\displaystyle \frac{\chi_0}{\pare{1+\chi_0}^2} \set{ \cQ\bra{\Big.\cG_F \times\pare{\big. \pare{m+\pare{1+\chi_0}r}\times \cG_F}}}
\end{array}
},
\end{equation}

\begin{equation}
\cN_2\pare{U, U}=
\pare{
\begin{array}{c}
\cN_{2, u}\pare{U, U}\\
\cN_{2, m}\pare{U, U}\\
\cN_{2, r}\pare{U, U}
\end{array}
}
 = \pare{
\begin{array}{c}
0 \\[5mm]
\displaystyle  \frac{\chi_0}{1+\chi_0} \cP\bra{\Big.\cG_F \times\pare{\big. r\times m}} + \frac{\chi_0}{1+\chi_0} \cP \bra{\Big. \pare{m+r}\times\bra{\pare{m+\pare{1+\chi_0}r}\times\cG_F}}\\[5mm]
\displaystyle  \frac{\chi_0}{1+\chi_0} \cQ\bra{\Big.\cG_F \times\pare{\big. r\times m}} + \frac{\chi_0}{1+\chi_0} \cQ \bra{\Big. \pare{m+r}\times\bra{\pare{m+\pare{1+\chi_0}r}\times\cG_F}}
\end{array}
},
\end{equation}

\begin{equation}\label{eq:cN3}
\cN_3\pare{U, U, U}=
\pare{
\begin{array}{c}
\cN_{3, u}\pare{U, U, U} \\
\cN_{3, m}\pare{U, U, U} \\
\cN_{3, r}\pare{U, U, U} 
\end{array}
}
 = \pare{
\begin{array}{c}
0 \\[5mm]
\displaystyle  \cP\bra{\pare{m+r}\times\pare{r\times {m}}\Big. } \\[5mm]
\displaystyle  \cQ\bra{\pare{m+r}\times\pare{r\times {m}}\Big. }
\end{array}
}.
\end{equation}

With such notation we can rewrite the operators defined in \eqref{eq:cT2NS}--\eqref{eq:cT3} in a time-convolution form
\begin{equation}
\label{eq:nonlin_as_convolutions}
\begin{aligned}
\cT_{2, \ns}\bra{U}& =  
\pare{\begin{array}{c}
S_{0, \nu}\pare{\partial} \star_t  B_{ \ns, u} \pare{U, U} \\
S_{\frac{1}{\tau}, \sigma} \pare{\partial} \star_t B_{ \ns, m} \pare{U, U} \\
S_{\frac{1+\chi_0}{\tau}, \sigma} \pare{\partial} \star_t B_{ \ns, r} \pare{U, U} 
\end{array}},&
\cT_{1, \RN{1}} \bra{U} & = \pare{
\begin{array}{c}
S_{0, \nu}\pare{\partial} \star_t\cL_{1, u}\pare{U} \\
S_{\frac{1}{\tau}, \sigma} \pare{\partial} \star_t \cL_{1, m}\pare{U} \\
S_{\frac{1+\chi_0}{\tau}, \sigma} \pare{\partial} \star_t \cL_{1, r}\pare{U} 
\end{array}
}, \\
\cT_{1, \RN{2}} \bra{U} & = \pare{
\begin{array}{c}
S_{0, \nu}\pare{\partial} \star_t\cL_{2, u}\pare{U} \\
S_{\frac{1}{\tau}, \sigma} \pare{\partial} \star_t \cL_{2, m}\pare{U} \\
S_{\frac{1+\chi_0}{\tau}, \sigma} \pare{\partial} \star_t \cL_{2, r}\pare{U} 
\end{array}
}, &
\cT_{1} \bra{U} & = 
\pare{
\begin{array}{c}
S_{0, \nu}\pare{\partial} \star_t \cN_{1, u}\pare{U}\\
S_{\frac{1}{\tau}, \sigma} \pare{\partial} \star_t \cN_{1, m}\pare{U}\\
S_{\frac{1+\chi_0}{\tau}, \sigma} \pare{\partial} \star_t  \cN_{1, r}\pare{U}
\end{array}
},\\
\cT_{2}\bra{U} & = 
\pare{
\begin{array}{c}
S_{0, \nu}\pare{\partial} \star_t \cN_{2, u}\pare{U, U}\\
S_{\frac{1}{\tau}, \sigma} \pare{\partial} \star_t \cN_{2, m}\pare{U, U}\\
S_{\frac{1+\chi_0}{\tau}, \sigma} \pare{\partial} \star_t \cN_{2, r}\pare{U, U}
\end{array}
}, &
\cT_{3}\bra{U} & = 
\pare{
\begin{array}{c}
S_{0, \nu}\pare{\partial} \star_t \cN_{3, u}\pare{U, U, U} \\
S_{\frac{1}{\tau}, \sigma} \pare{\partial} \star_t \cN_{3, m}\pare{U, U, U} \\
S_{\frac{1+\chi_0}{\tau}, \sigma} \pare{\partial} \star_t \cN_{3, r}\pare{U, U, U} 
\end{array}
} .
\end{aligned}
\end{equation}

It is hence not a coincidence that the nonlinearities in \eqref{eq:cB2NS}--\eqref{eq:cN3} have the same notation as the nonlinearities on which we provide the bounds in Section \ref{sec:par_est_for_S}, setting in fact $ G=\cG $ and $ U=\pare{u, m, r} $ we can express the nonlinearity $ \cT\bra{U} $ of \eqref{eq:Shilomis_mild2} in the form \eqref{eq:nonlin_as_convolutions} we can use the results of Section \ref{sec:par_est_for_S} in order to prove th following result:

\begin{prop}\label{prop:nonlinear_bounds_Shilomis}
Let $ c = \min\set{\big. \nu, \sigma} $, and let $ \norm{\cG_{F}}_{\LqHu}\leqslant \varrho_0 $, then the following bounds hold true
\begin{enumerate}
\item $ \displaystyle \norm{\cT_{1, j }\bra{U}}_{\LqHu}\leqslant \frac{C}{c^{3/4}}\ \varrho_0 \norm{U}_{\LqHu} $ for $ j=\RN{1}, \RN{2} $, 

\item $ \displaystyle \norm{\cT_{2, \ns}\bra{U}}_{\LqHu}\leqslant\frac{C}{c^{3/4}}  \norm{U}_{\LqHu}^2 $, 

\item $ \displaystyle \norm{\cT_{p}\bra{U}}_{\LqHu}\leqslant\frac{C}{c^{1/2}}\ \varrho_0^{3-p}  \norm{U}_{\LqHu}^p $ for $ p=1,2,3 $. 

\end{enumerate}
\end{prop}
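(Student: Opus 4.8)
The plan is to observe that each of the operators $\cT_{2,\ns}$, $\cT_{1,\RN{1}}$, $\cT_{1,\RN{2}}$, $\cT_1$, $\cT_2$, $\cT_3$ defined in \eqref{eq:nonlin_as_convolutions} is, component by component, a time-convolution $S_{\gamma,\mu}(\partial)\star_t(\,\cdot\,)$ against a nonlinearity belonging to one of the three classes $B_\ns$, $\cL_j$, $\cN_p$ of Remark \ref{rem:explanation_nonlinear_terms}, with $G=\cG_F$. Consequently Proposition \ref{prop:nonlinear_bounds_generic} applies verbatim to each component, and the only bookkeeping is to read off the relevant parameters $(\gamma,\mu)$ and to track the scalar prefactors involving $\chi_0$ and $(1+\chi_0)$. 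For the velocity component the propagator is $S_{0,\nu}$, so $\mu=\nu$; for the $m$ component it is $S_{1/\tau,\sigma}$, so $\mu=\sigma$; for the $r$ component it is $S_{(1+\chi_0)/\tau,\sigma}$, so again $\mu=\sigma$. In every case $\mu\in\set{\nu,\sigma}$, hence $\mu^{-3/4}\leqslant c^{-3/4}$ and $\mu^{-1/2}\leqslant c^{-1/2}$ with $c=\min\set{\nu,\sigma}$, which is exactly what produces the $c$-dependence in the claimed bounds.

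\medskip\noindent\textbf{Step 1: the quadratic Navier--Stokes-type term.} First I would treat $\cT_{2,\ns}\bra{U}$. Its three components are $S_{0,\nu}\star_t B_{\ns,u}(U,U)$, $S_{1/\tau,\sigma}\star_t B_{\ns,m}(U,U)$ and $S_{(1+\chi_0)/\tau,\sigma}\star_t B_{\ns,r}(U,U)$; inspecting \eqref{eq:cB2NS}, each of $B_{\ns,u},B_{\ns,m},B_{\ns,r}$ is a finite sum of terms of the form $v_i\,q^{\ns,\ell}_{i,j}(\partial)v_j$ with $v$ a component of $U=(u,m,r)$ (note $\cP\div$, $\cP\curl$ and $\cQ\div$, $\cQ\curl$ are order-one homogeneous Fourier multipliers). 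Applying point \ref{enum:nonlinear_bounds_generic1} of Proposition \ref{prop:nonlinear_bounds_generic} to each component with $\mu\in\set{\nu,\sigma}$ and summing gives $\norm{\cT_{2,\ns}\bra{U}}_{\LqHu}\leqslant \frac{C}{c^{3/4}}\norm{U}_{\LqHu}^2$.

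\medskip\noindent\textbf{Step 2: the linear-in-$U$ terms $\cT_{1,\RN{1}}$ and $\cT_{1,\RN{2}}$.} Next, for $j=\RN{1},\RN{2}$ the components of $\cT_{1,j}\bra{U}$ are convolutions against $\cL_{j,u}(U)$, $\cL_{j,m}(U)$, $\cL_{j,r}(U)$, each of which (see the displays defining $\cL_1$, $\cL_2$) is a finite sum, up to a scalar factor of the form $\frac{1}{2(1+\chi_0)}$, $\frac{\chi_0}{1+\chi_0}$ or $\frac{\chi_0}{2(1+\chi_0)}$, of terms $G_i\,q^{(k),\ell}_{i,j}(\partial)v_j$ or $v_i\,q^{(k),\ell}_{i,j}(\partial)G_j$ with $G=\cG_F$ and $v$ a component of $U$. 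Since for $\chi_0>0$ all these scalar factors are bounded by $1$, point \ref{enum:nonlinear_bounds_generic2} of Proposition \ref{prop:nonlinear_bounds_generic} applied componentwise with $\mu\in\set{\nu,\sigma}$ and the hypothesis $\norm{\cG_F}_{\LqHu}\leqslant\varrho_0$ yields $\norm{\cT_{1,j}\bra{U}}_{\LqHu}\leqslant\frac{C}{c^{3/4}}\varrho_0\norm{U}_{\LqHu}$.

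\medskip\noindent\textbf{Step 3: the $p$-linear terms $\cT_p$.} Finally, for $p=1,2,3$ the nonzero components of $\cT_p\bra{U}$ are convolutions against $\cN_{p,m}$, $\cN_{p,r}$ (the $u$-component vanishes), and by the displays \eqref{eq:cN3} and its predecessors each such term is, up to a scalar bounded for $\chi_0>0$ by a constant times $\chi_0^{3-p}(1+\chi_0)^{-(3-p)}\cdot(1+\chi_0)^{?}$— more simply, up to a bounded scalar — a sum of expressions of the form $v^{\otimes p}\otimes G^{\otimes(3-p)}$ with $G=\cG_F$ and each $v$ a component of $U$. Here a small care is needed: the factors $(m+(1+\chi_0)r)$ appearing inside produce a factor $(1+\chi_0)$ which cancels against the prefactor $(1+\chi_0)^{-1}$ (or $(1+\chi_0)^{-2}$), so no $\chi_0$-dependence survives in the final constant. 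Point \ref{enum:nonlinear_bounds_generic3} of Proposition \ref{prop:nonlinear_bounds_generic} applied componentwise with $\mu\in\set{\nu,\sigma}$ then gives $\norm{\cT_p\bra{U}}_{\LqHu}\leqslant\frac{C}{c^{1/2}}\varrho_0^{3-p}\norm{U}_{\LqHu}^p$, which completes the proof. The only mild subtlety — and the place I would be most careful — is verifying that after distributing the brackets in \eqref{eq:cT1}--\eqref{eq:cT3} every surviving scalar coefficient is genuinely bounded uniformly for $\chi_0>0$ (equivalently, absorbed into $C$), so that the $p$-linear bounds carry no hidden growth in $\chi_0$.
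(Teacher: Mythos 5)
Your proposal is correct and follows essentially the same route as the paper: the paper likewise reads each operator off its time-convolution form \eqref{eq:nonlin_as_convolutions} and applies the corresponding point of Proposition \ref{prop:nonlinear_bounds_generic} componentwise, with $\mu\in\set{\nu,\sigma}$ giving the dependence on $c$. Your extra bookkeeping of the $\chi_0$-prefactors is a reasonable precaution that the paper simply absorbs into $C$ without comment, but it does not alter the argument.
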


\begin{proof}
thanks to the results stated and proved in Section \ref{sec:par_est_for_S} the proof of Proposition \ref{prop:nonlinear_bounds_Shilomis} is now immediate. 
\begin{enumerate}

\item We know that $ \cT_{1, j} $ can be written in convolution form as it is done in \eqref{eq:nonlin_as_convolutions}, whence we use the estimates proved in Proposition \ref{prop:nonlinear_bounds_generic}, \ref{enum:nonlinear_bounds_generic2} to deduce the bound
\begin{equation*}
\norm{\cT_{1, j }\bra{U}}_{\LqHu}\leqslant \frac{C}{c^{3/4}} \norm{\cG_{F}}_{\LqHu}\norm{U}_{\LqHu}, 
\end{equation*}
but since $ \norm{\cG_{F}}_{\LqHu}\leqslant \varrho_0 $ we deduce the first bound. 

\item Similarly as before we exploit the convolution formulation of $ \cT_{2, \ns} $ given in \eqref{eq:nonlin_as_convolutions} and we use the bound proved in Proposition \ref{prop:nonlinear_bounds_generic}, \ref{enum:nonlinear_bounds_generic1} to deduce
\begin{equation*}
\norm{\cT_{2, \ns}\bra{U}}_{\LqHu}\leqslant\frac{C}{c^{3/4}}  \norm{U}_{\LqHu}^2. 
\end{equation*}

\item As in the first two steps, but using the bound proved in Proposition \ref{prop:nonlinear_bounds_generic}, \ref{enum:nonlinear_bounds_generic3}, we deduce that
\begin{equation*}
\norm{\cT_{p}\bra{U}}_{\LqHu}\leqslant\frac{C}{c^{1/2}}\ \norm{\cG}_{\LqHu}^{3-p}  \norm{U}_{\LqHu}^p, \hspace{5mm}\text{for }p=1,2,3, 
\end{equation*}
but again since $ \norm{\cG_{F}}_{\LqHu}\leqslant \varrho_0 $ we prove the last bound. 

\end{enumerate}
\end{proof}

\subsection{The fixed point theorem } \label{sec:fixed_poin_application}

We can at this point apply Proposition \ref{prop:fixed_point} to the system \ref{eq:Shilomis_mild2}. Let us define

\begin{equation*}
y  = \cS\pare{\partial} U_0 + g, 
\end{equation*}
where $ \cS\pare{\partial} U_0 $ is defined in \eqref{eq:cSU0} and $ g $ is defined in \eqref{eq:def_outer_g}. Next let us define
\begin{equation*}
T_1\pare{U} = \cT_{1, \RN{1}}\bra{U} + \cT_{1, \RN{2}}\bra{U} + \cT_{1}\bra{U}, 
\end{equation*}
where $ \cT_{1, \RN{1}}, \cT_{1, \RN{2}} $ and $ \cT_{1} $ are respectively defined in \eqref{eq:cT1I}, \eqref{eq:cT1II} and \eqref{eq:cT1}. Next
\begin{equation*}
T_2\pare{U, U} = \cT_{2, \ns}\bra{U} + \cT_2\bra{U}, 
\end{equation*}
where $ \cT_{2, \ns}, \cT_2 $ are defined in \eqref{eq:cT2NS} and \eqref{eq:cT2}. Finally we define
\begin{equation*}
T_3\pare{U, U, U} = \cT_3\bra{U}. 
\end{equation*}

In order to apply Proposition \ref{prop:fixed_point} we have to check the following three conditions
\begin{enumerate}[i]

\item The element $ y=\cS\pare{\partial} U_0 + g $ belongs to the ball $ B_{\LqHu}\pare{0, \rho } $ for $ \rho $ small, 

\item Each $ p $--linear operator $ T_p, \ p=1, 2, 3 $ maps continuously $ \pare{\LqHu}^p $ to $ \LqHu $, 

\item The norm of $ T_1 $ as a linear operator from $ \LqHu $ to itself is strictly smaller than $ 1/4 $. 

\end{enumerate}

We prove hence these conditions here below;

\begin{enumerate}[i]

\item A standard triangular inequality tells us that
\begin{equation*}
\norm{y}_{\LqHu} \leqslant \norm{\cS\pare{\partial} U_0}_{\LqHu} + \norm{g}_{\LqHu}, 
\end{equation*}
whence, thanks to the results proved in Proposition \ref{prop:smallness_initial_data} we can argue that if 
\begin{align*}
\norm{u_0}_{\Hud} \leqslant \frac{\nu^{1/4}}{6C} \ \rho, &&
\tau < \frac{1+\chi_0}{6C^4\pare{\norm{m_0}_{\dot{H}^1}^4+\norm{r_0}_{\dot{H}^1}^4}}\ \rho^4, 
\end{align*}
then 
\begin{equation*}
 \norm{\cS\pare{\partial} U_0}_{\LqHu} \leqslant \frac{\rho}{2}. 
\end{equation*}
While if $ \varrho_0 < \frac{\rho}{2} $ Proposition \ref{prop:smallness_outer_force} assures us that $ \norm{g}_{\LqHu}<\rho/2 $, proving the first claim. 

\item Proposition \ref{prop:nonlinear_bounds_Shilomis} assures us that each $ p $--linear operator $ T_p, \ p=1, 2, 3 $ maps continuously $ \pare{\LqHu}^p $ to $ \LqHu $. 

\item We use again the result in Proposition \ref{prop:nonlinear_bounds_Shilomis} to deduce that
\begin{equation*}
\norm{T_1}\leqslant \frac{2C\pare{1+\varrho_0}}{\min \set{c^{1/2},\big.  c^{3/4}}} \ \varrho_0, 
\end{equation*}
whence we deduce that if 
\begin{equation*}
\varrho_0 <\frac{\min\set{c^{1/2},\big.  c^{3/4}}}{8C}, 
\end{equation*}
then $ \norm{T_1}<1/4 $. 

\end{enumerate}

We can hence apply Proposition \ref{prop:fixed_point} to deduce the existence a unique solution to the equation in mild form \eqref{eq:Shilomis_mild2}, which in turn implies the existence of a unique solution $ \pare{u, m, r}\in \LqHu $ to \eqref{eq:Shilomis2}. \\

The continuity w.r.t. the $ \Hud $ topology, i.e. that $ U\in \cC_T\Hud $, follows from standard considerations which are analogous  to the incompressible \NS case, see \cite{LR2}. 
\hfill $ \Box $

\section{Convergence as $ \tau\to 0 $}\label{sec:conv_tau}

In the previous section we proved that it is possible to construct solutions of \eqref{eq:Shilomis2} in a critical functional space \textit{independently} of the parameter $ \tau $, when $ \tau $ is sufficiently small. In the present section we let $ \tau\to 0 $ and we deduce the limit system solved by $ \pare{u^\tau, m^\tau, r^\tau} $ in the limit $ \tau\to 0 $. Just for this section, since we are interested to compute the asymptotic as $ \tau\to 0 $, we explicit the dependence of the unknown on the parameter $ \tau $.  The result we prove is the following one. 

\begin{prop}\label{prop:convergence}
Let $ \pare{u_0, m_0, r_0}, \cG_F $ and $ \tau $ be as in the statement \ref{en:point_we_prove_in_the_ecistence_thm} of Proposition \ref{prop:existence_unique_solution}, and let us moreover assume that $ \nabla \cG_F \in L^2_T\Hud$. Then for any $ \varepsilon > 0 $
\begin{equation}\label{eq:conv_mr_to_0}
\norm{\pare{m^\tau, r^\tau}}_{L^\infty\pare{\pare{\varepsilon, T};\Hud}}\xrightarrow{\tau\to 0}0.  
\end{equation}
Moreover for each $ t\in\bra{0, T} $ the following energy bound holds true
\begin{equation}\label{eq:energy_Hud_est_mr}
\frac{1}{2}\norm{\pare{m^\tau\pare{t}, r^\tau \pare{t}}}_{\Hud}^2 + \frac{1}{\tau}\int_0^t \norm{\pare{m^\tau\pare{t'}, r^\tau \pare{t'}}}_{\Hud}^2 \d t' + \sigma \int_0^t \norm{\pare{\nabla m^\tau\pare{t'}, \nabla r^\tau \pare{t'}}}_{\Hud}^2 \d t'\leqslant \frac{C}{\sigma}\ \rho^4, 
\end{equation}
where $ \rho $ is the radius of the ball in which the solutions constructed in Proposition \ref{prop:existence_unique_solution} live. \\
Moreover $ u^\tau\xrightarrow{\tau\to 0}\bu $ in $ L^\infty\pare{\pare{\varepsilon, T};\Hud}$ and $ \nabla u^\tau\xrightarrow{\tau\to 0} \nabla \bu $ in $  L^2\pare{\pare{\varepsilon, T};\Hud} $, where $ \bu $ is the solution of the following incompressible \NS equations 
\begin{equation}\label{eq:limit_system}
\left\lbrace
\begin{aligned}
& \partial_t\bu + \bu\cdot\nabla\bu -\nu \Delta\bu +\nabla\bar{p} = \frac{\chi_0}{\pare{1+\chi_0}^2}\ \cG_F\cdot\nabla\cG_F, \\
& \div\ \bu=0, \\
&\left.\bu\right|_{t=0} = u_0.  
\end{aligned}
\right.
\end{equation}
\end{prop}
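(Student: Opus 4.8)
The plan is to prove Proposition~\ref{prop:convergence} in three stages: first the energy bound \eqref{eq:energy_Hud_est_mr} and a global $ L^2_T\Hud $--decay for $ \pare{m^\tau, r^\tau} $, then the stronger $ L^\infty\pare{\pare{\varepsilon, T};\Hud} $--decay via Lemma~\ref{lem:linear_damping_estimate}, and finally the convergence $ u^\tau\to\bu $.

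\emph{Step 1 (energy bound).} I would perform an $ \Hud $ estimate on the second and third equations of \eqref{eq:Shilomis2}. Testing against $ m^\tau $, resp. $ r^\tau $, and using $ \av{\ps{\cN}{w}_{\Hud}}\leqslant\norm{\cN}_{\Hmud}\norm{w}_{\dot{H}^{3/2}}\leqslant\tfrac{\sigma}{4}\norm{\nabla w}_{\Hud}^2+\tfrac{C}{\sigma}\norm{\cN}_{\Hmud}^2 $, the damping terms yield the $ \tfrac1\tau $ dissipation on the left--hand side. The right--hand side nonlinearities are all of the six types of Remark~\ref{rem:explanation_nonlinear_terms}, and their $ L^2_T\Hmud $ norms are controlled through Lemma~\ref{lem:Sob_product_rules} and H\"older by the uniform bound $ \norm{U^\tau}_{\LqHu}\leqslant 4\rho $ from Proposition~\ref{prop:existence_unique_solution} together with $ \norm{\cG_F}_{\LqHu}\leqslant\varrho_0 $; here one uses that the assumed regularity of $ \cG_F $ (hence of $ F $) forces $ \cG_F\in L^\infty_T W^{1,\infty} $, so that multiplication by $ \cG_F $ is bounded on $ \Hud $ uniformly in $ t $. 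Integrating in time and using $ m_0,r_0\in\Hud $ produces \eqref{eq:energy_Hud_est_mr}. In particular $ \tfrac1\tau\norm{\pare{m^\tau,r^\tau}}_{L^2_T\Hud}^2\leqslant\tfrac C\sigma\rho^4 $, so $ \pare{m^\tau,r^\tau}\to 0 $ in $ L^2_T\Hud $, and interpolating with the uniform bound for $ \pare{m^\tau,r^\tau} $ in $ L^2_T\dot{H}^{3/2} $ contained in \eqref{eq:energy_Hud_est_mr} also gives $ \pare{m^\tau,r^\tau}\to 0 $ in $ L^2_T\Hu $.

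\emph{Step 2 ($ L^\infty $ decay away from $ t=0 $).} Each of the $ m^\tau $-- and $ r^\tau $--equations has the form \eqref{eq:parabolic_linear} with $ \gamma\in\set{\tfrac1\tau,\tfrac{1+\chi_0}\tau}\to\infty $, $ \mu=\sigma $, and bulk force lying in $ L^2_T\Hmud+L^{4/3}_TL^2 $ with a norm bounded uniformly in $ \tau $ (by Section~\ref{sec:par_est_for_S} and the uniform $ \LqHu $ bound). Lemma~\ref{lem:linear_damping_estimate}, and in particular \eqref{eq:linear_damping_limit}, then gives \eqref{eq:conv_mr_to_0}. Since $ M=m^\tau+r^\tau+\tfrac{\chi_0}{1+\chi_0}\cG_F $ and $ H=-r^\tau+\tfrac1{1+\chi_0}\cG_F $ by \eqref{eq:change_unknown} and $ \cG_F=\nabla\Delta^{-1}F $, this is exactly \eqref{eq:convergence_MH}.

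\emph{Step 3 (convergence of the velocity).} The only term of $ \cN_u $ surviving the limit $ \tau\to 0 $ is $ \tfrac{\chi_0}{\pare{1+\chi_0}^2}\cP\pare{\cG_F\cdot\nabla\cG_F} $: the curl contribution $ \cP\curl\pare{\cG_F\times\cG_F} $ vanishes identically because $ \cG_F=\nabla\Delta^{-1}F $ is a gradient, and $ \cG_F\cdot\nabla\cG_F=\tfrac12\nabla\av{\nabla\Delta^{-1}F}^2 $ is again a gradient, which is exactly why it is absorbed into the pressure of \eqref{eq:limit_system} (equivalently \eqref{eq:limit_system_thm}). Writing $ w^\tau=u^\tau-\bu $ and subtracting the equation for $ \bu $, $ w^\tau $ solves a perturbed Navier--Stokes system $ \partial_t w^\tau-\nu\Delta w^\tau+\nabla\pi^\tau=-\cP\pare{w^\tau\cdot\nabla u^\tau+\bu\cdot\nabla w^\tau}+R^\tau $ with $ w^\tau|_{t=0}=0 $, where $ R^\tau $ collects all terms of $ \cN_u $ carrying a factor $ m^\tau $ or $ r^\tau $. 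I would then run an $ \Hud $ energy estimate on $ \pare{\varepsilon,T} $: the Navier--Stokes commutator terms are absorbed using the smallness of $ u^\tau $ and $ \bu $ in $ \LqHu $ (the first uniform in $ \tau $ by Proposition~\ref{prop:existence_unique_solution}, the second by \eqref{eq:smallness_vel_flow}); the part of $ R^\tau $ linear in $ \pare{m^\tau,r^\tau} $ tends to $ 0 $ in $ L^2_T\Hmud $ by $ \cG_F\in L^\infty_T W^{1,\infty} $ and the $ L^2_T\Hud $--decay of Step~1; the part quadratic and cubic in $ \pare{m^\tau,r^\tau} $ tends to $ 0 $ in $ L^2\pare{\pare{\varepsilon,T};\Hmud} $ by combining the $ L^\infty\pare{\pare{\varepsilon,T};\Hud} $--decay of Step~2 with the uniform $ L^2_T\dot{H}^{3/2} $ bound of Step~1; and $ w^\tau\pare{\varepsilon}\to 0 $ in $ \Hud $ follows from the same estimate run on $ \pare{0,\varepsilon} $, now invoking the \emph{global} $ L^2_T\Hu $--decay of $ \pare{m^\tau,r^\tau} $. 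This gives $ w^\tau\to 0 $ in $ L^\infty\pare{\pare{\varepsilon,T};\Hud} $ and $ \nabla w^\tau\to 0 $ in $ L^2\pare{\pare{\varepsilon,T};\Hud} $, which is \eqref{eq:convergence_space}.

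\emph{Main obstacle.} The delicate point is Step~3, namely making the forcing $ R^\tau $ genuinely small in a norm compatible with the critical energy estimate for $ w^\tau $: the quadratic terms in $ \pare{m^\tau,r^\tau} $ are only bounded, not small, in $ L^2_T\Hmud $, and become small only once the $ L^\infty $-in-time decay of Step~2 is used, which is valid only away from $ t=0 $. It is precisely here that the strong regularity of $ \cG_F $ and the restriction to times $ t>\varepsilon $ are indispensable, and it is the analytic reason why the convergence cannot be uniform up to $ t=0 $.
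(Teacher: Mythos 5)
Your overall architecture coincides with the paper's: the $\Hud$ energy estimate on the $m^\tau,r^\tau$ equations gives \eqref{eq:energy_Hud_est_mr}; Lemma \ref{lem:linear_damping_estimate}, applied with $\gamma=\tau^{-1}$ (resp.\ $\pare{1+\chi_0}\tau^{-1}$), $\mu=\sigma$ and the forcing split into an $L^2_T\Hmud$ part and an $L^{4/3}_TL^2$ part, gives \eqref{eq:conv_mr_to_0}; and the velocity convergence follows from an $\Hud$ energy estimate for $\delta u^\tau=u^\tau-\bu$ on $\pare{\varepsilon,T}$, the remainder $G^\tau$ being shown to vanish in $L^2\pare{\pare{\varepsilon,T};\Hmud}$ exactly as you indicate: quadratic terms via $\norm{R^\tau}_{L^\infty\pare{\pare{\varepsilon,T};\Hud}}\norm{\nabla R^\tau}_{L^2\pare{\pare{\varepsilon,T};\Hud}}$, linear terms via the extra hypothesis $\nabla\cG_F\in L^2_T\Hud$ (which the paper uses in place of your $\cG_F\in L^\infty_TW^{1,\infty}$ multiplier claim; the latter would itself need justification, whereas the stated hypothesis feeds directly into the Sobolev product rules of Lemma \ref{lem:Sob_product_rules}).

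The genuine gap is your treatment of the datum at time $t=\varepsilon$. You assert that $w^\tau\pare{\varepsilon}\to0$ in $\Hud$ follows from "the same estimate run on $\pare{0,\varepsilon}$, invoking the global $L^2_T\Hu$--decay". This fails for the terms of the forcing that are quadratic in $\pare{m^\tau,r^\tau}$. On $\pare{0,\varepsilon}$ the only information available is: $\norm{\pare{m^\tau,r^\tau}}_{L^\infty_T\Hud}$ and $\norm{\pare{m^\tau,r^\tau}}_{L^2_T\dot{H}^{3/2}}$ uniformly bounded, and $\norm{\pare{m^\tau,r^\tau}}_{L^2_T\Hud}=\cO\pare{\tau^{1/2}}$. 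Every admissible product-rule/H\"older splitting of $\norm{R^\tau\otimes\nabla R^\tau}_{L^2_T\Hmud}$ requires total time-integrability $L^2_T$ and total spatial regularity $2$ distributed over the two factors; an index count shows these constraints force the interpolation weights on the small endpoint $L^2_T\Hud$ (regularity $1/2$, integrability $2$) to vanish, so one only ever lands on products of the two \emph{bounded} quantities (e.g.\ $\norm{R^\tau}_{L^\infty_T\Hud}\norm{R^\tau}_{L^2_T\dot{H}^{3/2}}$ or $\norm{R^\tau}^2_{L^4_T\dot{H}^1}$). Hence the energy estimate on $\pare{0,\varepsilon}$ yields only $\norm{w^\tau\pare{\varepsilon}}_{\Hud}=\cO\pare{1}$, and the smallness you would need at $t=\varepsilon$ is precisely the convergence being proved --- the step is circular. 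The paper avoids this by never asking for smallness in $\tau$ at $t=\varepsilon$: it takes $u^\tau\pare{\varepsilon}-\bu\pare{\varepsilon}$ as the datum, bounds its $\Hud$ norm by a quantity $\eta_\varepsilon$ independent of $\tau$ with $\eta_\varepsilon\to0$ as $\varepsilon\to0$ (both flows are continuous at $t=0$ with the common value $u_0$), closes the Gronwall estimate to get $\limsup_{\tau\to0}\norm{\delta u^\tau}^2_{L^\infty\pare{\pare{\varepsilon,T};\Hud}}\leqslant C\eta_\varepsilon$, and then exploits the monotonicity of $\varepsilon\mapsto\norm{\cdot}_{L^\infty\pare{\pare{\varepsilon,T};\Hud}}$ to let $\varepsilon\to0$ \emph{after} $\tau\to0$. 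Substituting this boundary step for yours repairs the argument; the rest of your proof then goes through.
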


\begin{rem}
\begin{itemize}

\item We want to point out that the systems \eqref{eq:limit_system_thm} and \eqref{eq:limit_system} are equivalent. Indeed since $ \cG_F = \nabla \Delta^{-1} F $ it is not difficult to deduce that
\begin{equation*}
\cG_F \cdot \nabla \cG_F = \frac{1}{2} \ \nabla \av{\nabla\Delta^{-1} F}^2. 
\end{equation*}

\item Thanks to the result proved in Proposition \ref{prop:existence_unique_solution} it is not surprising, performing an energy estimate, to deduce that\footnote{See the energy estimate \eqref{eq:energy_Hud_est_mr} and its proof for a complete argument. }
\begin{equation*}
\norm{\pare{r^\tau, m^\tau}}_{L^2_T\Hud} = \cO\pare{\tau}, \text{ as } \tau\to 0.
\end{equation*}
Unfortunately such convergence is not strong enough in order to deduce that $ u^\tau $ converges toward $ \bu $ solution of \eqref{eq:limit_system} in the critical topology $  L^\infty\pare{\pare{\varepsilon, T};\Hud}\cap  L^2\pare{\pare{\varepsilon, T};\dot{H}^{\frac{3}{2}}}  $ (it though sufficient in order to deduce that there is convergence in some weak sense). We must therefore prove that $ m^\tau, r^\tau $ converge to zero in a stronger topology in order to prove convergence in critical norms, for this reason we have to prove the particular convergence stated in  \eqref{eq:conv_mr_to_0}.

\end{itemize}
 \fine
\end{rem}

\begin{proof}
We will divide the proof of Proposition \ref{prop:convergence} in several steps
\begin{enumerate}[\bf Step 1 :]

\item Proof of \eqref{eq:conv_mr_to_0}. \\
We prove the result for $ m^\tau $ only being the procedure for $ r^\tau $ identical. 
Let us rewrite the evolution equation of $ m^\tau $, given in \eqref{eq:Shilomis2}, as 
\begin{equation}\label{eq:simplified_equation_m}
\partial_t m^\tau + \frac{1}{\tau}\ m^\tau - \sigma\Delta m^\tau = F_1^\tau + F_2^\tau, 
\end{equation}
where
\begin{equation*}
\begin{aligned}
F_1^\tau & = -\cP \bra{u^\tau\cdot\nabla \pare{ m^\tau+r^\tau + \frac{\chi_0}{1+\chi_0} \cG_F }} + \frac{1}{2}\cP \bra{ \pare{\curl \ u^\tau} \times \pare{m^\tau+r^\tau + \frac{\chi_0}{1+\chi_0} \cG_F}  } \\
		F_2^\tau & =	- \cP \set{ \pare{m^\tau+r^\tau + \frac{\chi_0}{1+\chi_0} \cG_F} \times \bra{ \pare{m^\tau+r^\tau + \frac{\chi_0}{1+\chi_0} \cG_F} \times \pare{-r^\tau + \frac{1}{1+\chi_0} \cG_F}} }.
\end{aligned}
\end{equation*}
Hence thanks to the result proved in Proposition \ref{prop:existence_unique_solution} we know that there exists a $ \tau_0 = \tau_0\pare{u_0,m_0,r_0} $ and a $ T\in \left(0, \infty\right] $ so that $ \pare{u^\tau, m^\tau, r^\tau}\in \LqHu $ uniformly for $ \tau\in\bra{0, \tau_0} $. Hence since by hypothesis $ \cG_F\in\LqHu $ we deduce that
\begin{align*}
F_1^\tau \in L^2_T \dot{H}^{-1/2}, && F_2^\tau\in L^{4/3}_T L^2, 
\end{align*}
uniformly for $ \tau\in\bra{0, \tau_0} $.\\
We can hence apply the estimate \eqref{eq:linear_damping_estimate} of Lemma \ref{lem:linear_damping_estimate} setting $ \gamma=\tau^{-1}, \mu=\sigma $ and $ w=m^\tau $ we deduce 
\begin{equation*}
\norm{m^\tau\pare{t}}_{ \Hud }\leqslant C \pare{
e^{-\frac{t}{\tau}} \norm{m_0}_{\Hud} + \tau^{1/8}\bra{ \norm{F_1^\tau}_{L^2_T \dot{H}^{-1/2}} + \norm{F_2^\tau}_{L^{4/3}_T L^2} } + \frac{1}{\sigma^{1/4}} o_{\frac{1}{\tau}}\pare{1}
},
\end{equation*}
which indeed proves the statement \eqref{eq:conv_mr_to_0} for $ m^\tau $. With the very same procedure we can prove the bound
\begin{equation*}
\norm{r^\tau\pare{t}}_{ \Hud }\leqslant C \pare{
e^{-\frac{1+\chi_0}{\tau} t} \norm{m_0}_{\Hud} + \frac{\tau^{1/8}}{\pare{1+\chi_0}^{1/8}}\bra{ \norm{H_1^\tau}_{L^2_T \dot{H}^{-1/2}} + \norm{H_2^\tau}_{L^{4/3}_T L^2} } + \frac{1}{\sigma^{1/4}} o_{\frac{1+\chi_0}{\tau}}\pare{1}
},
\end{equation*} 
where
\begin{equation*}
\begin{aligned}
H_1^\tau & = -\cQ \bra{u^\tau\cdot\nabla \pare{ m^\tau+r^\tau + \frac{\chi_0}{1+\chi_0} \cG_F }} + \frac{1}{2}\cQ \bra{ \pare{\curl \ u^\tau} \times \pare{m^\tau+r^\tau + \frac{\chi_0}{1+\chi_0} \cG_F}  } \\
		H_2^\tau & =	- \cQ \set{ \pare{m^\tau+r^\tau + \frac{\chi_0}{1+\chi_0} \cG_F} \times \bra{ \pare{m^\tau+r^\tau + \frac{\chi_0}{1+\chi_0} \cG_F} \times \pare{-r^\tau + \frac{1}{1+\chi_0} \cG_F}} },
\end{aligned}
\end{equation*} 
which concludes the proof of \eqref{eq:conv_mr_to_0}.

\item Proof of \eqref{eq:energy_Hud_est_mr}. \\
We prove the bound for $ m^\tau $ only being the procedure for $ r^\tau $ identical. Let us multiply the equation \eqref{eq:simplified_equation_m} for $ \sqrt{-\Delta}\ m^\tau $ and let us integrate in space, integrating by parts if it may be, we deduce the energy inequality
\begin{equation*}
\frac{1}{2}\frac{\d}{\d t}\norm{ {m^\tau}}_{\Hud}^2 + \frac{1}{\tau} \norm{ {m^\tau}}_{\Hud}^2  + \sigma  \norm{ {\nabla m^\tau}}_{\Hud}^2 \leqslant \psc{F^\tau_1}{\sqrt{-\Delta}\ m^\tau}_{L^2\times L^2} + \psc{F^\tau_2}{\sqrt{-\Delta}\ m^\tau}_{L^2\times L^2}.
\end{equation*}
But indeed
\begin{align*}
\psc{F^\tau_1}{\sqrt{-\Delta}\ m^\tau}_{L^2\times L^2} & \leqslant \frac{C}{\sigma} \norm{F^\tau_1}_{\dot{H}^{-\frac{1}{2}}}^2 + \frac{\sigma}{2}\norm{\nabla m^\tau}_{\Hud}^2, \\
\psc{F^\tau_2}{\sqrt{-\Delta}\ m^\tau}_{L^2\times L^2}& \leqslant \norm{F^\tau_2}_{L^2}\norm{m^\tau}_{\dot{H}^1}, 
\end{align*}
whence we deduce
\begin{equation*}
\frac{1}{2}\frac{\d}{\d t}\norm{ {m^\tau}}_{\Hud}^2 + \frac{1}{\tau} \norm{ {m^\tau}}_{\Hud}^2  + \frac{\sigma}{2}  \norm{ {\nabla m^\tau}}_{\Hud}^2 \leqslant \frac{C}{\sigma} \norm{F^\tau_1}_{\dot{H}^{-\frac{1}{2}}}^2 + \norm{F^\tau_2}_{L^2}\norm{m^\tau}_{\dot{H}^1},
\end{equation*}
therefore integrating in time 
\begin{equation*}
\frac{1}{2}\ \norm{ {m^\tau\pare{t}}}_{\Hud}^2 + \frac{1}{\tau} \int_0^t \norm{ {m^\tau\pare{t'}}}_{\Hud}^2\d t'  + \frac{\sigma}{2} \int_0^t  \norm{ {\nabla m^\tau\pare{t'}}}_{\Hud}^2 \d t' \leqslant \frac{C}{\sigma} \norm{F^\tau_1}_{L^2_T\dot{H}^{-\frac{1}{2}}}^2 + \norm{F^\tau_2}_{L^{4/3}_T L^2}\norm{m^\tau}_{L^4_T\dot{H}^1}.
\end{equation*}
It is hence easy to deduce using Lemma \ref{lem:Sob_product_rules} that (here we denote $ U^\tau=\pare{u^\tau, m^\tau, r^\tau} $)
\begin{equation*}
\norm{F^\tau_1}_{L^2_T\dot{H}^{-\frac{1}{2}}}\leqslant \norm{U^\tau}_{\LqHu}\pare{\norm{U^\tau}_{\LqHu} + \norm{\cG_F}_{\LqHu}}, 
\end{equation*}
for the construction given in Proposition \ref{prop:existence_unique_solution} we know that $ \norm{U^\tau}_{\LqHu}\lesssim \rho $, and moreover $ \norm{\cG_F}_{\LqHu}\leqslant\varrho_0 < \rho $ by hypothesis, hence we deduce
\begin{equation*}
\norm{F^\tau_1}_{L^2_T\dot{H}^{-\frac{1}{2}}}^2\leqslant C\rho^4. 
\end{equation*}
Similar computations lead us to deduce the bound $ \norm{F^\tau_2}_{L^{4/3}_T L^2}\lesssim \rho^3 $, whence we conclude the proof of the estimate \eqref{eq:energy_Hud_est_mr}.

\item Convergence toward the limit system \eqref{eq:limit_system}. \\
Indeed under the smallness hypothesis on $ \cG_F $ and $ \bu $ stated in the point \ref{en:point_we_prove_in_the_ecistence_thm} of Proposition \ref{prop:existence_unique_solution} there exists a unique $ \bu $ bar solution of \eqref{eq:limit_system} in the space\footnote{Let us remark that if $ T=\infty $ the solution is global.} $ \cC_T\Hud\cap  \LqHu $. Let us now  select a $ \varepsilon\in\pare{0, T} $ so that
\begin{equation*}
\norm{u^{\tau}\pare{\cdot, \varepsilon} - \bu\pare{\cdot, \varepsilon}}_{\Hud}\leqslant \eta_{\varepsilon}, 
\end{equation*}
where $  \eta_\varepsilon\xrightarrow{\varepsilon\to 0}0 $ since the applications $ t\mapsto\norm{u^\tau\pare{\cdot, t}}_{\Hud} $ and $ t\mapsto\norm{\bu \pare{\cdot, t}}_{\Hud} $ are continuous. 
  Next let us denote as 
\begin{equation*}
\delta u^\tau = u^\tau -\bu, 
\end{equation*}
by the aid of \eqref{eq:Shilomis2} and \eqref{eq:limit_system} we can compute the evolution equation satisfied by $ \delta u^\tau $, i.e.
\begin{equation*}
\partial_t  \delta u^\tau  - \nu \Delta  \delta u^\tau  + \nabla\delta p^\tau = -  \delta u^\tau \cdot\nabla u^\tau + \bu\cdot\nabla  \delta u^\tau + G^\tau, 
\end{equation*}
where the outer force $ G^\tau $ is defined as 
\begin{multline}\label{eq:Gtau}
G^\tau = 
\pare{m^\tau+r^\tau + \frac{\chi_0}{1+\chi_0} \cG_F}\cdot \nabla \pare{-r^\tau + \frac{1}{1+\chi_0} \cG_F} - 
 \frac{\chi_0}{\pare{1+\chi_0}^2} \cG_F\cdot \nabla \cG_F 
 \\
 			+ \frac{1}{2}\curl \bra{\pare{m^\tau+r^\tau + \frac{\chi_0}{1+\chi_0} \cG_F} \times \pare{-r^\tau + \frac{1}{1+\chi_0} \cG_F}}. 
\end{multline}

We rely now on the following technical lemma whose proof is postponed:
\begin{lemma}\label{lem:Gtau_to_0}
The function $ G^\tau $ converges to zero as $ \tau\to 0 $ in $ L^2 \pare{\pare{\varepsilon, T} ; \dot{H}^{- \frac{1}{2} }} $. 
\end{lemma}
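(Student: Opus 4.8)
The plan is to exploit the algebraic structure of $G^\tau$. Expanding the two products in \eqref{eq:Gtau} and using the cancellation of the $\cG_F\cdot\nabla\cG_F$ contributions (one comes from the expansion of the first product, the other is subtracted by hand) together with the identity $\cG_F\times\cG_F=0$, one checks that $G^\tau$ is a finite linear combination, with coefficients depending only on $\chi_0$, of terms of the form
\begin{equation*}
v^\tau\cdot\nabla w^\tau, \qquad w^\tau\cdot\nabla v^\tau, \qquad \curl\pare{v^\tau\times w^\tau},
\end{equation*}
where $v^\tau$ always denotes a factor belonging to $\set{m^\tau, r^\tau}$ and $w^\tau$ a factor belonging to $\set{m^\tau, r^\tau, \cG_F}$. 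In other words, every surviving term of $G^\tau$ carries at least one factor among $m^\tau, r^\tau$: the self-interactions of $\cG_F$, which are the only $\cO\pare{1}$ terms, have disappeared.

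First I would upgrade the convergence \eqref{eq:conv_mr_to_0}. By \eqref{eq:energy_Hud_est_mr} the family $\pare{m^\tau, r^\tau}$ is bounded, uniformly in $\tau\in\pare{0, \tau_0}$, in $L^\infty\pare{\pare{\varepsilon, T}; \Hud}\cap L^2\pare{\pare{\varepsilon, T}; \dot{H}^{3/2}}$; combined with the pointwise-in-time interpolation inequality $\norm{v\pare{t}}_{\Hu}\leqslant \norm{v\pare{t}}_{\Hud}^{1/2}\norm{v\pare{t}}_{\dot{H}^{3/2}}^{1/2}$ (Cauchy--Schwarz on the Fourier side), this yields
\begin{equation*}
\norm{\pare{m^\tau, r^\tau}}_{L^4\pare{\pare{\varepsilon, T};\Hu}}\leqslant \norm{\pare{m^\tau, r^\tau}}_{L^\infty\pare{\pare{\varepsilon, T};\Hud}}^{1/2}\ \norm{\pare{m^\tau, r^\tau}}_{L^2\pare{\pare{\varepsilon, T};\dot{H}^{3/2}}}^{1/2}\xrightarrow{\tau\to 0}0,
\end{equation*}
since the second factor stays bounded while the first tends to zero by \eqref{eq:conv_mr_to_0}. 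On the other hand $\cG_F$ is a fixed element of $\LqHu$, and $m^\tau, r^\tau$ are bounded in $\LqHu$ uniformly in $\tau$ by Proposition \ref{prop:existence_unique_solution}.

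Then I would estimate each of the finitely many terms of $G^\tau$ in $L^2\pare{\pare{\varepsilon, T};\Hmud}$ by the Sobolev product rule. For $f, g\in\Hu\pare{\bR^3}$, Lemma \ref{lem:Sob_product_rules} gives $\norm{f\cdot\nabla g}_{\Hmud}\leqslant C\norm{f}_{\Hu}\norm{\nabla g}_{L^2}\leqslant C\norm{f}_{\Hu}\norm{g}_{\Hu}$ (product of $\Hu$ and $\dot{H}^0$, since $1+0-\tfrac{3}{2}=-\tfrac12$) and, $\curl$ being a Fourier multiplier of order one, $\norm{\curl\pare{f\times g}}_{\Hmud}\leqslant C\norm{f\times g}_{\Hud}\leqslant C\norm{f}_{\Hu}\norm{g}_{\Hu}$ (product of $\Hu$ and $\Hu$, since $1+1-\tfrac{3}{2}=\tfrac12$). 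Hence by the Cauchy--Schwarz inequality in time each model term is bounded in $L^2\pare{\pare{\varepsilon, T};\Hmud}$ by $C\norm{v^\tau}_{L^4\pare{\pare{\varepsilon, T};\Hu}}\norm{w^\tau}_{L^4\pare{\pare{\varepsilon, T};\Hu}}$, where $w^\tau$ stays bounded and $v^\tau\to 0$ in $L^4\pare{\pare{\varepsilon, T};\Hu}$ by the previous step. Summing the contributions gives $\norm{G^\tau}_{L^2\pare{\pare{\varepsilon, T};\Hmud}}\to 0$, which is the assertion. The only point requiring genuine care is the bookkeeping of the expansion of \eqref{eq:Gtau} and the verification that it is precisely the $\cG_F$--$\cG_F$ interactions that cancel; once this is granted, the nonlinear bounds are exactly of the type already established in Section \ref{sec:par_est_for_S}.
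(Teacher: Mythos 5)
Your proof is correct and follows essentially the same route as the paper: both rest on the observation that the $\cG_F\cdot\nabla\cG_F$ and $\cG_F\times\cG_F$ self-interactions cancel so that every surviving term of $G^\tau$ carries a factor $m^\tau$ or $r^\tau$, and both then combine the product rule of Lemma \ref{lem:Sob_product_rules} with the interpolation between the vanishing $L^\infty\pare{\pare{\varepsilon,T};\Hud}$ norm from \eqref{eq:conv_mr_to_0} and the uniform $L^2\pare{\pare{\varepsilon,T};\dot{H}^{3/2}}$ bound from \eqref{eq:energy_Hud_est_mr}. The only (harmless) difference is that you interpolate first to get smallness in $L^4\pare{\pare{\varepsilon,T};\dot{H}^1}$ and then apply a symmetric $L^4\times L^4$ H\"older estimate, whereas the paper distributes the $L^\infty_t\Hud$ and $L^2_t\dot H^{3/2}$ norms directly on each bilinear term.
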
 
We can hence endow the system with an appropriate initial data at time $ t=\varepsilon $ in order to deduce the following Cauchy problem satisfied by $ \delta u^\tau $:
\begin{equation}\label{eq:deltau}
\left\lbrace
\begin{aligned}
& \partial_t  \delta u^\tau  - \nu \Delta  \delta u^\tau  + \nabla\delta p^\tau = -  \delta u^\tau \cdot\nabla u^\tau + \bu\cdot\nabla  \delta u^\tau + G^\tau, & \pare{x, t}\in\bR^3\times\pare{\varepsilon, T}, \\
& \div\ \delta u^\tau =0,  & \pare{x, t}\in\bR^3\times\pare{\varepsilon, T},
\\
& \left. \delta u^\tau\right|_{t=\varepsilon}= u^{\tau}\pare{\cdot, \varepsilon} - \bu\pare{\cdot, \varepsilon}, & x\in \bR^3 .
\end{aligned}
\right.
\end{equation}
We can hence perform an $ \Hud $ energy estimate onto the system \eqref{eq:deltau} deducing the following energy inequality
\begin{equation}\label{eq:energy_ineq1}
\frac{1}{2}\frac{\d}{\d t} \norm{\delta u^\tau\pare{t}}_{\Hud}^2 + \nu \norm{\nabla\delta u^\tau\pare{t}}_{\Hud}^2 \leqslant \av{\ps{\delta u^\tau \cdot\nabla u^\tau}{\delta u^\tau}_{\Hud}} + \av{\ps{\bu\cdot\nabla  \delta u^\tau}{\delta u^\tau}_{\Hud}} + \av{\ps{G^\tau}{\delta u^\tau}_{\Hud}}. 
\end{equation}
The following bounds are moreover immediate for any $ \alpha >0 $
\begin{equation}\label{eq:bounds_energy_ineq}
\begin{aligned}
\av{\ps{G^\tau}{\delta u^\tau}_{\Hud}} & \leqslant \alpha\nu \norm{\nabla\delta u^\tau\pare{t}}_{\Hud}^2 + \frac{C}{\alpha\nu} \norm{G^\tau\pare{t}}_{\dot{H}^{-\frac{1}{2}}}^2, \\
\av{\ps{\delta u^\tau \cdot\nabla u^\tau}{\delta u^\tau}_{\Hud}} & \leqslant \alpha\nu \norm{\nabla\delta u^\tau\pare{t}}_{\Hud}^2 + \frac{C}{\alpha\nu} \norm{u^\tau\pare{t}}^4_{\dot{H}^1} \norm{\delta u^\tau\pare{t}}_{\Hud}^2, \\
\av{\ps{\bu\cdot\nabla  \delta u^\tau}{\delta u^\tau}_{\Hud}}& \leqslant \alpha\nu \norm{\nabla\delta u^\tau\pare{t}}_{\Hud}^2 + \frac{C}{\alpha\nu} \norm{\bu \pare{t}}^4_{\dot{H}^1} \norm{\delta u^\tau\pare{t}}_{\Hud}^2. 
\end{aligned}
\end{equation}
Whence selecting $ \alpha\in\pare{0, \frac{1}{8}} $,  combining the inequalities of \eqref{eq:energy_ineq1} and \eqref{eq:bounds_energy_ineq} and applying a standard Gronwall argument we deduce the following bound for any $ t\in\pare{\varepsilon, T} $
\begin{multline}\label{eq:energy_ineq2}
\norm{\delta u^\tau \pare{t}}_{\Hud}^2 + \nu \int_\varepsilon^t \norm{\nabla\delta u^\tau\pare{t'}}_{\Hud}^2
\exp\set{\int _{t'}^t \norm{u^\tau\pare{t''}}_{\Hud}^4 + \norm{\bu\pare{t''}}_{\Hud}^4 \d t''}\d t' \\
\leqslant C \eta_\varepsilon \ \exp\set{\int _{\varepsilon}^t \norm{u^\tau\pare{t'}}_{\Hud}^4 + \norm{\bu\pare{t'}}_{\Hud}^4 \d t'}\\
 + \frac{C}{\nu} \int_0^t  \norm{G^\tau\pare{t'}}_{\dot{H}^{-\frac{1}{2}}}^2 \exp\set{\int _{t'}^t \norm{u^\tau\pare{t''}}_{\Hud}^4 + \norm{\bu\pare{t''}}_{\Hud}^4 \d t''}\d t'
\end{multline}
defining hence 
\begin{equation*}
\Phi_{u^\tau, \bu }\pare{t',t}= \exp\set{\int _{t'}^t \norm{u^\tau\pare{t''}}_{\Hud}^4 + \norm{\bu\pare{t''}}_{\Hud}^4 \d t''}, 
\end{equation*}
and since $ u^\tau, \bu\in\LqHu $ we deduce that
\begin{align*}
\Phi_{u^\tau, \bu }\pare{t',t}\geqslant 1 , && \Phi_{u^\tau, \bu }\pare{t',t} \leqslant K_{u^\tau, \bu }, 
\end{align*}
whence \eqref{eq:energy_ineq2} can be rewritten in the following more compact form
\begin{equation}\label{eq:energy_ineq3}
\norm{\delta u^\tau \pare{t}}_{\Hud}^2 + \nu \int_\varepsilon^t \norm{\nabla\delta u^\tau\pare{t'}}_{\Hud}^2
\d t' \leqslant \frac{K_{u^\tau, \bu }}{\nu} \pare{\eta_\varepsilon + \norm{G^\tau}_{L^2_T\dot{H}^{-1/2}}}, 
\end{equation}
but $ \norm{G^\tau}_{L^2_T\dot{H}^{-1/2}}\xrightarrow{\tau\to 0} 0 $ thanks to the result stated in Lemma \ref{lem:Gtau_to_0}, and since $ \eta_\varepsilon\xrightarrow{\varepsilon\to 0} $ the right hand side of \eqref{eq:energy_ineq3} can be made arbitrarily small, proving hence the convergence. 
\end{enumerate}
\end{proof}

\textit{Proof of Lemma \ref{lem:Gtau_to_0}} : 
Let us remark that we can rewrite the function $ G^\tau $ as
\begin{multline*}
G^\tau = 
\pare{m^\tau+r^\tau }\cdot \nabla \pare{-r^\tau + \frac{1}{1+\chi_0} \cG_F}
-\frac{\chi_0}{1+\chi_0} {  \cG_F}\cdot \nabla {r^\tau } \\
 -\frac{1}{2}\curl \bra{\pare{m^\tau+r^\tau + \frac{\chi_0}{1+\chi_0} \cG_F} \times {r^\tau }}
 + \frac{1}{2}\curl \bra{\pare{m^\tau+r^\tau } \times \pare{-r^\tau+ \frac{1}{1+\chi_0} \cG_F }}
.
\end{multline*}
Eventually commuting derivatives on terms of the form $ \cG_F\otimes \nabla \pare{m^\tau, r^\tau} $, $ G^\tau $ can again be rewritten in the following compact form
\begin{equation*}
G^\tau = R^\tau \otimes q_1\pare{\partial} R^\tau + q_2\pare{\partial} \pare{ R^\tau \otimes R^\tau} +  R^\tau\otimes p_1\pare{\partial}\cG_F + p_2\pare{\partial}\pare{\cG_F\otimes R^\tau}, 
\end{equation*}
where $ q_1, q_2, p_1, p_2 $ are matrix-valued homogeneous Fourier multiplier of order one and $ R^\tau = m^\tau $ or $ r^\tau $. Hence using Lemma \ref{lem:Sob_product_rules} and the Sobolev interpolation inequality $ \norm{f}_{\Hu}\lesssim \norm{f}_{\Hud}^{1/2}\norm{\nabla f}_{\Hud}^{1/2} $ we deduce
\begin{align*}
\norm{R^\tau \otimes q_1\pare{\partial} R^\tau}_{L^2 \pare{\pare{\varepsilon, T} ; \dot{H}^{- \frac{1}{2} }}} & \leqslant C \norm{R^\tau}_{L^\infty \pare{\pare{\varepsilon, T} ; \dot{H}^{ \frac{1}{2} }}} \norm{\nabla R^\tau}_{L^2 \pare{\pare{\varepsilon, T} ; \Hud}}, \\
\norm{q_2\pare{\partial} \pare{ R^\tau \otimes R^\tau}}_{L^2 \pare{\pare{\varepsilon, T} ; \dot{H}^{- \frac{1}{2} }}} & \leqslant C \norm{R^\tau}_{L^\infty \pare{\pare{\varepsilon, T} ; \dot{H}^{ \frac{1}{2} }}} \norm{\nabla R^\tau}_{L^2 \pare{\pare{\varepsilon, T} ; \Hud}}, \\
\norm{R^\tau\otimes p_1\pare{\partial}\cG_F}_{L^2 \pare{\pare{\varepsilon, T} ; \dot{H}^{- \frac{1}{2} }}} & \leqslant C \norm{R^\tau}_{L^\infty \pare{\pare{\varepsilon, T} ; \dot{H}^{ \frac{1}{2} }}} \norm{\nabla \cG_F}_{L^2 \pare{\pare{\varepsilon, T} ; \Hud}}, \\
\norm{p_2\pare{\partial}\pare{\cG_F\otimes R^\tau}}_{L^2 \pare{\pare{\varepsilon, T} ; \dot{H}^{- \frac{1}{2} }}} & \leqslant C \norm{R^\tau}_{L^\infty \pare{\pare{\varepsilon, T} ; \dot{H}^{ \frac{1}{2} }}} \norm{\nabla \cG_F}_{L^2 \pare{\pare{\varepsilon, T} ; \Hud}}, 
\end{align*}
and each of the above terms converge to zero as $ \tau\to 0 $ thanks to the hypothesis assumed on $ \cG_F $, the uniform bound \eqref{eq:energy_Hud_est_mr} and the convergence result \eqref{eq:conv_mr_to_0}. 
\hfill$ \Box $

\footnotesize{
\providecommand{\bysame}{\leavevmode\hbox to3em{\hrulefill}\thinspace}
\providecommand{\MR}{\relax\ifhmode\unskip\space\fi MR }
\providecommand{\MRhref}[2]{%
  \href{http://www.ams.org/mathscinet-getitem?mr=#1}{#2}
}
\providecommand{\href}[2]{#2}

\bibliographystyle{amsplain}}

\end{document}